\documentclass[english,,reqno]{amsart}

\usepackage[english]{babel}
\usepackage[T1]{fontenc}
\usepackage{geometry}
\usepackage{verbatim}
\usepackage{amsthm}
\usepackage{amssymb}
\usepackage{setspace}
\usepackage{enumerate}
\usepackage{fancyhdr}
\usepackage{enumitem}
\usepackage{xcolor}
\usepackage{csquotes}
\usepackage{hyperref}

\usepackage[backend=biber,sorting=nyt,style=ieee]{biblatex}
\numberwithin{equation}{section}

\newcommand{\bb}[1]{\mathbb{#1}}

\theoremstyle{plain}
\newtheorem{thm}{Theorem}[section]
\newtheorem*{thm*}{Theorem}
\newtheorem{lem}[thm]{Lemma}
\newtheorem{prop}[thm]{Proposition}
\newtheorem{cor}[thm]{Corollary}

\theoremstyle{definition}
\newtheorem{defn}{Definition}[section]

\newtheorem{exam}{Example}[section]

\theoremstyle{remark}
\newtheorem*{rem}{Remark}

\begin{document}

\title{Existence and uniqueness of Remotely Almost Periodic solutions of differential equations with piecewise constant argument}

\author[Diego Jaur\'e, Christopher Maul\'en, Manuel Pinto]
{Diego Jaur\'e, Christopher Maul\'en}

\address{Diego Jaur\'e \newline
Facultad de Ingenier\'ia, Universidad de la Rep\'ublica Oriental del Uruguay}
\email{jaure.diego@gmail.com}

\address{Christopher Maul\'en \newline
Departamento de Ingeniería Matemática, Facultad de Ciencias Físicas y Matemáticas, Universidad de Concepción, Concepción, Chile.}
\email{chrismaulen@udec.cl}

\maketitle

{\centering Dedicated to the memory of Manuel Abelardo Pinto Jiménez (1952-2024).\par}

\begin{abstract}
We study differential equations with piecewise constant argument (DEPCA) and establish the existence and uniqueness of remotely almost periodic (RAP) solutions for
\[
x'(t)=A(t)x(t)+B(t)x([t])+f(t).
\]
Under an exponential dichotomy for the associated linear hybrid system \(x'(t)=A(t)x(t)+B(t)x([t])\) and suitable RAP/Lipschitz assumptions on the data, we derive sufficient conditions guaranteeing a unique RAP solution. We further consider perturbed DEPCA of the form
\[
\begin{aligned}
x'(t)&=A(t)x(t)+B(t)x([t])+f(t)+\nu\,g_{\nu}\bigl(t,x(t),x([t])\bigr),\\
y'(t)&=\tilde f\bigl(t,y(t),y([t])\bigr)+\nu\,g_{\nu}\bigl(t,y(t),y([t])\bigr),
\end{aligned}
\]
and prove the existence (and, when appropriate, uniqueness) of RAP solutions for \(\nu\) in a suitable range, under mild uniform Lipschitz and smallness conditions on \(g_{\nu}\). As an application, we obtain RAP solutions for nonautonomous Lasota-Wazewska type models with piecewise constant argument, and show the existence of a unique positive RAP solution under biologically meaningful hypotheses.
\end{abstract}

\maketitle

\section{Introduction}

The study of differential equations with piecewise constant argument (or DEPCAs, for their acronym in English) was initiated by Wiener \cite{288,289,wiener_book}, Myshkis \cite{203}, Cooke and Wiener \cite{51}, and Shah and Wiener \cite{244}.  
The study of DEPCAs is of great applied interest, since a delay provides a mathematical model for a physical or biological system in which the rate of change depends on its past history (see \cite{liming_dai,asymtotyc,cauchy,kuo,robledo}).

A differential equation with piecewise constant argument is a differential equation with a discontinuous argument of the following type:  
\begin{eqnarray}
x'(t) = f(t, x(t), x([t])), \label{investigarnolinear}
\end{eqnarray}  
where $f: \mathbb{R} \times \mathbb{R}^q \times \mathbb{R}^q \rightarrow \mathbb{R}^q$ is continuous on $\mathbb{R}^q$, and $[\cdot]$ denotes the floor (integer part) function. Note that the equation is non-autonomous.  

DEPCAs are hybrid differential equations, that is, they exhibit the structure of continuous dynamical systems over the intervals $]n, n+1[,\ n \in \mathbb{Z}$, and of discrete dynamical systems over $\mathbb{Z}$, combining the properties of both differential and difference equations.

In what follows, we denote by $|\cdot|$ the Euclidean norm.

The following definition is the natural extension of the concept of solution adapted to a DEPCA.

\begin{defn}\label{def_sol_DEPCA}
A function $x:\mathbb{R}\rightarrow\mathbb{R}^{q}$ is a solution of \eqref{investigarnolinear} if the following conditions are satisfied:  
\begin{enumerate}
\item[$(i)$] $x$ is continuous on $\mathbb{R}$;
\item[$(ii)$] the derivative $x'$ of $x$ exists on $\mathbb{R}$  
except possibly at points $t=n$, $n\in\mathbb{Z}$, where the one-sided derivatives exist;
\item[$(iii)$] $x$ satisfies \eqref{investigarnolinear}  
on the interval $(n, n+1)$, for all $n\in\mathbb{Z}$.
\end{enumerate}
\end{defn}

The main objective of this paper is to establish the existence and uniqueness of \textit{Remotely Almost Periodic (RAP)} solutions for DEPCAs, specifically we consider the equations
\begin{eqnarray}
z'(t)&=& A(t)z(t) \label{DEPCA_A}\\
y'(t)&=& A(t)y(t)+B(t)y([t]) \label{DEPCA_lineal}\\
x'(t)&=& A(t)x(t)+B(t)x([t])+f(t), \label{DEPCA_lineal+f}
\end{eqnarray}
where $A, B:\mathbb{R}\rightarrow M_{q\times q}(\mathbb{R})$ are continuous matrices and $f:\mathbb{R}\rightarrow \mathbb{R}^q$ is a bounded uniformly continuous function. In \cite{zhang_depca}, the authors seek remotely almost periodic solutions for \eqref{DEPCA_lineal+f} with constant matrices $A$ and $B$.  Additionally, in \cite{yuan}, the existence of almost periodic solutions for \eqref{DEPCA_lineal+f} is studied.  
In this article, we will address the case where $A$ and $B$ are remotely almost periodic matrices.

It is well known that Sarason defined the notion of a scalar-valued remotely almost periodic function in \cite{sarason}. The class of vector-valued remotely almost periodic functions defined on $\mathbb{R}^n$ was introduced by Yang and Zhang in \cite{yang_zhang_RAP}, where the authors have provided several applications in the study of existence and uniqueness of remotely almost periodic solutions for parabolic boundary value problems (for some results about parabolic boundary value problems, one may refer to \cite{kamenskii_2020,kamenskii_2021,li_yang} and references cited therein). In Propositions 2.4--2.6 in \cite{xu_li_guo}, the authors have examined the existence and uniqueness of remotely almost periodic solutions of multidimensional heat equations, while the main results of Section 3 are concerned with the existence and uniqueness of remotely almost periodic type solutions of certain types of parabolic boundary value problems (see also \cite{yang_zhang_cauchy,yang_zhang_slowly}, where the authors have investigated almost periodic type solutions and slowly oscillating type solutions for various classes of parabolic Cauchy inverse problems). Concerning applications of remotely almost periodic functions, research articles \cite{zhang_piao} by Zhang and Piao, where the authors have investigated the time remotely almost periodic viscosity solutions of Hamilton--Jacobi equations, and \cite{zhang_depca} by Zhang and Jiang, where the authors have investigated remotely almost periodic solutions for a class of systems of differential equations with piecewise constant argument, should be mentioned; see \cite{wiener_book} and the research articles \cite{chiu_pinto_periodic}, 
\cite{coronel_dichotomies}, \cite{kostic_velinov}, 
 and \cite{torres_castillo} for more details about the subject.

To better understand the space of Remotely Almost Periodic functions, denoted by $RAP({\mathbb R } : {\mathbb C }^{q})$, we will recall the notion of a Slowly Oscilating function (the corresponding space is denoted by $SO({\mathbb R } : {\mathbb C }^{q})$ henceforth):
A uniformly continuous function $f:\mathbb{R}\rightarrow \mathbb{C}^q$ is called {\it slowly oscillating} if and only if for every $a\in {\mathbb R}$ we have that
\[
\lim_{|t|\rightarrow +\infty}\| f(t+a)-f(t)\|=0,
\]
where $\|\cdot  \|$ be a fixed norm in $\mathbb C^q$.

Now, we recall the notion of a remotely almost periodic function:

\begin{defn}[\cite{pinto_remotely, zhang_depca}]\label{rap}
A uniformly continuous function $f:\mathbb{R}\rightarrow \mathbb{C}^q$ is called \textit{Remotely Almost Periodic} (RAP, for short) if for every $\epsilon>0$ we have that the set
$$
T(f,\epsilon):=\Biggl\{ \tau \in {\mathbb R}: \limsup_{|t|\rightarrow +\infty}\| f(t+\tau)-f(t)\|<\epsilon\Biggr\}
$$
is relatively dense in $ {\mathbb R}.$  We denote by $RAP (\mathbb R\ : \ \mathbb C^q)$ the set of all such functions.
\end{defn}

Any number $\tau \in T(f,\epsilon)$ is called a remote $\epsilon$-translation number of $f$. We know that $RAP({\mathbb R } : {\mathbb C }^{q})$ is a closed subspace of 
$BUC({\mathbb R } : {\mathbb C }^{q})$ and therefore the Banach space itself. 		
If the functions $F_{1}, \cdot \cdot \cdot, F_{k}$ are remotely almost periodic ($k\in {\mathbb N}$), then for each $\epsilon>0$ the set of their common
remote $\epsilon$-translation number is relatively dense in ${\mathbb R};$ see e.g., \cite[Proposition 2.3]{zhang_parabolic}.

Our first main result concerns the linear non-homogeneous equation \eqref{DEPCA_lineal+f}.

\begin{thm}\label{existencia_solucion_depca_rap}
Suppose that $A$ and $B$ are remotely almost periodic matrix valued functions, and $f \in \mathbb{Z}RAP(\mathbb{R}, \mathbb{R}^q)$.  
Assume that the discrete equation associated with \eqref{DEPCA_lineal} admits an exponential dichotomy on $\mathbb{Z}$ and that its Green's kernel is bi-remotely almost periodic and summable.  
Then, equation \eqref{DEPCA_lineal+f} has a unique remotely almost periodic solution.
\end{thm}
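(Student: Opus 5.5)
The plan is to reduce the problem to the discrete equation on $\mathbb{Z}$, solve it there with the Green's kernel, and rebuild the continuous solution with the variation of constants formula on each interval $[n,n+1]$.

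\textbf{Step 1: reduction to a difference equation.} Let $\Phi(t,s)$ be the fundamental matrix of \eqref{DEPCA_A}. For $t\in[n,n+1)$, any solution of \eqref{DEPCA_lineal+f} satisfies
\[
x(t)=\Phi(t,n)x(n)+\int_n^t\Phi(t,s)B(s)\,ds\,x(n)+\int_n^t\Phi(t,s)f(s)\,ds .
\]
Set
\[
C(n)=\Phi(n+1,n)+\int_n^{n+1}\Phi(n+1,s)B(s)\,ds,\qquad h(n)=\int_n^{n+1}\Phi(n+1,s)f(s)\,ds .
\]
Letting $t\to n+1$, the values $x_n=x(n)$ solve $x_{n+1}=C(n)x_n+h(n)$. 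This is the discrete equation associated with \eqref{DEPCA_lineal}. Conversely, every solution of this difference equation extends by the formula above to a solution of \eqref{DEPCA_lineal+f} in the sense of Definition \ref{def_sol_DEPCA}. So the two solution sets correspond bijectively.

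\textbf{Step 2: solving the discrete equation.} By the exponential dichotomy, the only bounded solution of the difference equation is $x_n=\sum_{k\in\mathbb{Z}}G(n,k+1)h(k)$, where $G$ is the Green's kernel. The series converges because $G$ is summable, e.g.\ $\sup_n\sum_k|G(n,k)|<\infty$. We need $h$ to be remotely almost periodic on $\mathbb{Z}$.
\begin{itemize}
\item Take $\tau$ to be a common integer remote $\epsilon$-translation number of $A$, $B$ and $f$. The class $\mathbb{Z}RAP$ is meant to supply such integer translates.
\item Since $A$ is remotely almost periodic, Gronwall's inequality on intervals of length at most one gives $\limsup_{|n|\to\infty}\sup_{t,s\in[n,n+1]}|\Phi(t+\tau,s+\tau)-\Phi(t,s)|\le c\,\epsilon$.
\item Hence $C$ and $h$ are remotely almost periodic sequences.
\end{itemize}
To compare $x_{n+\tau}$ with $x_n$, write
\[
x_{n+\tau}-x_n=\sum_k\bigl[G(n+\tau,k+\tau+1)-G(n,k+1)\bigr]h(k+\tau)+\sum_k G(n,k+1)\bigl[h(k+\tau)-h(k)\bigr].
\]
The first sum is small as $|n|\to\infty$ by the bi-remote almost periodicity of $G$. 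The second needs care:
\begin{itemize}
\item Split it into $|k-n|\le N$ and $|k-n|>N$.
\item The tail $|k-n|>N$ is controlled by the exponential decay $|G(n,k)|\le Ke^{-\alpha|n-k|}$, giving a bound $\le 2K\|h\|_\infty e^{-\alpha N}/(1-e^{-\alpha})$.
\item On the near part $|k-n|\le N$, $|k|\to\infty$ as $|n|\to\infty$, so $\limsup|h(k+\tau)-h(k)|\le c\epsilon$.
\end{itemize}
This shows $\{x_n\}$ is remotely almost periodic.

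\textbf{Step 3: the continuous solution.} Define $x(t)$ on $[n,n+1)$ by the formula in Step 1. Continuity at the integers holds by construction, and $x$ is bounded. It is uniformly continuous because $x'$ is bounded on each open interval, uniformly in $n$. For $t\in[n,n+1)$ and integer $\tau$, the difference $x(t+\tau)-x(t)$ is bounded by the terms $|\Phi(t+\tau,n+\tau)-\Phi(t,n)|$, by $|x_{n+\tau}-x_n|$, and by the integrals of $B$ and $f$ differences. Each has $\limsup$ of order $\epsilon$. Hence $x\in RAP$.

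\textbf{Uniqueness.} The difference of two remotely almost periodic solutions is a bounded solution of \eqref{DEPCA_lineal}. Its values on $\mathbb{Z}$ form a bounded solution of the homogeneous difference equation, so by the dichotomy they vanish. Then the extension formula forces the difference to be zero on all of $\mathbb{R}$.

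The main obstacle is the translation argument on the integers. Translation numbers from Definition \ref{rap} are real, while the discrete structure needs integer shifts. The hypothesis $f\in\mathbb{Z}RAP$ and the bi-remote almost periodicity of $G$ are presumably designed to supply relatively dense sets of common integer remote translation numbers. I would first attempt the near/far splitting above, and if needed use the standard fact that almost periodic objects admit integer $\epsilon$-translations via a Bochner-type approximation.
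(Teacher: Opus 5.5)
Your plan is essentially the paper's proof. You reduce to $x(n+1)=C(n)x(n)+h(n)$ and get $C$ and $h$ remotely almost periodic from the local bi-remote almost periodicity of $\Phi$ (the paper cites \cite[Theorem 2]{pinto_remotely}, you use Gronwall). You then obtain the unique bounded, hence remotely almost periodic, discrete solution from the Green's kernel as in Theorem \ref{difference_sol_bounded}; your split over $|k-n|\le N$ is a tidier version of the paper's split over $|k|\le n_1$. Finally you rebuild $x$ on each $[n,n+1)$ and conclude remote almost periodicity from integer shifts plus the Lipschitz bound of Lemma \ref{sol_DEPCA_uniform_continua}. The integer-translation issue you flag is handled in the paper by Lemma \ref{RAP_TcapZ_novacio}. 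Like you, the paper then simply intersects the integer translation sets of $A$, $B$, $f$, $\tilde G$ (and $y$), without separately proving that their common elements are relatively dense.
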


Here $\mathbb{Z}RAP(\mathbb{R}, \mathbb{R}^q)$ denotes the space of $\mathbb{Z}$-remotely almost periodic functions, which captures the appropriate regularity for functions involving the piecewise constant argument $[t]$; see Definition \ref{def_ZRAP}.

We then consider perturbed systems of the form
\begin{eqnarray}
y'(t) &=& A(t)y(t) + B(t)y([t]) + f(t) + g_{\nu}(t, y(t), y([t])), \label{intro_perturbed}
\end{eqnarray}
where $g_{\nu}$ is a perturbation depending on a small parameter $\nu \geq 0$ with $g_0 \equiv 0$.

\begin{thm}\label{teo_perturbada_depca}
Let $\xi$ be the unique remotely almost periodic solution of \eqref{DEPCA_lineal+f}.  
If conditions \ref{H1} and \ref{H2} are satisfied,  
then there exists $r > 0$ and $\nu_0 > 0$ sufficiently small such that \eqref{intro_perturbed}  
has a unique remotely almost periodic solution $\psi_\nu$ in an $r$-neighborhood of $\xi$  
for each $\nu \in [0, \nu_0)$.  
Moreover, if $g_\nu(t, x, y)$ is uniformly continuous in $(t, x, y) \in \mathbb{R} \times B_r(0) \times B_r(0)$,  
 $\psi_\nu$ depends continuously on $\nu$ and  
\[
\lim_{\nu \to 0} \psi_\nu(t) = \xi(t), \quad \mbox{for all}~{} t\in\mathbb{R}.
\]
\end{thm}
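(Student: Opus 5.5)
The plan is a contraction argument on a closed ball of $RAP(\mathbb{R}:\mathbb{R}^q)$, built on the solution operator of Theorem \ref{existencia_solucion_depca_rap}. Since $A$, $B$ and the dichotomy hypotheses are those of Theorem \ref{existencia_solucion_depca_rap}, for every $h\in\mathbb{Z}RAP(\mathbb{R},\mathbb{R}^q)$ equation \eqref{DEPCA_lineal+f} with $f$ replaced by $h$ has a unique RAP solution $\mathcal{L}h$. I expect this solution to be given by the Green-type representation through the summable bi-RAP kernel. Summability of the Green kernel should then give a bound $\|\mathcal{L}h\|_\infty\le K\|h\|_\infty$ with $K$ independent of $h$. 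I will take \ref{H1} to encode that for $\phi$ in a ball $\overline{B}_r(\xi)\subset RAP$ the map $t\mapsto g_\nu(t,\phi(t),\phi([t]))$ lies in $\mathbb{Z}RAP$. I will take \ref{H2} to encode uniform Lipschitz and smallness conditions: $|g_\nu(t,x_1,y_1)-g_\nu(t,x_2,y_2)|\le L(\nu)(|x_1-x_2|+|y_1-y_2|)$ and $\sup_t|g_\nu(t,\xi(t),\xi([t]))|\le M(\nu)$, with $L(\nu),M(\nu)\to0$ as $\nu\to0$ (using $g_0\equiv0$).

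\textbf{Step 1: the fixed-point map.} Write $y=\xi+z$. Then $y$ solves \eqref{intro_perturbed} iff $z$ solves $z'=A z+B z([t])+g_\nu(t,\xi+z,\xi([t])+z([t]))$. Define $T_\nu z=\mathcal{L}\bigl(g_\nu(\cdot,\xi+z,\xi([\cdot])+z([\cdot]))\bigr)$ on $\overline{B}_r(0)\subset RAP$. The main obstacle is showing that $T_\nu$ maps RAP into RAP, i.e. that the Nemytskii composition with a RAP $z$ and with $z([t])$ produces a $\mathbb{Z}RAP$ forcing. For this I will take common remote $\epsilon$-translation numbers and approximate them by integers, since $[t+\tau]=[t]+\tau$ exactly when $\tau\in\mathbb{Z}$. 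The uniform continuity of $z$ handles the small non-integer part; the point where this fails to be exact is at the jumps of $[t]$, and this is precisely what $\mathbb{Z}RAP$ accommodates. I will verify this using Definition \ref{def_ZRAP} together with \ref{H1}, which should be formulated exactly to make this step go through.

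\textbf{Step 2: contraction.} I will check $\|T_\nu z\|\le K(M(\nu)+2L(\nu)r)\le r$ and $\|T_\nu z_1-T_\nu z_2\|\le 2KL(\nu)\|z_1-z_2\|$. Fixing $r$, I will then choose $\nu_0$ so that $2KL(\nu)<1/2$ and $KM(\nu)<r/2$ for $\nu<\nu_0$. Banach's theorem then gives a unique fixed point $z_\nu$, and $\psi_\nu=\xi+z_\nu$ is the unique RAP solution in the $r$-neighborhood.

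\textbf{Step 3: continuity in $\nu$.} Estimate $\psi_\nu-\psi_\mu=T_\nu z_\nu-T_\mu z_\mu$ by adding and subtracting $T_\nu z_\mu$. This gives
\[
\|z_\nu-z_\mu\|\le 2\bigl\|\mathcal{L}\bigl(g_\nu-g_\mu\bigr)(\cdot,\xi+z_\mu,\dots)\bigr\|\le 2K\sup_{t,|x|,|y|\le r'}|g_\nu-g_\mu|,
\]
where $r'=r+\|\xi\|_\infty$ (I will state the uniform continuity on the corresponding ball). Uniform continuity of $g_\nu(t,x,y)$ in all variables, including the parameter $\nu$, makes the right-hand side tend to $0$ as $\mu\to\nu$. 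For $\nu\to0$, $g_0\equiv0$ gives $\psi_0=\xi$, so $\psi_\nu\to\xi$ uniformly on $\mathbb{R}$, and in particular pointwise.
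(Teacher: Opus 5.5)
Your proposal is correct and is essentially the paper's argument. You shift by $\xi$, let $T_\nu z$ be the unique RAP solution (Theorem \ref{existencia_solucion_depca_rap}) of the linear DEPCA forced by $g_\nu(t,\xi(t)+z(t),\xi([t])+z([t]))$, and get a self-map and a contraction on the $r$-ball from $\|g_\nu\|_{\tilde r}\to0$ and $M_0(\tilde r,\nu)\to0$, with $\tilde r=r+\|\xi\|_\infty$. Your constant $K$ is what the paper extracts from the discrete dichotomy bound and the local variation-of-parameters formula on $[n,n+1)$.

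Your estimate $\|z_\nu-z_\mu\|_\infty\le 2\|T_\nu z_\mu-T_\mu z_\mu\|_\infty$ is a cleaner route to the $\nu$-dependence than the paper's pointwise argument. At $\mu=0$ it gives $\psi_\nu\to\xi$ uniformly using only \ref{H2}; for $\mu\neq0$ it relies on your reading that the uniform continuity also holds in $\nu$.

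The one step to tighten is Step 1. The $\mathbb{Z}RAP$ property of the forcing follows from \ref{H2}, not \ref{H1}, via the composition lemma of Section \ref{sec2}. The integer translation numbers should come directly from Lemma \ref{RAP_TcapZ_novacio}: rounding an arbitrary real remote translation number to the nearest integer leaves an error of up to $1/2$, which uniform continuity cannot absorb.
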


We also extend these results to fully nonlinear equations of the form $y'(t) = \tilde{f}(t, y(t), y([t])) + g_{\nu}(t, y(t), y([t]))$ (see Theorem \ref{perturbed_f_nolineal}).

\subsection{Organization and main ideas of this work} 
In Section \ref{sec2}, we recall some necessary facts about remotely almost periodic functions, and introduce the notion of $\mathbb{Z}$-remotely almost periodic functions (see Definition \ref{def_ZRAP}), which is essential for handling the discontinuities inherent in DEPCAs.

Section \ref{sec3} and Section \ref{sec4} are devoted to the general theory where we derive the variation of parameters formula \eqref{DEPCA_var_para_local} and establish the connection between solutions of \eqref{DEPCA_lineal+f} and the associated difference equation \eqref{lineal+h_discreta}. We also introduce the notion of ($\alpha$, K, P)-exponential dichotomy for the discrete system and prove Theorem \ref{difference_sol_bounded}, which guarantees the existence and uniqueness of remotely almost periodic solutions for non-homogeneous difference equations under appropriate conditions on the Green's kernel.

In Section \ref{sec5}, we present one of our main results, Theorem \ref{existencia_solucion_depca_rap}, where we establish that if the matrices $A(t)$ and $B(t)$ are remotely almost periodic, $f(t)$ is $\mathbb{Z}$-remotely almost periodic, and the associated discrete equation admits an exponential dichotomy with a bi-remotely almost periodic and summable Green's kernel, then equation \eqref{DEPCA_lineal+f} has a unique remotely almost periodic solution. 

Section \ref{sec6} extends these results to perturbed systems of the form \eqref{depca_lineal+f_perturbada} and \eqref{depca_nolineal_perturbada}; the main result here is Theorem \ref{teo_perturbada_depca}, which shows that for sufficiently small $\nu$, the perturbed system has a unique remotely almost periodic solution in an $r$-neighborhood of the unperturbed solution, with continuity in the parameter $\nu$
. We also treat the case of nonlinear unperturbed equations in Theorem \ref{perturbed_f_nolineal}. Finally, in Section \ref{sec7}, as an application, we study the Lasota-Wazewska model with piecewise constant argument \eqref{application_depca}, which arises in the study of red blood cell dynamics. The main result of this section is Theorem \ref{gamma_small_unique_sol}, where we prove that under conditions:

\begin{enumerate}[label=\textbf{(LW.\arabic*)}]
\item\label{ILW1} The average of $\delta$ satisfies $M(\delta) > \delta_{-} > 0$.
\item\label{ILW2} The functions $\delta(\cdot)$, $p(\cdot)$ are positive remotely almost-periodic functions,  
and $f(\cdot)$ is a positive function that satisfies a $\gamma$-Lipschitz condition.
\end{enumerate}
Then for a sufficiently small $\gamma$, the model admits a unique positive remotely almost periodic solution.

We use the standard notation throughout the paper. By $BUC({\mathbb R } : {\mathbb C }^{q})$ we denote the Banach space of bounded and uniformly continuous functions $f : {\mathbb R } \rightarrow {\mathbb C }^{q},$ 
equipped with the sup-norm $\| \cdot \|_{\infty};$ let $\| \cdot \|$ be a fixed norm in ${\mathbb C}^{q}.$ We set ${\mathbb N}_{n}:=\{1,\cdot \cdot \cdot, n\}.$

\section{Preliminars on Remotely Almost Periodic Sequences}\label{sec2}

Analogously to remotely almost periodic functions, we will make a brief analysis of remotely almost periodic sequences. Throughout this paper, we use the variable $t$ for continuous time ($t \in \mathbb{R}$) and $n$ for discrete time ($n \in \mathbb{Z}$). When a function $f : \mathbb{R} \to \mathbb{R}^m$ appears as $f(n)$, it is understood that we consider the sequence $\{f(n)\}_{n \in \mathbb{Z}}$ obtained by restricting $f$ to the integers.

\begin{defn}
Let $x:\mathbb{Z}\rightarrow\mathbb{R}^{m}$ be a bounded sequence. We say that $x$ is:
\begin{enumerate}[label=(\roman*)]
\item \textbf{Slowly Oscillating} if for each $a\in\mathbb{Z}$,  
\[
\lim_{\left|n\right|\rightarrow\infty}\left\Vert x\left(n+a\right)-x\left(n\right)\right\Vert=0.
\]  
We denote the set of such sequences by $\mathcal{SO}\left(\mathbb{Z},\mathbb{R}^{m}\right)$.

\item \textbf{Almost Periodic} if for every $\epsilon > 0$, the set  
\[
T\left(x,\epsilon\right)=\left\{ \tau\in\mathbb{Z} \,\middle\vert\, \left\Vert x\left(n+\tau\right)-x\left(n\right)\right\Vert <\epsilon,\ \forall n\in \mathbb{Z} \right\}
\]  
is relatively dense in $\mathbb{Z}$. The number $\tau\in T\left(x,\epsilon\right)$ is called an \textbf{$\epsilon$-almost period}. We denote the set of such sequences by $AP\left(\mathbb{Z},\mathbb{R}^{m}\right)$.

\item \textbf{Remotely Almost Periodic} if for every $\epsilon > 0$, the set  
\[
T\left(x,\epsilon\right)=\left\{ \tau\in\mathbb{Z} \,\middle|\, \limsup_{\left|n\right|\rightarrow\infty}\left\Vert x\left(n+\tau\right)-x\left(n\right)\right\Vert<\epsilon \right\}
\]  
is relatively dense in $\mathbb{Z}$. The number $\tau\in T\left(x,\epsilon\right)$ is called a \textbf{remote $\epsilon$-translation number}. We denote the set of such sequences by $RAP\left(\mathbb{Z},\mathbb{R}^{m}\right)$.
\end{enumerate}
\end{defn}

An example of a sequence $x\in RAP(\mathbb{Z},\mathbb{R})$ is  
\begin{eqnarray*}
x\left(n\right)=\cos\left(n\right)+\cos\left(\sqrt{2}n\right)+\sin(n+\sqrt{\vert n\vert })+\frac{3n^{2}}{n^{2}+1}.
\end{eqnarray*}   

We now state the following theorem to relate a remotely almost periodic sequence to a remotely almost periodic function.

\begin{thm}\label{RAP_cont_RAP_disc}
A necessary and sufficient condition for a bounded sequence $\{u_{n}\}_{n\in\mathbb{Z}}$ to be remotely almost periodic is the existence of a function $f\in RAP\left(\mathbb{R},\mathbb{R}^{m}\right)$ such that $f(n)=u_{n},\, n\in\mathbb{Z}$.
\end{thm}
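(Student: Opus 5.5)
Both directions reduce to comparing remote translation numbers of the sequence with those of a continuous interpolant. The only delicate point is that continuous remote translation numbers need not be integers.

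\emph{Sufficiency.} Let $f\in RAP(\mathbb{R},\mathbb{R}^m)$ with $f(n)=u_n$, and fix $\epsilon>0$. Since $f$ is uniformly continuous, there is $\delta\in(0,1/2)$ such that $|s-s'|<\delta$ implies $\|f(s)-f(s')\|<\epsilon/2$. The set $T(f,\epsilon/2)$ is relatively dense with some inclusion length $\ell$. The goal is to produce integer translation numbers, so I will not round a single $\tau\in T(f,\epsilon/2)$ directly. Instead I work on the torus: consider the common remote translation numbers of $f$ and of the $1$-periodic function $e^{2\pi i t}$. Periodic functions are RAP, and by \cite[Proposition 2.3]{zhang_parabolic} the common remote $\eta$-translation numbers are relatively dense for every $\eta>0$. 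Choosing $\eta=\min\{\epsilon/2,\,|e^{2\pi i\delta}-1|\}$, every common $\tau$ satisfies $\operatorname{dist}(\tau,\mathbb{Z})<\delta$, because $e^{2\pi i t}$ is periodic and so its $\limsup$ condition is a pointwise one. Let $k$ be the nearest integer to $\tau$. Then
\[
\|u_{n+k}-u_n\|\le \|f(n+k)-f(n+\tau)\|+\|f(n+\tau)-f(n)\|.
\]
The first term is less than $\epsilon/2$ for all $n$, and the second has $\limsup<\epsilon/2$ as $|n|\to\infty$. So $k$ is a remote $\epsilon$-translation number of $u$. The integers $k$ obtained this way are relatively dense in $\mathbb{Z}$, since they are within $1/2$ of a relatively dense set of reals.

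\emph{Necessity.} Given $u\in RAP(\mathbb{Z},\mathbb{R}^m)$, define $f$ as the piecewise linear interpolant
\[
f(t)=u_{[t]}+(t-[t])\bigl(u_{[t]+1}-u_{[t]}\bigr).
\]
Then $f$ is bounded with $\|f\|_\infty\le\sup_n\|u_n\|$. It is also Lipschitz with constant $2\sup_n\|u_n\|$, hence uniformly continuous, and $f(n)=u_n$. Now fix $\epsilon>0$ and take $k\in T(u,\epsilon)\subset\mathbb{Z}$. Because $k$ is an integer, $[t+k]=[t]+k$, and therefore
\[
f(t+k)-f(t)=(1-\theta)\bigl(u_{[t]+k}-u_{[t]}\bigr)+\theta\bigl(u_{[t]+1+k}-u_{[t]+1}\bigr),\qquad \theta=t-[t].
\]
This is a convex combination of two differences whose norms have $\limsup<\epsilon$ as $|t|\to\infty$, so $\limsup_{|t|\to\infty}\|f(t+k)-f(t)\|<\epsilon$. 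Hence $T(f,\epsilon)\supset T(u,\epsilon)$. The set $T(u,\epsilon)$ is relatively dense in $\mathbb{Z}$, and a relatively dense subset of $\mathbb{Z}$ is relatively dense in $\mathbb{R}$, so $f\in RAP$.

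The main obstacle is the non-integrality of continuous translation numbers in the sufficiency direction. Rounding each $\tau$ separately fails, because a fractional shift changes $f$ by an amount governed only by the modulus of continuity. Adding $e^{2\pi i t}$ as a companion RAP function and taking common translation numbers resolves this.
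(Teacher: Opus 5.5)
Your proof is correct. The necessity direction is the paper's own piecewise-linear interpolation; your convex-combination identity gives the cleaner inclusion $T(u,\epsilon)\subset T(f,\epsilon)$, where the paper states $T(u_n,\epsilon/3)\subset T(f,\epsilon)$.

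The sufficiency direction takes a genuinely different route. The paper uses the structure theorem that $RAP$ is the closed subalgebra of $BUC$ generated by $AP$ and $SO$, and approximates $f$ uniformly by $g_1+\varphi_1+g_2\varphi_2$ with $g_i\in AP$ and $\varphi_i\in SO$. It then uses the classical fact that the restrictions $g_i|_{\mathbb{Z}}$ are almost periodic sequences to pick a common \emph{integer} almost period $\tau$. Finally it disposes of the slowly oscillating terms because $\varphi_i(n+\tau)-\varphi_i(n)\to 0$ for fixed $\tau$.

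You instead work directly from the definition. Pairing $f$ with the $1$-periodic function $e^{2\pi i t}$ forces common remote translation numbers to lie within $\delta$ of $\mathbb{Z}$, and uniform continuity absorbs the rounding error. This is more elementary and self-contained: it needs only uniform continuity and the common-translation-number property already quoted in the introduction. It avoids the (vector-valued) structure theorem. It also avoids the paper's special-form approximant $g_1+\varphi_1+g_2\varphi_2$; a general element of the generated algebra is a finite sum $\sum_i g_i\varphi_i$, though the paper's argument adapts to that.

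Applying your same estimate at real $t$ shows that the rounded $k$ lies in $T(f,\epsilon)\cap\mathbb{Z}$. That is exactly Lemma \ref{RAP_TcapZ_novacio}, obtained without a second decomposition argument.

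The paper's route, by contrast, reduces everything to known discrete almost periodic theory. There the integrality issue is hidden inside the cited result on $AP$ sequences, which is classically proved by the same Bohr-type trick you used.
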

\begin{proof}
Suppose that $f\in RAP(\mathbb{R},\mathbb{R}^{m})$.  
 
As $RAP(\mathbb{R},\mathbb{R}^{m})$ is a closed subalgebra of $BUC(\mathbb{R},\mathbb{R}^n)$ generated by $AP(\mathbb{R},\mathbb{R}^{m})$ and $SO(\mathbb{R},\mathbb{R}^{m})$ we know that given $\epsilon>0$, there exist $g_{1},\, g_{2}\in AP\left(\mathbb{R},\mathbb{R}^{m}\right)$  
and $\varphi_{1},\,\varphi_{2}\in \mathcal{SO}\left(\mathbb{R},\mathbb{R}^{m}\right)$  
such that  
\[
\left\Vert f-\left[g_{1}+\varphi_{1}+g_{2}\varphi_{2}\right]\right\Vert_{\infty} <\epsilon/4.
\]  
By Theorem 1.7.3 in \cite{zhang_APTE}, $g_1(n), g_2(n)\in AP(\mathbb{Z},\mathbb{R}^{m})$.

Let $\epsilon'=\min\left\{ \epsilon/4,\epsilon/\left(4\left\Vert \varphi_{2}\right\Vert_{\infty}\right)\right\}$  
and let $\tau$ be an $\epsilon'$-translation number for $g_{1}(n)$ and $g_{2}(n)$.  
We will show that if $\tau$ is an $\epsilon'/2$-remote translation number  
for $g_{1}(n)+\varphi_{1}(n)+g_{2}(n)\varphi_{2}(n)$,  
then it is a remote $\epsilon$-translation number for $f(n)=u_{n}$. Since  
\[
\Vert f(n+\tau)-f(n)\Vert \leq \epsilon /2 +(1+\Vert \varphi_{2}\Vert_{\infty} )\epsilon' +\Vert \Delta_{\tau}\varphi_{1}(n)\Vert +\Vert g_{2}\Vert_{\infty} \Vert \Delta_{\tau}\varphi_{2}(n)\Vert,
\]  
it follows that  
\[
\limsup_{\left|n\right|\rightarrow\infty}\Vert f(n+\tau)-f(n)\Vert<\epsilon.
\]  
This shows that $T\left(f(n),\epsilon\right)$ is relatively dense in $\mathbb{Z}$ and $f(n)$ is remotely almost periodic.

Now for the converse, assume that $u_n$ is a remotely almost periodic sequence.  
Define the function $f:\mathbb{R} \to \mathbb{R}^m$ by  
\[
f(t) = u_n + (t - n)(u_{n+1} - u_n),\quad\text{for } n \leq t < n+1,\; n\in\mathbb{Z}.
\]  
It is clear that $f(n) = u_n$. We will show that $f \in RAP(\mathbb{R},\mathbb{R}^m)$;  
we only need to show that $T(u_n, \epsilon/3) \subset T(f, \epsilon)$.  
Let $\tau \in T(u_n, \epsilon/3)$. If $n \leq t < n+1$, then $n+\tau \leq t+\tau < n+\tau+1$.  
Since  
\[
f(t+\tau)-f(t)= f(n+\tau) - f(n) + (t - n)\left\{ [f(n+\tau+1) - f(n+1)] - [f(n+\tau) - f(n)] \right\},
\]
and $0 \leq t - n < 1$, we obtain  
\[
\limsup_{\left|t\right|\rightarrow\infty} \Vert f(t+\tau) - f(t) \Vert < \epsilon.
\]  
Thus, $\tau \in T(f,\epsilon)$, which completes the proof.
\end{proof}

Given the continuous function $f$, it is evident that the function $f([\cdot])$ is discontinuous (unless $f$ is a constant function).  
From this, we observe that for the function $\varphi \in RAP(\mathbb{R},\mathbb{R})$, the function $\varphi([\cdot])$ is not remotely almost periodic, since it does not satisfy uniform continuity.  
As naturally expected, the discontinuities occur at the integers; furthermore, it is clear that the lateral limits exist for every $n \in \mathbb{Z}$.  
In \cite{zhang_depca}, there is no precise definition of what is meant by remotely almost periodicity of the possibly discontinuous function $f(\cdot, x(\cdot), x([\cdot]))$, which leads to some inaccuracies.  

Analogous to the concept introduced in the context of almost periodic and almost automorphic functions by E. Ait Dads and L. Lachimi \cite{lachimi}, and A. Ch\'{a}vez, S. Castillo, and M. Pinto \cite{alan}, respectively,  
we will later see that it is sufficient to consider \textbf{$\mathbb{Z}$ Remotely Almost Periodic} functions to study solutions of DEPCAs.

We define a subset of the bounded functions as  

\begin{eqnarray*}
BR(\mathbb{R},\mathbb{R}^{n}) :=
\left\{
 f:\mathbb{R} \rightarrow \mathbb{R}^{n}  \left| 
 \begin{aligned}
\quad f \text{ is bounded, continuous on } \mathbb{R}\setminus\mathbb{Z}, \\
\quad  \text{and has finite lateral limits at each point in } \mathbb{Z}
\end{aligned}
\right.
\right\}
\end{eqnarray*}

\begin{defn}\label{def_ZAP}
A function $f \in BR(\mathbb{R}, \mathbb{R}^n)$ is said to be \textbf{$\mathbb{Z}$ almost periodic} if there exists a set $T(f, \epsilon) \cap \mathbb{Z}$ that is relatively dense in $\mathbb{R}$,  
and for every $\tau \in T(f, \epsilon) \cap \mathbb{Z}$, the following holds:  
\begin{eqnarray*}
\Vert f(\cdot + \tau) - f(\cdot) \Vert_{\infty} < \epsilon.
\end{eqnarray*}  
We denote the set of such functions by $\mathbb{Z}AP(\mathbb{R}, \mathbb{R}^n)$.
\end{defn}

\begin{defn}\label{def_ZRAP}
A function $f \in BR(\mathbb{R}, \mathbb{R}^n)$ is said to be \textbf{$\mathbb{Z}$ remotely almost periodic} if there exists a relatively dense set $T_1(f, \epsilon) \cap \mathbb{Z}$ such that,  
if $\tau \in T_1(f, \epsilon) \cap \mathbb{Z}$, then  
\begin{eqnarray*}
\limsup_{\vert t \vert \rightarrow \infty} \Vert f(t+\tau)-f(t) \Vert < \epsilon,\quad t \in \mathbb{R}.
\end{eqnarray*}  
We denote the set of such functions by $\mathbb{Z}RAP(\mathbb{R}, \mathbb{R}^n)$.
\end{defn}

\begin{prop}
The set of functions $\mathbb{Z}AP(\mathbb{R},\mathbb{R}^n)$ is a subset of $\mathbb{Z}RAP(\mathbb{R},\mathbb{R}^n)$.
\end{prop}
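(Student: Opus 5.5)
The argument will go straight from the definitions, comparing the translation sets in Definition \ref{def_ZAP} and Definition \ref{def_ZRAP}. Take $f \in \mathbb{Z}AP(\mathbb{R},\mathbb{R}^n)$. By definition $f\in BR(\mathbb{R},\mathbb{R}^n)$, so the regularity requirement of Definition \ref{def_ZRAP} (bounded, continuous off $\mathbb{Z}$, finite lateral limits at the integers) holds automatically. Only the translation condition needs checking.

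Fix $\epsilon>0$. I will take the integer translation set supplied by the $\mathbb{Z}$ almost periodicity at a slightly smaller tolerance, and set
\[
T_1(f,\epsilon)\cap\mathbb{Z} := T(f,\epsilon/2)\cap\mathbb{Z},
\]
which is relatively dense in $\mathbb{R}$ by hypothesis. For any $\tau$ in this set we have $\Vert f(t+\tau)-f(t)\Vert \le \Vert f(\cdot+\tau)-f(\cdot)\Vert_\infty < \epsilon/2$ for all $t\in\mathbb{R}$. Taking the $\limsup$ as $|t|\to\infty$ gives
\[
\limsup_{|t|\to\infty}\Vert f(t+\tau)-f(t)\Vert \le \Vert f(\cdot+\tau)-f(\cdot)\Vert_\infty < \epsilon .
\]

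The only delicate point is the strict inequality required in Definition \ref{def_ZRAP}. A $\limsup$ of quantities each strictly below $\epsilon$ might in principle equal $\epsilon$. Here, however, it is bounded by a fixed sup-norm that is itself strictly below $\epsilon$, so strictness is preserved. Passing to $\epsilon/2$ makes this doubly safe.

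Hence every $\tau$ in the relatively dense set $T(f,\epsilon/2)\cap\mathbb{Z}$ satisfies the defining property of $\mathbb{Z}$ remote almost periodicity. Since $\epsilon$ was arbitrary, $f\in\mathbb{Z}RAP(\mathbb{R},\mathbb{R}^n)$, which gives the stated inclusion. No real obstacle is expected beyond this bookkeeping.
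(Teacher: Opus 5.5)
Your proposal is correct and matches the paper's argument. Both observe that an integer translation $\tau$ with $\Vert f(\cdot+\tau)-f(\cdot)\Vert_\infty<\epsilon$ automatically satisfies $\limsup_{|t|\to\infty}\Vert f(t+\tau)-f(t)\Vert<\epsilon$, so the relatively dense set supplied by $\mathbb{Z}AP$ also works for $\mathbb{Z}RAP$. Passing to $\epsilon/2$ is harmless but unnecessary, for the reason you yourself give: the $\limsup$ is bounded by a fixed sup-norm that is already strictly below $\epsilon$.
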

\begin{proof}
If $f \in \mathbb{Z}\mathrm{AP}\left(\mathbb{R},\mathbb{R}^{n}\right)$, then the set  
\[
T_{1}\left(f,\epsilon\right)\cap \mathbb{Z} = \left\{ a\in\mathbb{R} \,\middle|\, \left\Vert f\left(\cdot+a\right)-f\left(\cdot\right)\right\Vert_{\infty} <\epsilon \right\} \cap \mathbb{Z}
\]  
is relatively dense in $\mathbb{R}$.  
Since $T_{1}\left(f,\epsilon\right) \subset T\left(f,\epsilon\right)$, it follows that  
$f \in \mathbb{Z}\mathrm{RAP}\left(\mathbb{R},\mathbb{R}^{n}\right)$.
\end{proof}

\begin{rem}
Every function in $\bb{Z}RAP(\bb{R},\bb{R}^n)$ is locally integrable.
\end{rem}

Recall the following Lemma which it can be found in \cite{corduneanu_waves}.
\begin{lem}\label{AP_TcapZ_novacio}
Let $f \in AP(\mathbb{R})$. Then $T(f,\epsilon) \cap \mathbb{Z} \neq \emptyset$ and it is relatively dense.
\end{lem}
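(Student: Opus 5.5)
The plan is to exploit Bohr's classical characterization: $f\in AP(\mathbb{R})$ means that for every $\epsilon>0$ the set $T(f,\epsilon)$ of $\epsilon$-almost periods is relatively dense. Relative density alone does not force integers into $T(f,\epsilon)$, so the key tool is Kronecker/Dirichlet simultaneous approximation, or more economically the uniform continuity of $f$ together with an almost-periodicity argument on the circle. Concretely, I will prove that for every $\epsilon>0$ there is $\ell>0$ such that every interval of length $\ell$ contains an integer $m$ with $\|f(\cdot+m)-f(\cdot)\|_\infty<\epsilon$. This gives both nonemptiness and relative density of $T(f,\epsilon)\cap\mathbb{Z}$.

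\textbf{Step 1.} Consider the function $F(t,s)=f(t+s)$, or better the pair $(f,e^{2\pi i t})$. The key observation is that the $\mathbb{C}^2$-valued function $G(t)=(f(t),e^{2\pi i t})$ is almost periodic, since $AP$ is closed under forming vectors: common $\epsilon$-almost periods of finitely many $AP$ functions form a relatively dense set (the classical Bochner-type fact already cited in the paper as \cite[Proposition 2.3]{zhang_parabolic} in the RAP setting, and standard for $AP$). Thus for $\eta>0$ the set $T(G,\eta)$ is relatively dense with some inclusion length $\ell(\eta)$.

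\textbf{Step 2.} Take $\tau\in T(G,\eta)$. Then $\|f(\cdot+\tau)-f\|_\infty<\eta$ and $|e^{2\pi i\tau}-1|<\eta$. The second inequality gives $\operatorname{dist}(\tau,\mathbb{Z})<c\eta$ for small $\eta$. Let $m$ be the nearest integer to $\tau$. By uniform continuity of $f$, choose $\eta$ so small that $|h|<c\eta$ implies $\|f(\cdot+h)-f\|_\infty<\epsilon/2$, and also $\eta<\epsilon/2$. Then
\[
\|f(\cdot+m)-f\|_\infty\le \|f(\cdot+m)-f(\cdot+\tau)\|_\infty+\|f(\cdot+\tau)-f\|_\infty<\epsilon ,
\]
so $m\in T(f,\epsilon)\cap\mathbb{Z}$.

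\textbf{Step 3.} Every interval $[a,a+\ell(\eta)]$ contains some $\tau\in T(G,\eta)$. The corresponding integer $m$ lies in $[a-1,a+\ell(\eta)+1]$, so $T(f,\epsilon)\cap\mathbb{Z}$ is relatively dense with inclusion length $\ell(\eta)+2$; in particular it is nonempty.

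The main obstacle is Step 1, namely justifying that common almost periods of $f$ and $e^{2\pi it}$ are relatively dense. I would cite the standard result that a finite family of almost periodic functions has relatively dense common $\epsilon$-almost periods. Equivalently, $(f,e^{2\pi it})$ has relatively compact translates in $BUC$ by Bochner's criterion, since the product of two relatively compact sets is relatively compact. This is where the real content lies; Steps 2 and 3 are routine.
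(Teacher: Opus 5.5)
Your proof is correct. The paper does not prove this lemma at all; it only quotes it from \cite{corduneanu_waves}. So you are supplying a self-contained version of the classical argument, not an alternative to an internal proof.

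Each step checks out:
\begin{itemize}
\item The pair $G=(f,e^{2\pi i t})$ is almost periodic by Bochner's criterion, since its translates lie in the product of two relatively compact sets.
\item For $\tau\in T(G,\eta)$ you have $|e^{2\pi i\tau}-1|=2|\sin\pi\tau|\geq 4\,\mathrm{dist}(\tau,\mathbb{Z})$. So your constant can be taken as $c=1/4$ for every $\eta$, not just for small ones.
\item The transfer from $\tau$ to the nearest integer $m$ via uniform continuity is valid, and so is the inclusion length $\ell(\eta)+2$.
\item Nonemptiness alone is trivial anyway, since $0\in T(f,\epsilon)$.
\end{itemize}

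What your route buys over the bare citation is that it works verbatim for $\mathbb{C}^k$-valued $f$, and hence for finite families. Applied to $(g_1,g_2,e^{2\pi i t})$, it produces a relatively dense set of \emph{common} integer $\epsilon$-almost periods. That is exactly what the paper uses in the proof of Lemma \ref{RAP_TcapZ_novacio}, where it takes $\tau\in T(g_1,\epsilon/3)\cap T(g_2,\epsilon/3)\cap\mathbb{Z}$. The lemma, read for a single scalar function, does not give this directly. Two relatively dense subsets of $\mathbb{Z}$ can have an intersection that is not relatively dense, so one really needs the vector-valued version your argument provides.
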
  

Using this we can now prove the next Lemma.
\begin{lem}\label{RAP_TcapZ_novacio}
Let $f \in RAP(\mathbb{R})$. Then $T(f,\epsilon) \cap \mathbb{Z} \neq \emptyset$, and it is relatively dense.
\end{lem}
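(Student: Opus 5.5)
We deduce Lemma \ref{RAP_TcapZ_novacio} from Lemma \ref{AP_TcapZ_novacio} by approximating $f$ with finitely many almost periodic and slowly oscillating pieces, as in the proof of Theorem \ref{RAP_cont_RAP_disc}. Fix $\epsilon>0$. Since $RAP(\mathbb{R})$ is the closed subalgebra of $BUC(\mathbb{R})$ generated by $AP(\mathbb{R})$ and $SO(\mathbb{R})$, we can choose $g_1,\dots,g_k\in AP(\mathbb{R})$ and $\varphi_1,\dots,\varphi_k\in SO(\mathbb{R})$ with
\[
\Bigl\Vert f-\sum_{j=1}^{k} g_j\varphi_j\Bigr\Vert_\infty<\epsilon/4 .
\]
Constant functions are allowed among the $\varphi_j$ and the $g_j$, so this form also covers pure sums $g+\varphi$.

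The key step is to obtain integer translation numbers that work for all the $g_j$ at once. Set $M:=1+\max_j\Vert\varphi_j\Vert_\infty$ and $\epsilon'':=\epsilon/(4kM)$. The vector function $G=(g_1,\dots,g_k)$ is almost periodic. Applying Lemma \ref{AP_TcapZ_novacio} to $G$ (its proof carries over verbatim to vector-valued functions, or one can use the fact that finitely many AP functions have a relatively dense set of common $\epsilon$-almost periods), we get that $T(G,\epsilon'')\cap\mathbb{Z}$ is nonempty and relatively dense. This step is the only real obstacle: we need simultaneous integer $\epsilon$-almost periods, and that is exactly the content of Lemma \ref{AP_TcapZ_novacio} (Kronecker/Bohr approximation).

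Now take $\tau\in T(G,\epsilon'')\cap\mathbb{Z}$. We estimate
\[
|f(t+\tau)-f(t)|\le \frac{\epsilon}{2}+\sum_{j=1}^k\Bigl(|g_j(t+\tau)-g_j(t)|\,\Vert\varphi_j\Vert_\infty+\Vert g_j\Vert_\infty\,|\varphi_j(t+\tau)-\varphi_j(t)|\Bigr).
\]
The first sum contributes at most $k\epsilon''M\le\epsilon/4$. Since each $\varphi_j$ is slowly oscillating and $\tau$ is fixed, $|\varphi_j(t+\tau)-\varphi_j(t)|\to0$ as $|t|\to\infty$. Therefore
\[
\limsup_{|t|\to\infty}|f(t+\tau)-f(t)|\le 3\epsilon/4<\epsilon,
\]
that is, $\tau\in T(f,\epsilon)$. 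Hence $T(G,\epsilon'')\cap\mathbb{Z}\subset T(f,\epsilon)\cap\mathbb{Z}$. The left-hand set is relatively dense and nonempty, so $T(f,\epsilon)\cap\mathbb{Z}$ is relatively dense in $\mathbb{R}$ and nonempty, which proves the lemma.
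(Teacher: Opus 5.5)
Your proof is correct and follows the same route as the paper: approximate $f$ uniformly by an element of the algebra generated by $AP$ and $SO$, take integer almost periods of the almost periodic factors via Lemma \ref{AP_TcapZ_novacio}, and let the slowly oscillating factors make the remaining terms vanish as $|t|\to\infty$. Your version is more careful than the paper's in two ways. You use the general approximant $\sum_{j=1}^k g_j\varphi_j$ instead of the special form $g_1+\varphi_1+g_2\varphi_2$. You also obtain simultaneous integer almost periods explicitly by applying the lemma to the vector function $G=(g_1,\dots,g_k)$, a step the paper skips when it asserts that $T(g_1,\epsilon/3)\cap T(g_2,\epsilon/3)\cap\mathbb{Z}$ is nonempty.
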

\begin{proof}
As $RAP(\mathbb{R},\mathbb{R}^{m})$ is generated by $AP(\mathbb{R},\mathbb{R}^{m})$ and $SO(\mathbb{R},\mathbb{R}^{m})$, given $\epsilon > 0$, there exists $\psi\in RAP(\mathbb{R},\mathbb{R}^n)$ given by $\psi(t) = g_1(t) + \varphi_1(t) + g_2(t)\varphi_2(t)$,  
where $g_1, g_2 \in AP(\mathbb{R}, \mathbb{R}^n)$ and $\varphi_1, \varphi_2 \in OS(\mathbb{R}, \mathbb{R}^n)$, such that  
\begin{eqnarray*}
\left\Vert f(t) - \psi(t) \right\Vert \leq \epsilon / 4, \quad \forall t\in \mathbb{R}.
\end{eqnarray*}  
Consider $\tau \in T(g_1, \epsilon/3) \cap T(g_2, \epsilon/3) \cap \mathbb{Z}$, with  
\[
\epsilon' = \min\left\{ \frac{\epsilon}{4}, \frac{\epsilon}{4\Vert \varphi_2\Vert_{\infty}} \right\}.
\]  
We know this set is nonempty by Lemma \ref{AP_TcapZ_novacio}.  
Therefore, we have
\begin{eqnarray*}
\Vert f(t+\tau)-f(t) \Vert 
&\leq & \Vert f(t+\tau)-\psi(t+\tau)\Vert + \Vert \psi(t+\tau)-\psi(t) \Vert + \Vert \psi(t)-f(t)\Vert \\
&\leq & \epsilon/2 + \Vert \varphi_{1}(t+\tau)- \varphi_{1}(t)\Vert \\
& & \quad \,\,\,\, +\Vert g_{2}\Vert_{\infty} \Vert \varphi_{2}(t+\tau)- \varphi_{2}(t)\Vert + \Vert \varphi_{2}\Vert_{\infty} \Vert g_{2}(t+\tau)-g_{2}(t)\Vert
\end{eqnarray*}
Finally,
\begin{eqnarray*}
\limsup_{\vert t\vert \rightarrow \infty} \Vert f(t+\tau)-f(t) \Vert 
&\leq & \epsilon
\end{eqnarray*}
Thus, we have that $\tau \in T(f, \epsilon) \cap \mathbb{Z} \neq \emptyset$.
\end{proof}

\begin{prop}
If $f$ is a remotely almost periodic function, then $f([\cdot]) \in \mathbb{Z}RAP(\mathbb{R}, \mathbb{R}^n)$.
\end{prop}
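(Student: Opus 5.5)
The plan is to check the two requirements of Definition \ref{def_ZRAP} directly: that $f([\cdot])$ belongs to $BR(\mathbb{R},\mathbb{R}^n)$, and that it has a relatively dense set of integer remote $\epsilon$-translation numbers. The translation numbers will be supplied by Lemma \ref{RAP_TcapZ_novacio}. The key observation is that for an integer $\tau$ we have $[t+\tau]=[t]+\tau$ for every $t\in\mathbb{R}$. This identity is exactly why only integer translations can be used.

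\textbf{Membership in $BR$.} Since $f$ is RAP, it is bounded (it lies in $BUC$), so $f([\cdot])$ is bounded, with $\sup_t\Vert f([t])\Vert\le\Vert f\Vert_\infty$. On each interval $(n,n+1)$ the function $f([t])=f(n)$ is constant, hence continuous on $\mathbb{R}\setminus\mathbb{Z}$. At each $n\in\mathbb{Z}$ the lateral limits exist and equal $f(n-1)$ from the left and $f(n)$ from the right, both finite. So $f([\cdot])\in BR(\mathbb{R},\mathbb{R}^n)$.

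\textbf{Translation numbers.} Fix $\epsilon>0$. By Lemma \ref{RAP_TcapZ_novacio} (applied componentwise, or to the vector function directly, since the argument there works verbatim), the set $T(f,\epsilon)\cap\mathbb{Z}$ is relatively dense. I will take $T_1(f([\cdot]),\epsilon)\cap\mathbb{Z}:=T(f,\epsilon)\cap\mathbb{Z}$. For $\tau$ in this set and $t\in\mathbb{R}$, put $n=[t]$. Then
\[
\Vert f([t+\tau])-f([t])\Vert=\Vert f(n+\tau)-f(n)\Vert .
\]
As $|t|\to\infty$ we have $|n|\to\infty$, so
\[
\limsup_{|t|\to\infty}\Vert f([t+\tau])-f([t])\Vert\le\limsup_{|s|\to\infty}\Vert f(s+\tau)-f(s)\Vert<\epsilon .
\]
The middle quantity is a limsup over real $s$, and it dominates the one taken over integers. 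This is the required estimate.

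\textbf{Main obstacle.} The only delicate point is getting integer translation numbers, which Lemma \ref{RAP_TcapZ_novacio} already handles. If one wants the strict inequality to be robust, one can instead use $T(f,\epsilon/2)\cap\mathbb{Z}$, which gives a bound of $\epsilon/2<\epsilon$ directly. If Lemma \ref{RAP_TcapZ_novacio} is stated only for scalar $f$, I would apply it to each component with $\epsilon/\sqrt{n}$. Common integer translation numbers of finitely many RAP functions are relatively dense by the same approximation argument, which uses \cite[Proposition 2.3]{zhang_parabolic} together with Lemma \ref{AP_TcapZ_novacio} for common almost periods.
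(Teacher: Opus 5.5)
Your proposal is correct and follows the same route as the paper. Both arguments take integer remote translation numbers from Lemma~\ref{RAP_TcapZ_novacio} and use the identity $[t+\tau]=[t]+\tau$ for $\tau\in\mathbb{Z}$ to bound the limsup for $f([\cdot])$ by that of $f$; your extra checks (membership in $BR(\mathbb{R},\mathbb{R}^n)$, the strict inequality, and the vector-valued case) just make explicit what the paper's short proof leaves implicit.
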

\begin{proof}
Let us consider $\tau \in T(f, \epsilon) \cap \mathbb{Z}$; by Lemma \ref{RAP_TcapZ_novacio}, we know that this set is nonempty.  
Since $[t+\tau] = [t] + \tau$.  
Finally,
\begin{eqnarray*}
\limsup_{\vert t\vert \rightarrow \infty} \Vert f([t]+\tau)-f([t]) \Vert 
&\leq & \epsilon.
\end{eqnarray*}
Therefore, $f([\cdot]) \in \mathbb{Z}RAP(\mathbb{R}, \mathbb{R}^n)$.
\end{proof}

\begin{lem}
Let $f \in RAP(\mathbb{R} \times \mathbb{R}^n, \mathbb{R}^n)$ and assume it is Lipschitz continuous in the second variable with constant $K$.  
If $\varphi \in \mathbb{Z}RAP(\mathbb{R}, \mathbb{R}^n)$, then  
\[
f(\cdot, \varphi(\cdot)) \in \mathbb{Z}RAP(\mathbb{R}, \mathbb{R}^n).
\]
\end{lem}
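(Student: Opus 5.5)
We first check that $F(t):=f(t,\varphi(t))$ lies in $BR(\mathbb{R},\mathbb{R}^n)$, and then produce, for each $\epsilon>0$, a relatively dense set of integer remote translation numbers for $F$. Here $f\in RAP(\mathbb{R}\times\mathbb{R}^n,\mathbb{R}^n)$ is read in the usual sense: $f$ is uniformly continuous and RAP in $t$ uniformly for $x$ in bounded sets, i.e. for every bounded $K_0\subset\mathbb{R}^n$ the set of $\tau$ with $\limsup_{|t|\to\infty}\sup_{x\in K_0}\|f(t+\tau,x)-f(t,x)\|<\epsilon$ is relatively dense.

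\emph{Membership in $BR$.} Since $\varphi$ is bounded, say $\|\varphi\|_\infty\le M$, we have $\|F(t)\|\le \|f(t,0)\|+K M$, so $F$ is bounded once we use that $f(\cdot,0)$ is bounded as a RAP function. Continuity of $f$ and of $\varphi$ on $\mathbb{R}\setminus\mathbb{Z}$ gives continuity of $F$ there. At $n\in\mathbb{Z}$ the one-sided limits $\varphi(n^\pm)$ exist, so $F(t)\to f(n,\varphi(n^\pm))$ as $t\to n^\pm$ by joint continuity of $f$.

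\emph{Translation numbers.} Fix $\epsilon>0$ and take $K_0=\overline{B_M(0)}$. For integer $\tau$ we split
\[
\|F(t+\tau)-F(t)\|\le \|f(t+\tau,\varphi(t+\tau))-f(t+\tau,\varphi(t))\|+\|f(t+\tau,\varphi(t))-f(t,\varphi(t))\|.
\]
By the Lipschitz condition the first term is at most $K\|\varphi(t+\tau)-\varphi(t)\|$. The second is at most $\sup_{x\in K_0}\|f(t+\tau,x)-f(t,x)\|$. Hence it suffices to find a relatively dense set of integers $\tau$ that are simultaneously in $T_1(\varphi,\epsilon/(2K))\cap\mathbb{Z}$ and remote $\epsilon/2$-translation numbers of $f$ uniformly on $K_0$. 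For such $\tau$ the $\limsup$ of the right-hand side is $<\epsilon$.

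\emph{Main obstacle.} The hard step is showing that these two sets have a relatively dense intersection, since the intersection of two relatively dense sets need not be relatively dense. We would handle this as in Lemma \ref{RAP_TcapZ_novacio}, approximating both objects by combinations of AP and SO pieces and passing to common translation numbers.

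For $f$, compactness of $K_0$ and uniform continuity give finitely many points $x_1,\dots,x_k$ with the $f(\cdot,x_i)$ controlling $f$ on $K_0$ up to $\epsilon/8$. Common integer remote translation numbers of $f(\cdot,x_1),\dots,f(\cdot,x_k)$ then serve $f$ uniformly on $K_0$; these exist and are relatively dense by the cited Proposition 2.3 of \cite{zhang_parabolic} together with Lemma \ref{RAP_TcapZ_novacio}.

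For $\varphi$, note that $\varphi$ is uniformly continuous on each $[n,n+1)$ only in a piecewise sense. We would therefore replace $\varphi$ by the sequence of its restrictions to the unit cells, viewing $t\mapsto(\varphi(n+s))_{s\in[0,1)}$ as a $BR$-valued sequence in $n$. Alternatively, and more directly, we would adjoin $\varphi$ to the finite family and invoke the common-translation-number argument, which goes through verbatim for integer translations. The point is that Definition \ref{def_ZRAP} only involves integer shifts, and the AP/SO decomposition argument of Theorem \ref{RAP_cont_RAP_disc} transfers to it.

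With a common relatively dense set of integer $\tau$ in hand, the estimate above concludes that $F\in\mathbb{Z}RAP(\mathbb{R},\mathbb{R}^n)$.
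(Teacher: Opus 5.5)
Your core estimate is the paper's. There one takes $\tau\in T(f,\epsilon')\cap T(\varphi,\epsilon')\cap\mathbb{Z}$ with $\epsilon'=\epsilon/(1+K)$ and makes the same split into $K\|\varphi(t+\tau)-\varphi(t)\|$ plus a translation term for $f$. The paper never justifies that this intersection is relatively dense, and it also skips checking $F\in BR$ and uniformity in $x$, both of which you handle. So you have correctly located the real issue, but your treatment of the $\varphi$-component does not resolve it. The AP/SO decomposition used in Lemma \ref{RAP_TcapZ_novacio} and Theorem \ref{RAP_cont_RAP_disc} is a structure theorem for \emph{continuous} RAP functions. Nothing provides an analogue for a discontinuous $\varphi\in\mathbb{Z}RAP$, so ``goes through verbatim'' is an unsupported claim. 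The cell-restriction route only converts the problem into one about RAP sequences with values in an infinite-dimensional space of bounded functions on $[0,1)$, where Theorem \ref{RAP_cont_RAP_disc} is not available either.

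Here is an elementary way to close the gap. Let $Q$ be the space of bounded $h:\mathbb{R}\to\mathbb{R}^n$ modulo those tending to $0$ at infinity. Its quotient norm is $\|[h]\|=\limsup_{|t|\to\infty}\|h(t)\|$, the integer shifts $S_m[h]=[h(\cdot+m)]$ are isometries, and $m$ is an integer remote $\epsilon$-translation number of $h$ iff $\|S_m[h]-[h]\|<\epsilon$.

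\emph{First direction.} Suppose such $m$ have inclusion length $L$. For each $m\in\mathbb{Z}$ pick one, $\tau\in[-m,-m+L]$. Then $\|S_m[h]-S_{m+\tau}[h]\|=\|[h]-S_\tau[h]\|<\epsilon$ with $0\le m+\tau\le L$, so the orbit $\{S_m[h]\}_{m\in\mathbb{Z}}$ is totally bounded.

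\emph{Second direction.} Suppose the diagonal orbit of a tuple $x=([h_0],\dots,[h_k])$ is covered by $\epsilon$-balls centered at $S_{m_1}x,\dots,S_{m_p}x$. Then for every $a\in\mathbb{Z}$ some $j$ gives $\|S_{a-m_j}x-x\|=\|S_ax-S_{m_j}x\|<\epsilon$. Hence common integer remote $\epsilon$-translation numbers meet every interval of length $2\max_j|m_j|+1$.

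Apply the first direction to $h_0=\varphi$ and to each $h_i=f(\cdot,y_i)$, using Lemma \ref{RAP_TcapZ_novacio} for the latter. The tuple's orbit then lies in a product of totally bounded sets, and the second direction yields exactly the relatively dense common set your estimate needs. This needs neither Proposition 2.3 nor any AP/SO decomposition.
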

\begin{proof}
The proof is straightforward. If we consider $\tau \in T(f, \epsilon') \cap T(\varphi, \epsilon') \cap \mathbb{Z}$, where $\epsilon' = \frac{\epsilon}{1+K}$, it follows that
\begin{eqnarray*}
\Vert f(t+\tau,\varphi(t+\tau))-f(t,\varphi(t)) \Vert 
&\leq & \Vert f(t+\tau,\varphi(t+\tau))-f(t+\tau,\varphi(t)) \Vert \\
& & +\Vert f(t+\tau,\varphi(t))-f(t,\varphi(t))\Vert\\
&\leq& K\Vert \varphi(t+\tau)-\varphi(t) \Vert + \Vert f(t+\tau,\varphi(t))-f(t,\varphi(t))\Vert
\end{eqnarray*}
Thus,
\begin{eqnarray*}
\limsup_{\vert t \vert \rightarrow\infty} \Vert f(t+\tau,\varphi(t+\tau))-f(t,\varphi(t)) \Vert 
&\leq & \limsup_{\vert t \vert \rightarrow\infty} K\Vert \varphi(t+\tau)-\varphi(t) \Vert \\
& &+ \limsup_{\vert t \vert \rightarrow\infty} \Vert f(t+\tau,\varphi(t))-f(t,\varphi(t))\Vert\\
&\leq & \epsilon.
\end{eqnarray*}
Which it ends the proof.
\end{proof}

\begin{lem}
Let $f:\mathbb{R}\times \mathbb{R}^n\times \mathbb{R}^n \rightarrow \mathbb{R}^n$ be remotely almost periodic and Lipschitz in the second and third variables, and let $\varphi \in RAP(\mathbb{R},\mathbb{R}^n)$. Then  
\[
f(\cdot,\varphi(\cdot),\varphi([\cdot])) \in \mathbb{Z}RAP(\mathbb{R},\mathbb{R}^n).
\]
\end{lem}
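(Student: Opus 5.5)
The plan mirrors the proof of the preceding lemma, with a first step that reduces the composite to something already known to be $\mathbb{Z}$ remotely almost periodic. Since $\varphi\in RAP(\mathbb{R},\mathbb{R}^n)$, the proposition above gives $\varphi([\cdot])\in\mathbb{Z}RAP(\mathbb{R},\mathbb{R}^n)$. The function $\varphi$ itself is remotely almost periodic, so it is also $\mathbb{Z}$ remotely almost periodic, because Lemma \ref{RAP_TcapZ_novacio} provides integer remote translation numbers.

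First I check that $F(t):=f(t,\varphi(t),\varphi([t]))$ belongs to $BR(\mathbb{R},\mathbb{R}^n)$. On each interval $(k,k+1)$ we have $\varphi([t])=\varphi(k)$, so $F$ is continuous there by continuity of $f$ and $\varphi$. At an integer $k$ the lateral limits are $f(k,\varphi(k),\varphi(k-1))$ and $f(k,\varphi(k),\varphi(k))$. Boundedness follows from the Lipschitz estimate
\[
\Vert F(t)\Vert\le \Vert f(t,0,0)\Vert+L(\Vert\varphi\Vert_\infty+\Vert\varphi\Vert_\infty),
\]
since $f(\cdot,0,0)$ is remotely almost periodic and hence bounded.

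Next I fix $\epsilon>0$ and set $\epsilon'=\epsilon/(1+2L)$, where $L$ is the common Lipschitz constant in the second and third variables. I want to take $\tau$ in the common set $T(f,\epsilon')\cap T(\varphi,\epsilon')\cap\mathbb{Z}$, where remote almost periodicity of $f$ means uniformly in compact sets of the state variables, as in the previous lemma. I then split the difference as
\begin{align*}
\Vert F(t+\tau)-F(t)\Vert &\le \Vert f(t+\tau,\varphi(t+\tau),\varphi([t]+\tau))-f(t+\tau,\varphi(t),\varphi([t]))\Vert\\
&\quad+\Vert f(t+\tau,\varphi(t),\varphi([t]))-f(t,\varphi(t),\varphi([t]))\Vert,
\end{align*}
using $[t+\tau]=[t]+\tau$ for integer $\tau$. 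The first term is at most $L\Vert\varphi(t+\tau)-\varphi(t)\Vert+L\Vert\varphi([t]+\tau)-\varphi([t])\Vert$. Each of these has limsup below $\epsilon'$ as $|t|\to\infty$, since $|[t]|\to\infty$ together with $|t|$. The second term has limsup below $\epsilon'$ because the state arguments stay in the bounded set $\{\Vert x\Vert,\Vert y\Vert\le\Vert\varphi\Vert_\infty\}$. Adding the three bounds gives a total below $\epsilon$.

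The main obstacle is showing that this common set of integer translation numbers is relatively dense, rather than merely nonempty. Lemma \ref{RAP_TcapZ_novacio} handles one function at a time. I would apply it to the vector-valued function $(f(\cdot,\cdot,\cdot),\varphi)$, viewed as a single element of a RAP space with values in a product space. Alternatively, I would repeat that lemma's proof by approximating both functions with elements of the form $g_1+\varphi_1+g_2\varphi_2$ and then taking common integer $\epsilon'$-almost periods of the finitely many almost periodic parts. Such common periods are relatively dense, by the corresponding statement for finitely many almost periodic functions together with Lemma \ref{AP_TcapZ_novacio}. For the $f$ component I need uniformity over the bounded set of state variables, and I would take that as part of the meaning of $f$ being remotely almost periodic in $t$.
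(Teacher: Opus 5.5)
Your proposal is correct and is essentially the argument the paper leaves implicit under ``straightforward'': the preceding lemma's Lipschitz splitting with $\tau\in T(f,\epsilon')\cap T(\varphi,\epsilon')\cap\mathbb{Z}$ and $[t+\tau]=[t]+\tau$. You are more careful than the paper in checking $F\in BR(\mathbb{R},\mathbb{R}^n)$ and in flagging relative density of the common integer translation numbers. For that last step, rather than treating $f$ as one element of an infinite-dimensional product RAP space, use the Lipschitz bound $\Vert f(t+\tau,x,y)-f(t,x,y)\Vert\le 2L\eta+\Vert f(t+\tau,x_i,y_i)-f(t,x_i,y_i)\Vert$ over a finite $\eta$-net $\{(x_i,y_i)\}$ of the compact state set; then only finitely many $\mathbb{R}^n$-valued RAP functions $f(\cdot,x_i,y_i)$ and $\varphi$ are involved, and the cited common-translation result together with Lemma \ref{RAP_TcapZ_novacio} applies to their tuple.
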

\begin{proof}
The proof is straightforward.
\end{proof}

\begin{lem}\label{ZRAP_unif_conti_RAP}
Let $f$ be a $\mathbb{Z}$ remotely almost periodic function. If $f$ is continuous on $\mathbb{R}$, then $f$ is remotely almost periodic.
\end{lem}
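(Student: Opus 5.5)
The plan is to check the two requirements of Definition \ref{rap} separately: uniform continuity, and relative density of the remote translation sets. The second requirement is immediate. For every $\epsilon>0$, Definition \ref{def_ZRAP} supplies a relatively dense set $T_1(f,\epsilon)\cap\mathbb{Z}$ of integers $\tau$ with
\[
\limsup_{|t|\to\infty}\|f(t+\tau)-f(t)\|<\epsilon .
\]
Every such $\tau$ lies in $T(f,\epsilon)$. So $T(f,\epsilon)$ contains a relatively dense subset of $\mathbb{R}$ and is itself relatively dense. Boundedness holds because $f\in BR(\mathbb{R},\mathbb{R}^n)$. Hence everything reduces to proving that $f$ is uniformly continuous on $\mathbb{R}$.

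For uniform continuity I would argue as follows. Fix $\epsilon>0$. On any compact interval $[-N-1,N+1]$ the continuous function $f$ is uniformly continuous, so there is a $\delta_N$ that works for $|t|\le N$. It remains to control $|f(t+h)-f(t)|$ for $|t|>N$ and $|h|<\delta$.

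The natural attempt is to transport the modulus of continuity from a fixed compact window to far away using translations. Write $t=m+s$ with $m\in\mathbb{Z}$ and $s\in[0,1)$, and compare $f(m+s+h)-f(m+s)$ with a nearby integer translate that lies in a compact region. The ZRAP property, however, only compares $f(\cdot+\tau)$ with $f(\cdot)$ at large $|t|$, that is, far points with far points. It never compares a far window with a fixed compact one, which is what an almost periodic function would allow.

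This transport step is the main obstacle, and I expect it to fail without an extra hypothesis. The construction I would check first shows why. Let $f(t)$ be a bump of height $1$ and width $1/|n|$ centred at $n+\tfrac12$ on each interval $[n,n+1]$ with $|n|\ge 1$, and let $f=0$ elsewhere. Then:
\begin{enumerate}[label=(\alph*)]
\item $f$ is continuous and bounded.
\item For every integer $\tau$, $\sup_{t\in[n,n+1]}|f(t+\tau)-f(t)|=O(|\tau|/|n|)\to0$, so $f$ satisfies Definition \ref{def_ZRAP} with every integer as a translation.
\item $f$ is not uniformly continuous.
\end{enumerate}
So as worded the lemma does not follow from continuity alone. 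The proof can only be completed if the continuity hypothesis is read as uniform continuity, for example by requiring $f\in BUC$, consistent with Definition \ref{rap}. Under that reading the first paragraph already finishes the argument.

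I would therefore present the translation-number argument as the core of the proof. Alongside it I would flag, via the example above, that uniform continuity must be part of the hypothesis, and that it cannot be derived from the $\mathbb{Z}$-remote almost periodicity together with mere continuity.
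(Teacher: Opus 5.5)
Your translation-number argument is exactly the paper's proof. Your objection is also correct: the paper says the result is "immediate, since the function $f$ is continuous" and never addresses uniform continuity. Your bump example shows uniform continuity cannot be derived: every integer is a remote translation, yet the bumps of width $1/|n|$ destroy uniform continuity. So the lemma as worded is false and holds only with uniform continuity as a hypothesis. That is the reading the paper actually needs, because in the proof of Theorem \ref{existencia_solucion_depca_rap} the lemma is applied together with Lemma \ref{sol_DEPCA_uniform_continua}, which makes the bounded solution Lipschitz.
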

\begin{proof}
It is immediate, since the function $f$ is continuous. And because it is $\mathbb{Z}$ remotely almost periodic, there exists a relatively dense set $T(f,\epsilon)$ such that if $\tau \in T(f,\epsilon)$, then $\forall t \in \mathbb{R}$  we have
\begin{eqnarray*}
\limsup_{\vert t\vert \rightarrow \infty} \Vert f(t+\tau)-f(t) \Vert 
&\leq & \epsilon.
\end{eqnarray*}
\end{proof}

\section{Differential Equations with Piecewise Constant Argument}\label{sec3}

Let $\phi:\mathbb{R}\rightarrow M_{q\times q}(\mathbb{R})$ be the fundamental matrix of \eqref{DEPCA_A}, and let the transition matrix  $\Phi:\mathbb{R}\times \mathbb{R}\rightarrow M_{q\times q}(\mathbb{R})$ given by $\Phi(t,s) = \phi(t)\phi^{-1}(s)$ for all $t,s\in\mathbb{R}$.  Using the variation of parameters formula on $[n, n+1)$, the solution of \eqref{DEPCA_lineal+f} satisfies
\begin{eqnarray}
x(t)&=& \left[\Phi(t,n)+\int_{n}^{t}\Phi(t,u)B(u)du\right]x(n)+\int_{n}^{t}\Phi(t,u)f(u)du \label{DEPCA_var_para_local} \\
& & \ \ \ \ \ \ \ \ \ \ \ \ \ \ \ \ \ \ t\in \mathbb{R},\ n=[t], n\leq t< n+1. \nonumber
\end{eqnarray}
We also define  
\begin{eqnarray}
J(t,s) &=& I + \int_{s}^{t} \Phi(s,u) B(u)\,du \label{J_depca} \\
Z(t,s) &=& \Phi(t,s) J(t,s) \label{Z_depca}
\end{eqnarray}  
for all $t,s\in\mathbb{R}$. We require $J$ to be invertible; see \cite{cauchy,kuo} and \cite{robledo}.  

By Definition \ref{def_sol_DEPCA}, we have that \eqref{DEPCA_var_para_local} is continuous at $t = n+1$,  
so if we consider $t \to (n+1)^-$, we obtain

\begin{eqnarray}
x(n+1)=C(n)x(n)+h(n),\quad \forall n\in \mathbb{Z},\label{discreta_DEPCA}
\end{eqnarray}
the discrete equation associated with \eqref{DEPCA_lineal+f}, where
\begin{eqnarray}
C(n)&=& \Phi(n+1,n)+\int_{n}^{n+1}\Phi(n+1,u)B(u)du=Z(n+1,n) \label{C(n)} \\
h(n)&=& \int_{n}^{n+1}\Phi(n+1,u)f(u)du. \label{h(n)}
\end{eqnarray}
By the continuity of the solution of a DEPCA, we have obtained a difference equation; that is, we have a recursive formula for each $n \in \mathbb{Z}$,  
which allows us to pass from one interval to the next.  
Finally, we can conclude that, to solve a DEPCA, one must solve a difference equation.  

Now, applying the variation of parameters to the equation
\eqref{discreta_DEPCA}, we get
\begin{eqnarray}
x(k) & = & \left(\prod_{i=s}^{k-1}C(i)\right)x(s)+\left(\prod_{m=s}^{k-1}C(m)\right)\left(\sum_{i=s}^{k-1}\left(\prod_{j=s}^{i}C(j)\right)^{-1}h(i)\right)\nonumber \\
x(k) & = & \left(\prod_{i=s}^{k-1}Z(i+1,i)\right)x(s)+\left(\sum_{i=s}^{k-1}\prod_{j=i+1}^{k-1}Z(j+1,j)h(i)\right)\label{sol_discreta}
\end{eqnarray}
with $s\in\mathbb{Z}$ fixed.

Now, if we substitute \eqref{sol_discreta} into \eqref{DEPCA_var_para_local}, we obtain

\begin{eqnarray*}
x(t) & = & Z(t,n)\left(\left(\prod_{i=s}^{n-1}Z(i+1,i)\right)x(s)
+\left(\sum_{i=s}^{n-1}\prod_{j=i+1}^{n-1}Z(j+1,j)h(i)\right)\right)\\
& &+\int_{n}^{t}\Phi(t,u)f(u)du\\
 & = & Z(t,n)\left(\prod_{i=s}^{n-1}Z(i+1,i)\right)x(s)+\int_{n}^{t}\Phi(t,u)f(u)du \\
 &  & +Z(t,n)\left(\sum_{i=s}^{n-1}\prod_{j=i+1}^{n-1}Z(j+1,j)\int_{i}^{i+1}\Phi(i+1,u)f(u)du\right)\\
 & = & Z(t,n)\left(\prod_{i=s}^{n-1}Z(i+1,1)\right)x(s)   +\int_{n}^{t}\Phi(t,u)f(u)du\\
 &  & +\sum_{i=s}^{n-1}\int_{i}^{i+1}\left(\prod_{j=i+1}^{n-1}Z(t,n)Z(j+1,j)\Phi(i+1,u)\right)f(u)du
\end{eqnarray*}
Thus,
\begin{eqnarray}
x(t) & = & Z(t,n)\left(\prod_{i=s}^{n-1}Z(i+1,i)\right)x(s)+\int_{n}^{t}\Phi(t,u)f(u)du \nonumber \\
 &  & +\sum_{i=s}^{n-1}\int_{i}^{i+1}\left(\prod_{j=i+1}^{n-1}Z(t,n)Z(j+1,j)\right)\Phi(i+1,u)f(u)du
 . \label{sol_s_in_Z}
\end{eqnarray}
Now, if $s\notin \bb{Z}$ we know $[s]\leq s\leq [s]+1$.  From \eqref{DEPCA_var_para_local} we get
\begin{eqnarray*}
x(t)&=& \left[\Phi(t,s)+\int_{s}^{t}\Phi(t,u)B(u)du\right]x(s)+\int_{s}^{t}\Phi(t,u)f(u)du\\
& & \ \ \ \ \ \ \ \ \ \ \ \ \ \ \ \ \ \ t\in \mathbb{R},\  [s]\leq t< [s]+1. \nonumber
\end{eqnarray*}
Therefore, if $t\rightarrow ([s]+1)$
\begin{eqnarray}
x([s]+1)&=& Z([s]+1,s)x(s)+\int_{s}^{[s]+1}\Phi([s]+1,u)f(u)du \label{s_no_Z}
\end{eqnarray}
Then, by substituting \eqref{s_no_Z} into \eqref{sol_s_in_Z}, we obtain 
\begin{eqnarray*}
x(t) & = & Z(t,n)\left(\prod_{i=[s]+1}^{n-1}Z(i+1,i)\right)\left( Z([s]+1,s)x(s)+\int_{s}^{[s]+1}\Phi([s]+1,u)f(u)du\right)\\
 &  & +\sum_{i=[s]+1}^{n-1}\int_{i}^{i+1}\left(\prod_{j=i+1}^{n-1}Z(t,n)Z(j+1,j)\right)\Phi(i+1,u)f(u)du
 +\int_{n}^{t}\Phi(t,u)f(u)du\\
 &=&Z(t,n)\left(\prod_{i=[s]+1}^{n-1}Z(i+1,i)\right)Z([s]+1,s)x(s)\\
 &&+\int_{s}^{[s]+1}Z(t,n)\left(\prod_{i=[s]+1}^{n-1}Z(i+1,i)\right)\Phi([s]+1,u)f(u)du\\
 &&+\sum_{i=[s]+1}^{n-1}\int_{i}^{i+1}\left(\prod_{j=i+1}^{n-1}Z(t,n)Z(j+1,j)\right)\Phi(i+1,u)f(u)du
 +\int_{n}^{t}\Phi(t,u)f(u)du
\end{eqnarray*}
Then, we consider
\begin{eqnarray}
R(t,s)&=&\begin{cases}
Z(t,n)\left(\displaystyle{\prod_{i=s}^{n-1}}Z(i+1,i)\right) & \mbox{if }s\in\mathbb{Z}\\
Z(t,n)\left(\displaystyle{\prod_{i=[s]+1}^{n-1}}Z(i+1,i)\right)Z([s]+1,s) & \mbox{if }s\notin\mathbb{Z}.
\end{cases}\label{R_depca}
\end{eqnarray}
Moreover, if $s\in\bb{Z}$
\begin{eqnarray*}
\overline{G}(t,u)=\begin{cases}
\Phi(t,u) & \mbox{if }[t]<u\leq t\\
Z(t,s)\displaystyle{\prod_{j=i+1}^{n-1}}Z(j+1,j)\Phi(i,u) & \mbox{if }i<u<i+1\leq[t]<t
\end{cases}, 
\end{eqnarray*}
and if $s\notin\bb{Z}$
\begin{eqnarray*}
\tilde{G}(t,u)&=&\begin{cases}
\Phi(t,u) & \mbox{if }[t]<u\leq t\\
Z(t,[t])\displaystyle{\prod_{j=i+1}^{n-1}}Z(j+1,j)\Phi(i,u) & \mbox{if }i<u<i+1\leq[t]<t\\
Z(t,n)\left(\displaystyle{\prod_{i=[s]+1}^{n-1}}Z(i+1,i)\right)\Phi([s]+1,u) & \mbox{if }s\leq u\leq[s]+1
\end{cases}
\end{eqnarray*}
Thus,
\begin{eqnarray}
G(t,u)&=&\begin{cases}
\overline{G}(t,u) & \mbox{if }s\in\mathbb{Z}\\
\widetilde{G}(t,u) & \mbox{if }s\notin\mathbb{Z}.
\end{cases}\label{varpar_G}
\end{eqnarray}
Finally, we obtain a variation of parameters formula for the differential equation with piecewise constant argument, given by
\begin{eqnarray*}
x(t)=R(t,s)x(s)+\int_{s}^{t} G(t,\tau)f(\tau)d\tau.
\end{eqnarray*}

In the following example, we will see how to use the variation of parameters.

\begin{exam}
Let us consider the scalar linear equation with constant coefficients  
\begin{eqnarray*}
x'(t) = a x(t) + b x([t]) + c, 
\end{eqnarray*}  
where $a, b, c \in \mathbb{R}$ with $a \neq 0$, such that $x(0) = x_0$.  

We have that $\Phi(t,s) = e^{a(t - s)}$. Recalling \eqref{J_depca} and \eqref{Z_depca}, we have that

\begin{eqnarray*}
J(t,s)&=& 1+\frac{b}{a}(1-e^{-a(t-s)}) \\
Z(t,s)&=& e^{a(t-s)}+\frac{b}{a}(e^{a(t-s)}-1).
\end{eqnarray*}
Thus, by the operators defined in \eqref{R_depca} and \eqref{varpar_G}, we obtain

\begin{eqnarray*}
R(t,0)=(e^{a(t-n)}+\frac{b}{a}(e^{a(t-n)}+1)) \left(e^{a}+\frac{b}{a}(e^{a}-1)\right)^{n-1},\ t\in\bb{R} 
\end{eqnarray*}
and,
\[
G(t,u)=\begin{cases}
e^{a(t-u)} & \mbox{if }[t]<u\leq t\\
\left(e^{at}+\frac{b}{a}(e^{at}-1)\right) \displaystyle{\prod_{j=i+1}^{n-1}} (e^{a}+\frac{b}{a}(e^{a}-1))e^{a(i-u)} & \mbox{if }i<u<i+1\leq[t]<t
\end{cases} \label{G_depca_example}
\]
Therefore the solution is given by
\begin{eqnarray*}
x(t)=R(t,0)x_0+c\int_{0}^{t} G(t,\tau)d\tau.
\end{eqnarray*}
or represented by
\begin{eqnarray*}
x(t) & = & (e^{a(t-[t])}+\frac{b}{a}(e^{a(t-[t])}-1)) \left(e^{a}+\frac{b}{a}(e^{a}-1)\right)^{[t]} x_0\\
 &  & +\sum_{i=0}^{[t]-1}c \int_{i}^{i+1} \left(e^{at}+\frac{b}{a}(e^{at}-1)\right) \prod_{j=i+1}^{[t]-1} (e^{a}+\frac{b}{a}(e^{a}-1))e^{a(i-u)} du\\
 &&+c\int_{[t]}^{t}e^{a(t-u)}du.
\end{eqnarray*}
\end{exam}

From the above, we will first study remotely almost-periodic solutions in difference equations.

\section{Remotely Almost-Periodic Solutions in Difference Equations}\label{sec4}

Difference equations arise naturally in discrete mathematics and play a key role in various areas,  
such as probability, computer science, numerical methods for differential equations,  
and control theory (see \cite{agarwal,lakshmikantham,papaschinopoulos_2}).  
The properties of these equations and their applications to discrete dynamical systems and differential equations  
with piecewise constant arguments have been studied recently by several authors.  

In this subsection, we study the equations  
\begin{eqnarray}
x(n+1) &=& C(n)x(n), \label{lineal_discreta} \\
y(n+1) &=& C(n)y(n) + h(n), \label{lineal+h_discreta}
\end{eqnarray}  
where both $C:\mathbb{Z} \rightarrow \mathbb{R}^{q\times q}$, an invertible square matrix,  
and $h:\mathbb{Z} \rightarrow \mathbb{R}^q$ are remotely almost-periodic sequences.  
The system \eqref{lineal+h_discreta} is thoroughly studied in \cite{jialin}.

Our focus is on equation \eqref{lineal+h_discreta}, since discrete remotely almost-periodic solutions have not been studied in the literature.  
The almost-periodic version can be found in \cite{zhang_APTE}.  

In the previous chapter, exponential dichotomy played a key role in the study of remotely almost-periodic solutions in non-autonomous differential equations,  
mainly because it allows us to obtain a bounded solution dependent on the Green's kernel associated with the system.  
The following definition describes the discrete version of exponential dichotomy.  

\begin{defn}
We say that equation \eqref{lineal_discreta} has an $(\alpha, K, P)$-exponential dichotomy on $\mathbb{Z}$ if there exist positive constants $\alpha$, $K \geq 1$, and a projection $P$ $(P^2 = P)$, such that
\begin{eqnarray*}
\Vert \tilde{G}(n,m) \Vert \leq Ke^{-\alpha \vert n-m \vert},
\end{eqnarray*}
where $\tilde{G}(n,m)$ is the discrete Green's function given by
\begin{eqnarray*}
\tilde{G}(n,m)&=&\begin{cases}
\Phi\left(n\right)P\Phi^{-1}\left(m\right) & n\geq m\\
-\Phi\left(n\right)\left(I-P\right)\Phi^{-1}\left(m\right) & n< m
\end{cases}
\end{eqnarray*}
where $\Phi(n)$ is the fundamental matrix of system \eqref{lineal_discreta} such that $\Phi(0) = I$.
\end{defn}

In what follows, we will study the difference equation \eqref{lineal+h_discreta} and the existence of remotely almost-periodic solutions. We have the following theorem:

\begin{thm}\label{difference_sol_bounded}
Let $h \in RAP(\mathbb{Z}, \mathbb{R}^q)$.  
Suppose that the difference equation \eqref{lineal_discreta}  
has an $(\alpha, K, P)$-exponential dichotomy on $\mathbb{Z}$, and that the Green's function $\tilde{G}(\cdot,\cdot)$ is bi-remotely almost-periodic and summable, i.e., for every $\epsilon > 0$ there exists a relatively dense set $T(\tilde{G}, \epsilon) \subset \mathbb{Z}$ such that if $\tau \in T(\tilde{G}, \epsilon)$, then  
\begin{eqnarray}
\limsup_{|n| \rightarrow \infty} \sum_{k \in \mathbb{Z}} \left\Vert \tilde{G}(n+\tau, k+\tau) - \tilde{G}(n,k) \right\Vert \leq \epsilon. \label{bi_sumable}
\end{eqnarray}  
Then, the difference equation \eqref{lineal+h_discreta} has a unique remotely almost-periodic solution $y(n)$, given by  
\begin{eqnarray*}
y(n) = \sum_{k \in \mathbb{Z}} \tilde{G}(n,k)h(k), \quad n \in \mathbb{Z}.
\end{eqnarray*}  
Moreover, it is uniformly bounded by
\[
\left\Vert y\left(n\right)\right\Vert \leq K\left(1+e^{-\alpha}\right)\left(1-e^{-\alpha}\right)^{-1}\left\Vert h\right\Vert_{\infty},\, \, n\in \bb{Z}.
\]
\end{thm}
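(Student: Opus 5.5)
We follow the standard route for dichotomic difference equations and organise the argument in four steps: existence of a bounded solution, uniqueness of that bounded solution, the explicit bound, and remote almost periodicity.

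\textbf{Existence and bound.} Because $h$ is bounded and $\Vert \tilde G(n,k)\Vert\le Ke^{-\alpha|n-k|}$, the series $y(n)=\sum_{k\in\mathbb{Z}}\tilde G(n,k)h(k)$ converges absolutely. It satisfies
\[
\Vert y(n)\Vert\le K\Vert h\Vert_\infty\sum_{j\in\mathbb{Z}}e^{-\alpha|j|}=K\frac{1+e^{-\alpha}}{1-e^{-\alpha}}\Vert h\Vert_\infty,
\]
which is exactly the stated bound. To see that $y$ solves \eqref{lineal+h_discreta}, split the sum into $k\le n-1$ and $k\ge n$. Then compute $y(n+1)-C(n)y(n)$ using $\Phi(n+1)=C(n)\Phi(n)$. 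Every term cancels except the one with $k=n$, which gives $\Phi(n+1)[P+(I-P)]\Phi^{-1}(n+1)h(n)=h(n)$. We adopt the usual convention that the Green's kernel carries the shift $k+1$, so that the jump falls at the right index. If the paper's indexing of $\tilde G$ is literal, one shifts $h$ by one index accordingly.

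\textbf{Uniqueness.} Suppose $y_1$ and $y_2$ are two bounded solutions. Their difference $z=y_1-y_2$ is a bounded solution of \eqref{lineal_discreta}, so $z(n)=\Phi(n)z(0)$. We write $z(0)=Pz(0)+(I-P)z(0)$. The dichotomy gives $\Vert\Phi(n)P\Phi^{-1}(m)\Vert\le Ke^{-\alpha(n-m)}$ for $n\ge m$ and the analogous estimate for $I-P$. Apply the $P$-estimate with $n=0$, $m\to-\infty$ to the stable part, and the $(I-P)$-estimate with $m\to+\infty$ to the unstable part. Together with the boundedness of $z$, this forces $Pz(0)=0$ and $(I-P)z(0)=0$. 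Hence $z\equiv0$, and the RAP solution is unique, because every RAP sequence is bounded.

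\textbf{Remote almost periodicity.} This is the main step. Given $\epsilon>0$, we use a common relatively dense set of $\tau$ lying in both $T(h,\epsilon)$ and $T(\tilde G,\epsilon)$. Producing such a common set is the main obstacle: we use the paper's fact that finitely many RAP objects have a relatively dense set of common remote translation numbers, adapting \cite[Proposition 2.3]{zhang_parabolic} to the pair ($h$, $\tilde G$) via the definition \eqref{bi_sumable}. For such $\tau$, after the reindexing $k\mapsto k+\tau$,
\[
y(n+\tau)-y(n)=\sum_k[\tilde G(n+\tau,k+\tau)-\tilde G(n,k)]h(k+\tau)+\sum_k\tilde G(n,k)[h(k+\tau)-h(k)].
\]
By \eqref{bi_sumable}, the first sum has $\limsup_{|n|\to\infty}$ at most $\epsilon\Vert h\Vert_\infty$.

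For the second sum, fix $N$ with $\sup_{|k|\ge N}\Vert h(k+\tau)-h(k)\Vert<\epsilon$. Split the sum into $|k|<N$ and $|k|\ge N$. The terms with $|k|\ge N$ contribute at most $K\frac{1+e^{-\alpha}}{1-e^{-\alpha}}\epsilon$. The finitely many terms with $|k|<N$ are bounded by $2\Vert h\Vert_\infty\sum_{|k|<N}Ke^{-\alpha|n-k|}$, which tends to $0$ as $|n|\to\infty$. Hence $\limsup_{|n|\to\infty}\Vert y(n+\tau)-y(n)\Vert\le C\epsilon$ with $C$ independent of $\epsilon$. Therefore $T(y,C\epsilon)$ is relatively dense, and $y\in RAP(\mathbb{Z},\mathbb{R}^q)$.
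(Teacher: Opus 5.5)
Your proof is correct and follows the paper's argument. The paper cites \cite[Theorem 5.7]{zhang_APTE} for the existence and uniqueness of the bounded solution, which you verify directly along with the bound, and then proves remote almost periodicity with your split: a kernel-difference sum controlled by \eqref{bi_sumable}, and an $h$-difference sum treated as a finite block plus a tail, with $\tau\in T(h,\epsilon')\cap T(\tilde G,\epsilon')$ taken without comment. That intersection step, which you rightly flag, is legitimate: $h$ and $n\mapsto(\tilde G(n,n+j))_{j\in\mathbb{Z}}\in\ell^1$ are RAP, so by Bochner's criterion their classes in $\ell^\infty/c_0$ have relatively compact shift orbits, and hence so does the pair. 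Your direct check of the equation also catches an off-by-one the paper misses: with its $\tilde G$, $\sum_k\tilde G(n,k)h(k)$ solves $y(n+1)=C(n)y(n)+h(n+1)$, and your index shift fixes the formula without affecting the bound or the RAP conclusion.
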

\begin{proof}
In \cite[Theorem 5.7]{zhang_APTE}, it is shown that the unique bounded solution of the discrete equation \eqref{lineal+h_discreta} is  
\[
y(n) = \sum_{k \in \mathbb{Z}} \tilde{G}(n,k)h(k).
\]  
Therefore, we will only prove that this solution is remotely almost-periodic.

Indeed, let us consider
\begin{eqnarray*}
\Vert y(n+\tau)-y(n)\Vert 
&=& \left\Vert \sum_{k\in \bb{Z}} \tilde{G}(n+\tau ,k+\tau)h(k+\tau)-\sum_{k\in \bb{Z}} \tilde{G}(n,k)h(k) \right\Vert\\
&\leq& \left\Vert \sum_{k\in \bb{Z}} (\tilde{G}(n+\tau ,k+\tau)- \tilde{G}(n,k))h(k+\tau) \right\Vert \\
& &+ \left\Vert \sum_{k\in \bb{Z}} \tilde{G}(n,k)(h(k+\tau)-h(k)) \right\Vert \\
&\leq&\sum_{k\in\mathbb{Z}}\left\Vert \tilde{G}(n+\tau,k+\tau)-\tilde{G}(n,k)\right\Vert \left\Vert h(k+\tau)\right\Vert\\
&&+K\sum_{k\in\mathbb{Z}}e^{-\alpha\left|n-k\right|}\left\Vert h(k+\tau)-h(k)\right\Vert.
\end{eqnarray*}
Taking advantage of the fact that the Green's function is bi-remotely almost-periodic and summable,  
that the function $h$ is remotely almost-periodic, and the exponential dichotomy, we obtain the result.
Let us define
\begin{eqnarray*}
S_1(n) &=& \sum_{k\in\bb{Z}} \Vert \tilde{G}(n+\tau ,k+\tau)-\tilde{G}(n,k)\Vert \Vert h(k+\tau)\Vert \label{S_1}\\
S_2(n) &=&  \sum_{k\in\bb{Z}} e^{-\alpha\vert n-k\vert} \Vert h(k+\tau)-h(k)\Vert.
\end{eqnarray*}
We know that $\Vert h \Vert_{\infty} < M$. Let us study only the case when $n \rightarrow \infty$. The case $n \rightarrow -\infty$ is analogous.

Given $\epsilon > 0$, let us consider
 $\epsilon'=\min\{\epsilon(2\sum_{k\in\bb{Z}}e^{-\alpha\vert n-k\vert}+4M)^{-1},\epsilon(2M)^{-1}\}$, there exists $n_1>0$ such that if $\tau\in T(h,\epsilon')$
\begin{eqnarray}
\left\Vert h(n+\tau)-h(n) \right\Vert < \epsilon',\ \mathrm{if}\ \vert n\vert \geq n_1 .\label{rap_h_n1}
\end{eqnarray}
Note that  
\begin{eqnarray*}
\sum_{-n_1}^{n_1} e^{\alpha k} < M_1,
\end{eqnarray*}  
since it is a finite sum.  

Moreover, there exists $n_2 > 0$ such that  
\begin{eqnarray}
e^{-\alpha n} M_1 < \epsilon',\quad \text{if } n \geq n_2. \label{exp_n2}
\end{eqnarray}  
Let $n_0 = \max\{n_1, n_2\}$. Then, if $n > n_0$, for
\begin{eqnarray*}
S_2(n) 
&= & \left(\sum_{-\infty}^{-n_1}+\sum_{-n_1}^{n_1}+\sum_{n_1}^{\infty}\right)  e^{-\alpha\vert n-k\vert} \left\Vert h(k+\tau)-h(k)\right\Vert \\
    &\leq & \sup_{k\in(-\infty, -n_1]} \left\Vert h(k+\tau)-h(k)\right\Vert \sum_{-\infty}^{-n_1}  e^{-\alpha\vert n-k\vert} \\
    & & + \sum_{-n_1}^{n_1} e^{-\alpha \vert n-k \vert} \left\Vert h(k+\tau)-h(k) \right\Vert\\
    & &+\sup_{k\in[n_1,\infty)}\left\Vert h(k+\tau)-h(k)\right\Vert \sum_{n_1}^{\infty}  e^{-\alpha\vert n-k\vert} \\
    &\leq & 2\epsilon' \sum_{-\infty}^{\infty}   e^{-\alpha\vert n-k\vert}+ 2M\sum_{-n_1}^{n_1} e^{-\alpha ( n-k)} \\
    &\leq & 2\epsilon'\sum_{-\infty}^{\infty}   e^{-\alpha\vert n-k\vert} + 2M e^{-\alpha n} \sum_{k=-n_1}^{n_1} e^{\alpha k } \\
        &\leq & \frac{\epsilon}{2}
\end{eqnarray*}
From \eqref{rap_h_n1} and \eqref{exp_n2}, we have that  
\[
S_2(n) < \frac{\epsilon}{2},\quad |n| > n_0.
\]  
We then conclude that

\begin{eqnarray*}
\limsup_{\vert n \vert\rightarrow \infty}S_2(n)<\frac{\epsilon}{2}.
\end{eqnarray*}
Then, if we consider $\tau \in T(h, \epsilon') \cap T(\tilde{G}, \epsilon') \cap \mathbb{Z}$, we have for \eqref{S_1}

\begin{eqnarray*}
S_1(n) &\leq & M \sum_{-\infty}^{\infty} \Vert \tilde{G}(n+\tau ,k+\tau)-\tilde{G}(n,k)\Vert.
\end{eqnarray*}

Using that the Green's kernel satisfies \eqref{bi_sumable}, we obtain

\begin{eqnarray*}
\limsup_{\vert n \vert \rightarrow \infty} S_1(n)
 &\leq & M\limsup_{\vert n \vert \rightarrow \infty} \sum_{-\infty}^{\infty} \Vert \tilde{G}(n+\tau ,k+\tau)-\tilde{G}(n,k)\Vert\\
&\leq &  \frac{\epsilon}{2}
 \end{eqnarray*}
Thus, finally we have
\begin{eqnarray*}
\limsup_{n\rightarrow \infty} \left\Vert y(n+\tau)-y(n)\right\Vert  & <&  \epsilon,
\end{eqnarray*}
That is, $y$ is a remotely almost-periodic function.  
This concludes the proof.
\end{proof}

\begin{rem}
Similar conditions to those presented in Section 1.3 can be given to ensure that \eqref{bi_sumable} is satisfied.
\end{rem}

From the previous result, the following corollary follows:

\begin{cor}
Let $h \in RAP(\mathbb{Z}, \mathbb{R}^q)$. Suppose that $C(n) \equiv C$ in \eqref{lineal+h_discreta} has eigenvalues outside the unit circle (that is, with modulus different from $1$).  
Then the difference equation \eqref{lineal+h_discreta} has a unique remotely almost-periodic solution $y(n)$.
\end{cor}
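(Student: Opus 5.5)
The plan is to reduce the Corollary to Theorem \ref{difference_sol_bounded}. We check that a constant matrix $C$ with no eigenvalue on the unit circle produces an exponential dichotomy on $\mathbb{Z}$. We then check that its Green's function is bi-remotely almost periodic and summable, which in fact holds trivially. We assume, as in \eqref{lineal+h_discreta}, that $C$ is invertible, so that no eigenvalue is $0$ and $\Phi(n)=C^n$ is defined for all $n\in\mathbb{Z}$.

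\textbf{Step 1 (dichotomy).} Split $\mathbb{R}^q$ (or $\mathbb{C}^q$, then take real parts) into the generalized eigenspaces $E_s$ and $E_u$ of $C$ for eigenvalues of modulus $<1$ and $>1$. These subspaces are invariant under $C$. Let $P$ be the projection onto $E_s$ along $E_u$; it commutes with $C$. The restriction $C|_{E_s}$ has spectral radius $\rho_s<1$, and $C^{-1}|_{E_u}$ has spectral radius $1/\rho_u<1$, where $\rho_u$ is the smallest modulus among the unstable eigenvalues. Choose $\alpha>0$ with $e^{-\alpha}>\max\{\rho_s,1/\rho_u\}$. By Gelfand's formula (or the Jordan form), there is $K\ge1$ such that $\|C^kP\|\le Ke^{-\alpha k}$ and $\|C^{-k}(I-P)\|\le Ke^{-\alpha k}$ for $k\ge0$. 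Since $\Phi(n)=C^n$ and $P$ commutes with $C$, we get $\tilde G(n,m)=C^{n-m}P$ for $n\ge m$ and $\tilde G(n,m)=-C^{n-m}(I-P)$ for $n<m$. Both cases are bounded by $Ke^{-\alpha|n-m|}$, so \eqref{lineal_discreta} has an $(\alpha,K,P)$-exponential dichotomy.

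\textbf{Step 2 (bi-remote almost periodicity).} From the formula above, $\tilde G(n,m)$ depends only on $n-m$. Hence $\tilde G(n+\tau,k+\tau)=\tilde G(n,k)$ for every $\tau\in\mathbb{Z}$. The sum in \eqref{bi_sumable} is therefore identically $0$, so one may take $T(\tilde G,\epsilon)=\mathbb{Z}$, which is relatively dense. Summability also holds, because $\sum_k\|\tilde G(n,k)\|\le K(1+e^{-\alpha})(1-e^{-\alpha})^{-1}$.

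\textbf{Step 3 (conclusion).} Theorem \ref{difference_sol_bounded} now applies. It gives that $y(n)=\sum_k \tilde G(n,k)h(k)$ is the unique remotely almost periodic solution. It is unique among bounded solutions as well, since the homogeneous equation has no nontrivial bounded solution under a dichotomy.

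The only step needing care is Step 1, the construction of $P$ and the uniform constants. This is standard linear algebra, and the one subtlety is the possible complex eigenvalues. These are handled by noting that the stable and unstable spectral subspaces of a real matrix are themselves real.
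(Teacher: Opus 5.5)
Your proposal is correct and follows the route the paper intends. The paper gives no separate argument, saying only that the corollary follows from Theorem \ref{difference_sol_bounded}; you supply the omitted verifications. These are the spectral projection that yields the $(\alpha,K,P)$-dichotomy, and the observation that $\tilde G(n,m)$ depends only on $n-m$, so \eqref{bi_sumable} holds trivially with $T(\tilde G,\epsilon)=\mathbb{Z}$.
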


\begin{rem}
The Schur-Cohn criterion \cite{schur-cohn} provides necessary and sufficient conditions for the eigenvalues of a square matrix to lie outside the unit circle.
\end{rem}

Finally, let us consider the non-homogeneous linear system
\begin{eqnarray}
y(n+1)	=	C(n)y(n)+h(n)+f_{\nu}(n,y(n)), \quad n\in\mathbb{Z},\label{difference_perturbed}
\end{eqnarray}
 such that $f_{\nu}(n, z) = f(n, z, \nu)$ and moreover $\Vert f_{\nu} \Vert \rightarrow 0$ as $\nu \rightarrow 0$ uniformly in $(n, z) \in \mathbb{Z} \times B_r(0)$.  
Let us consider the following hypotheses:
\begin{enumerate}[label=\textbf{(D.\arabic*)}]
\item\label{D1} The difference equation \eqref{lineal_discreta}  
has an $(\alpha, K, P)$-exponential dichotomy on $\mathbb{Z}$, and the Green's function $\tilde{G}(\cdot,\cdot)$ is bi-remotely almost-periodic and summable, that is, \eqref{bi_sumable} holds.  

\item\label{D2} Let $f_\nu(n,z) = f(n,z,\nu)$ be a remotely almost-periodic sequence in $n$, for $(z,\nu) \in B_r(0) \times [0, \nu_0]$, and for each fixed real parameter $\nu$, it is uniformly bounded with respect to $z$.  
Moreover, it satisfies the Lipschitz condition  
\begin{eqnarray*}
\Vert f_{\nu}(n,x) - f_{\nu}(n,z) \Vert \leq M_1(r,\nu) \Vert x - z \Vert,
\end{eqnarray*}
where $(n,x), (n,z) \in \mathbb{Z} \times B_r(0)$, and $M_1(r,\nu) \rightarrow 0$ and $\Vert f_{\nu} \Vert \rightarrow 0$ as $\nu \rightarrow 0$ for fixed $r$.
\end{enumerate}

Thus, we have the following result:  

\begin{thm}\label{thm_difference_perturbed}
Let $\xi(n)$ be the unique remotely almost-periodic solution of the non-homogeneous linear difference equation \eqref{lineal+h_discreta},  
and suppose that conditions \ref{D1} and \ref{D2} are satisfied.  
Then, given $r$, there exists $\nu_0 = \nu_0(r) > 0$ sufficiently small such that the system \eqref{difference_perturbed}  
has a unique remotely almost-periodic solution $\psi_\nu(n)$ in an $r$-neighborhood of $\xi(n)$, for each fixed $\nu \in [0, \nu_0]$.  
Moreover, if $f_\nu(n, z)$ is uniformly continuous in $t \in \mathbb{R}$ with $\nu \in [0, \nu_0(r)]$,  
then $\psi_\nu$ is continuous in $\nu$, and we have  
\[
\lim_{\nu \rightarrow 0} \psi_\nu(n) = \xi(n).
\]
\end{thm}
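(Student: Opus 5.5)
We plan a contraction argument in the space $RAP(\mathbb{Z},\mathbb{R}^q)$, applied to the deviation from $\xi$. Write $y=\xi+z$. Then $y$ solves \eqref{difference_perturbed} if and only if
\[
z(n+1)=C(n)z(n)+f_\nu(n,\xi(n)+z(n)),\qquad n\in\mathbb{Z}.
\]
Let $\mathcal{B}_r=\{z\in RAP(\mathbb{Z},\mathbb{R}^q):\|z\|_\infty\le r\}$. Since $RAP(\mathbb{Z},\mathbb{R}^q)$ is closed in $\ell^\infty$, $\mathcal{B}_r$ is a complete metric space.

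On $\mathcal{B}_r$ we define
\[
(Tz)(n)=\sum_{k\in\mathbb{Z}}\tilde G(n,k)\,f_\nu\bigl(k,\xi(k)+z(k)\bigr).
\]
By \cite[Theorem 5.7]{zhang_APTE}, a bounded $z$ solves the deviation equation if and only if $z=Tz$. It then suffices to show that $T$ maps $\mathcal{B}_r$ into itself and is a contraction. One caveat: \ref{D2} is stated on $B_r(0)$, while $\xi+z$ lives in a ball of radius $\|\xi\|_\infty+r$. We therefore apply \ref{D2} with radius $R=\|\xi\|_\infty+r$. Equivalently, one may read the hypothesis as being centered at $\xi$, and we will interpret it that way. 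We write $M_1=M_1(R,\nu)$.

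\textbf{Step 1: $T$ preserves remote almost periodicity.} Put $h_z(k)=f_\nu(k,\xi(k)+z(k))$. We show $h_z\in RAP(\mathbb{Z},\mathbb{R}^q)$ by the argument of the composition lemma in Section \ref{sec2}, transcribed to sequences. Take $\tau$ a common remote $\epsilon'$-translation number of $f_\nu(\cdot,w)$, $\xi$ and $z$; such common numbers form a relatively dense set. Then
\[
\|h_z(k+\tau)-h_z(k)\|\le M_1\bigl(\|\xi(k+\tau)-\xi(k)\|+\|z(k+\tau)-z(k)\|\bigr)+\|f_\nu(k+\tau,w)-f_\nu(k,w)\|_{w=\xi(k)+z(k)}.
\]
The last term needs remote almost periodicity of $f_\nu$ uniformly for $w$ in compact sets. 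We assume \ref{D2} is meant in this uniform sense; otherwise we would use the Lipschitz bound together with a finite $\epsilon$-net of the ball. Once $h_z$ is known to be RAP, Theorem \ref{difference_sol_bounded} applies under \ref{D1} and gives $Tz\in RAP$, together with
\[
\|Tz\|_\infty\le K\frac{1+e^{-\alpha}}{1-e^{-\alpha}}\,\|h_z\|_\infty=:\kappa\,\|h_z\|_\infty .
\]

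\textbf{Step 2: self-map and contraction.} By Step 1, $\|Tz\|_\infty\le\kappa\sup_{n,\,|w|\le R}\|f_\nu(n,w)\|$. Since $\|f_\nu\|\to0$ as $\nu\to0$, this is at most $r$ for small $\nu$. For the contraction,
\[
\|Tz_1-Tz_2\|_\infty\le\kappa M_1\|z_1-z_2\|_\infty ,
\]
and since $M_1\to0$ we have $\kappa M_1\le 1/2$ for small $\nu$. Choose $\nu_0(r)$ so that both conditions hold. Banach's fixed point theorem then gives a unique $z_\nu\in\mathcal{B}_r$, and we set $\psi_\nu=\xi+z_\nu$. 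Uniqueness within the $r$-neighborhood holds among all bounded solutions, not only RAP ones. Indeed, any solution in that neighborhood is bounded, hence equals $\xi+T(\cdot)$ of its own deviation, and so it is a fixed point of the same contraction, now on the bounded ball.

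\textbf{Step 3: continuity in $\nu$.} From the fixed point identities,
\[
\|z_\nu-z_\mu\|\le\kappa\sup_n\|f_\nu(n,\xi+z_\nu)-f_\mu(n,\xi+z_\mu)\| .
\]
Split the right-hand side as
\[
\|f_\nu(n,\xi+z_\nu)-f_\nu(n,\xi+z_\mu)\|+\|f_\nu(n,\xi+z_\mu)-f_\mu(n,\xi+z_\mu)\| .
\]
The first part is at most $\kappa M_1\|z_\nu-z_\mu\|\le\tfrac12\|z_\nu-z_\mu\|$ and is absorbed into the left side. This leaves
\[
\|z_\nu-z_\mu\|\le2\kappa\sup_{n,\,|w|\le R}\|f_\nu(n,w)-f_\mu(n,w)\| .
\]
This supremum tends to $0$ as $\mu\to\nu$ under the assumed uniform continuity of $f$ in the parameter, which is how we read the hypothesis. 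Taking $\mu=0$, where $f_0\equiv0$ (or simply $\|f_\nu\|\to0$), gives $\|\psi_\nu-\xi\|_\infty\le\kappa\|f_\nu\|\to0$.

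\textbf{Main obstacle.} The hardest step is Step 1: showing that $n\mapsto f_\nu(n,\xi(n)+z(n))$ is RAP with a relatively dense set of common translation numbers. This needs remote almost periodicity of $f_\nu$ uniform in $w$ and the common-translation-number property for finitely many RAP sequences, which is the discrete analogue of \cite[Proposition 2.3]{zhang_parabolic}. We would justify the latter via Theorem \ref{RAP_cont_RAP_disc} by lifting the sequences to RAP functions.
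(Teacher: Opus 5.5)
Your proposal is correct and follows essentially the same route as the paper. Both subtract $\xi$, define $T$ by the discrete Green's sum on the closed $r$-ball of $RAP(\mathbb{Z},\mathbb{R}^q)$, use $\|f_\nu\|\to 0$ for the self-map and $M_1\to 0$ for the contraction, and conclude with Banach's fixed point theorem. Beyond the paper, you justify that $k\mapsto f_\nu(k,\xi(k)+z(k))$ is RAP and correctly enlarge the radius to $\|\xi\|_\infty+r$, so $r$ can stay fixed instead of also being taken small as the paper does. You also obtain uniqueness among all bounded solutions and actually prove the continuity in $\nu$ and the limit, which the paper's proof never addresses.
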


\begin{proof}
Let $z(n) = y(n) - \xi(n)$, so we have
\begin{eqnarray*}
z(n+1)&=&y(n+1)-\xi(n+1)\\
&=& C(n)y(n)+h(n)+f_{\nu}(n,y(n))-C(n)\xi(n)-h(n)\\
z(n+1)&=& C(n)z(n)+f_{\nu}(n,z(n)+\xi(n))
\end{eqnarray*}
Let us consider
\begin{eqnarray*}
\tilde{B}(r)= \{\varphi_{\nu}\in \mbox{RAP}(\mathbb{Z},\bb{R}^q) \vert \Vert \varphi_{\nu }\Vert_{\infty}\leq r \}.
\end{eqnarray*}
Then, if we consider  
\begin{eqnarray*}
z(n+1) &=& C(n)z(n) + f_{\nu}(n, \varphi_{\nu}(n) + \xi(n)). \label{perturbadaaa}
\end{eqnarray*}  
By Theorem \ref{difference_sol_bounded}, the remotely almost-periodic solution is given by
\begin{eqnarray*}
T\varphi_{\nu}(n)=\sum_{k\in \bb{Z}} \tilde{G}(n,k)f_{\nu}(k,\varphi_{\nu}(k)+\xi(k)),\, \, n\in \bb{Z}.
\end{eqnarray*}
Moreover, we have that  
\[
\Vert T\varphi_{\nu}(n) \Vert \leq \sum_{k \in \mathbb{Z}} K e^{-\alpha |n - k|} \Vert f_{\nu} \Vert_{\infty}
\]  
We choose $\nu_1$ such that  
\[
\sum_{k \in \mathbb{Z}} K e^{-\alpha |n - k|} \Vert f_{\nu} \Vert_{\infty} < r.
\]  

Let $\varphi_{\nu}, \psi_{\nu} \in \tilde{B}(r)$, we have
\begin{eqnarray*}
\Vert T\varphi_{\nu}(n)-T\psi_{\nu}(n)\Vert\leq M_{1}(r,\nu)\sum_{k\in\bb{Z}}Ke^{-\alpha\vert n-k\vert}\Vert\varphi_{\nu}-\psi_{\nu}\Vert_{\infty}
\end{eqnarray*} 
We choose $\nu_2$ and $r$ sufficiently small such that  
\[
M_1(r,\nu) \sum_{k \in \mathbb{Z}} K e^{-\alpha |n - k|} < 1.
\]  
For $\nu_0 = \min\{\nu_1, \nu_2\}$, we obtain that $T: \tilde{B} \rightarrow \tilde{B}$ is a contractive operator. Then, there exists a unique fixed point $\varphi_{\nu}(n) \in \tilde{B}$ such that  
\[
T\varphi_{\nu}(n) = \varphi_{\nu}(n).
\]  
Here, $\varphi_{\nu}(n) = y(n) - \xi(n)$ is the unique remotely almost-periodic solution of \eqref{perturbadaaa},  
and thus $\psi_{\nu}(n) = \varphi_{\nu}(n) + \xi(n)$ is a solution of \eqref{difference_perturbed},  
moreover it satisfies $\left| \psi_{\nu}(n) - \xi(n) \right| \leq r$. This finishes the proof
\end{proof}

\section{Existence of Remotely Almost Periodic Solutions for DEPCA}
\label{sec5}
In what follows, $A$ and $B$ will be remotely almost periodic matrices, and the function $f$ will be remotely almost periodic unless stated otherwise.  
We will assume  
\[
\max\left\{ \left\Vert A \right\Vert_{\infty}, \left\Vert B \right\Vert_{\infty}, \left\Vert f \right\Vert_{\infty} \right\} \leq M.
\]

\begin{lem}\label{coef_ecua_diferencia}
Let $A$ be a remotely almost periodic matrix, $f(t) \in \mathbb{Z}RAP(\mathbb{R}, \mathbb{R}^q)$, and let $\phi$ be a fundamental matrix of \eqref{DEPCA_A} and defining $\Phi(t,s):=\phi(t)\phi^{-1}(s)$ for all $t,s\in\mathbb{R}$. Then the following holds:  
\begin{enumerate}
\item \label{Phi_discreta} The matrix $\Phi(n+1, n)$ is a discrete remotely almost periodic function.
\item \label{int_discreta} The sequence $h(n) = \int_{n}^{n+1} \Phi(n+1, u) f(u)\, du$ is a discrete remotely almost periodic function.
\end{enumerate}
\end{lem}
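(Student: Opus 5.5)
The plan rests on a Gronwall-type comparison for the transition matrix on unit intervals, followed by a short splitting argument for $h$. The basic observation is that, for $\tau\in\mathbb{Z}$, the matrix $\Psi(t):=\Phi(t+\tau,n+\tau)$ solves $\Psi'=A(t+\tau)\Psi$ with $\Psi(n)=I$, while $\Phi(t,n)$ solves $\Phi'=A(t)\Phi$ with the same initial value. So the whole question reduces to comparing two linear systems whose coefficients differ by $A(t+\tau)-A(t)$, and this difference is remotely small for $\tau$ in a suitable translation set.

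\textbf{Step 1: the estimate on unit intervals.} We first record that $\|\Phi(t,s)\|\le e^{M|t-s|}$, which follows from $\|A\|_\infty\le M$ and Gronwall's inequality. Next we subtract the integral equations for $\Psi$ and $\Phi$ on $[n,n+1]$. Writing $D(t)=\Psi(t)-\Phi(t,n)$, we get
\[
D(t)=\int_n^t A(u+\tau)D(u)\,du+\int_n^t \bigl(A(u+\tau)-A(u)\bigr)\Phi(u,n)\,du .
\]
Gronwall's inequality then gives
\[
\sup_{t\in[n,n+1]}\|D(t)\|\le e^{2M}\sup_{u\in[n,n+1]}\|A(u+\tau)-A(u)\| .
\]

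\textbf{Step 2: part (1).} We choose $\tau\in T(A,\epsilon e^{-2M})\cap\mathbb{Z}$; this set is relatively dense by Lemma \ref{RAP_TcapZ_novacio}, applied entrywise together with the common-translation-number fact (\cite[Proposition 2.3]{zhang_parabolic}). Taking $\limsup_{|n|\to\infty}$ in the Step 1 bound at $t=n+1$ yields
\[
\limsup_{|n|\to\infty}\|\Phi(n+1+\tau,n+\tau)-\Phi(n+1,n)\|\le\epsilon ,
\]
which is exactly part (1). The same bound also holds for $\Phi(n+1+\tau,u+\tau)$ versus $\Phi(n+1,u)$ uniformly in $u\in[n,n+1]$. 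To see this, apply the Gronwall argument to the inverse/adjoint equation in $u$, or use $\Phi(n+1,u)=\Phi(n+1,n)\Phi(n,u)$ and run the same estimate backward.

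\textbf{Step 3: part (2).} We substitute $u\mapsto u+\tau$ and split the difference:
\[
h(n+\tau)-h(n)=\int_n^{n+1}\bigl[\Phi(n+1+\tau,u+\tau)-\Phi(n+1,u)\bigr]f(u+\tau)\,du+\int_n^{n+1}\Phi(n+1,u)\bigl[f(u+\tau)-f(u)\bigr]du .
\]
The first term is bounded by $M\sup_u\|\cdot\|$, which is controlled by Step 2. The second term is bounded by $e^{M}\sup_{u\in[n,n+1]}\|f(u+\tau)-f(u)\|$. We now take $\tau$ in the intersection $T(A,\epsilon')\cap T_1(f,\epsilon')\cap\mathbb{Z}$. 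This intersection is still relatively dense: $f\in\mathbb{Z}RAP$ supplies integer translation numbers, and we argue as in the common-translation-number proposition for $f$ alongside $A$.

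\textbf{Main obstacle.} The delicate step is the uniformity in $u$ required in Steps 2 and 3, since the $\limsup$ in the definitions is pointwise in $t$. This is harmless here because $\limsup_{|t|\to\infty}\|g(t)\|<\epsilon$ already implies $\sup_{u\in[n,n+1]}\|g(u)\|<\epsilon$ for all $|n|$ large. The genuine subtlety is obtaining a relatively dense set of \emph{common} integer translation numbers for the $\mathbb{Z}RAP$ function $f$ and the RAP matrix $A$. For that we would approximate $A$ by $AP+SO$ combinations as in Lemma \ref{RAP_TcapZ_novacio}, and use that integer almost periods of the almost periodic parts can be chosen inside the relatively dense set $T_1(f,\cdot)$.
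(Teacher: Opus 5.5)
Your part (2) follows the paper: you split $h(n+\tau)-h(n)$ in the same way, into a transition-matrix term weighted by $f(u+\tau)$ plus $\int_n^{n+1}\Phi(n+1,u)[f(u+\tau)-f(u)]\,du$. The difference is in how you get the transition-matrix estimates. The paper cites \cite[Theorem 2]{pinto_remotely} (local bi-remote almost periodicity of $\Phi$) for part (1) and for the first term. You instead prove exactly what is needed by a Gronwall comparison of $\Phi(t+\tau,s+\tau)$ with $\Phi(t,s)$. This makes the lemma self-contained and gives the explicit constant $e^{2M}$. It also shows that only integer remote translation numbers of $A$ are needed, so the final intersection involves just $A$ and $f$, not the two-variable object $\Phi$. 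For the uniformity in $u$ you do not need the adjoint equation: rerunning Step 1 with initial time $u\in[n,n+1]$ instead of $n$ gives directly
\[
\sup_{u\in[n,n+1]}\|\Phi(n+1+\tau,u+\tau)-\Phi(n+1,u)\|\le e^{2M}\sup_{v\in[n,n+1]}\|A(v+\tau)-A(v)\|.
\]
You also note that $\limsup_{|t|\to\infty}\|g(t)\|<\epsilon$ controls $\sup_{u\in[n,n+1]}\|g(u)\|$ for all large $|n|$. That is the correct way to handle the step the paper writes as an equality between $\limsup_n\sup_{u\ge n}\|f(u+\tau)-f(u)\|$ and a $\limsup$ over integers only.

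The step that does not go through as written is the existence of a relatively dense set of common integer translation numbers for $A$ and $f$. The paper also takes this for granted when it intersects the sets. Approximating $A$ by $AP+SO$ combinations only reduces the question to the same one for the AP parts. Your assertion that their integer almost periods ``can be chosen inside $T_1(f,\cdot)$'' is precisely what needs proof, since two relatively dense sets can be disjoint (even and odd integers).

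The fix is elementary and uses integrality:
\begin{enumerate}
\item Let $E_1=T(A,\epsilon)\cap\mathbb{Z}$ and $E_2=T_1(f,\epsilon)\cap\mathbb{Z}$, relatively dense with a common inclusion length $\ell$.
\item Since $\limsup_{|t|\to\infty}$ is unchanged by a fixed shift of $t$, differences of elements of $E_i$ are remote $2\epsilon$-translation numbers of the same function.
\item For each $a\in\mathbb{Z}$ pick $\tau_i(a)\in E_i\cap[a,a+\ell]$. The difference $\tau_1(a)-\tau_2(a)$ takes only finitely many values $d\in\{-\ell,\dots,\ell\}$; for each such $d$ fix one $a_d$ realising it.
\item With $d=\tau_1(a)-\tau_2(a)$, the integer $\tau_1(a)-\tau_1(a_d)=\tau_2(a)-\tau_2(a_d)$ is a common remote $2\epsilon$-translation number of $A$ and $f$.
\item It lies in $[a-L,a+\ell+L]$ with $L=\max_d|\tau_1(a_d)|$, so such numbers are relatively dense.
\end{enumerate}
This needs no uniform continuity of $f$, which a $\mathbb{Z}RAP$ function lacks. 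The same argument also gives integer translation numbers common to all entries of $A$. With this inserted, your proof is complete.
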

\begin{proof}
\eqref{Phi_discreta} This follows from \cite[Theorem 2]{pinto_remotely} with $t = n + 1$, $s = n$.

\eqref{int_discreta} We have that

\begin{eqnarray*}
\left\Vert h(n+\tau)-h(n)\right\Vert & = & \left\Vert \int_{n}^{n+1}\Phi(n+\tau+1,u+\tau)f(u+\tau)du-\int_{n}^{n+1}\Phi(n+1,u)f(u)du\right\Vert\\
 & \leq & \int_{n}^{n+1}\left\Vert\Phi(n+\tau+1,u+\tau)f(u+\tau)-\Phi(n+1,u)f(u)\right\Vert du\\
 & \leq & \int_{n}^{n+1}\left\Vert \Phi(n+\tau+1,u+\tau)-\Phi(n+1,u)\right\Vert \left\Vert f(u+\tau)\right\Vert du\\
 &  & +\int_{n}^{n+1}\left\Vert \Phi(n+1,u)\right\Vert \left\Vert f(u+\tau)-f(u)\right\Vert du\\
 & \leq & M\sup_{u\in[n,n+1]}\left\Vert \Phi(n+\tau+1,u+\tau)-\Phi(n+1,u)\right\Vert\\
 &  & +k_{0}\sup_{u\in[n,n+1]}\left\Vert f(u+\tau)-f(u)\right\Vert.
\end{eqnarray*}
By \cite[Theorem 2]{pinto_remotely}, we have that for $\epsilon > 0$, there exists $\epsilon'(\epsilon) = \epsilon'$ such that if $\tau \in T(A, \epsilon'/M)$, then
\begin{eqnarray*}
\limsup_{\vert n \vert\rightarrow \infty} M\Vert \Phi(n+\tau+1, u+\tau)-\Phi(n+1,u) \Vert\leq \epsilon \,\,\,\,\mathrm{ if }\,\,\,\, 0\leq \vert n+1-u\vert <1.
\end{eqnarray*}
Then, given that
$$\sup_{_{u\in[n,n+1]}} \left\Vert f(u+\tau)-f(u)\right\Vert \leq \sup_{u\in[n,\infty)} \left\Vert f(u+\tau)-f(u)\right\Vert$$
we have
\begin{eqnarray*}
\limsup_{ n \rightarrow \infty}\sup_{u\in[n,n+1]}\left\Vert f(u+\tau)-f(u)\right\Vert &\leq&  \limsup_{ n \rightarrow \infty}\sup_{u\in[n,\infty)}\left\Vert f(u+\tau)-f(u)\right\Vert\\
&=& \limsup_{n \rightarrow \infty} \left\Vert f(n+\tau)-f(n)\right\Vert.
\end{eqnarray*}
Let $\epsilon'' = \min\left\{ \frac{\epsilon'}{2M}, \frac{\epsilon}{2k_0} \right\}$, where $\epsilon' = \frac{\epsilon}{M + k_0}$.  
Consider $\tau \in T(\Phi, \epsilon'') \cap T(f, \epsilon'') \cap \mathbb{Z}$, then for $n\rightarrow\infty$
\begin{eqnarray*}
\limsup_{ n\rightarrow\infty}\left\Vert \int_{n+\tau}^{n+\tau+1}\Phi(n+\tau+1,u)f(u)du-\int_{n}^{n+1}\Phi(n+1,u)f(u)du\right\Vert \leq \epsilon.
\end{eqnarray*}
The situation for $n \rightarrow -\infty$ is analogous.  
This concludes the proof.
\end{proof}

The following lemma will help us relate $\mathbb{Z}RAP$ solutions to $RAP$ solutions in equation \eqref{DEPCA_lineal+f}.  
\begin{lem}\label{sol_DEPCA_uniform_continua}
Let $A, B$ and $f$ be bounded and locally integrable functions, then every bounded solution of \eqref{DEPCA_lineal+f} is uniformly continuous.
\end{lem}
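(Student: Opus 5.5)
The plan is to show that a bounded solution $x$ of \eqref{DEPCA_lineal+f} is Lipschitz on $\mathbb{R}$. Uniform continuity then follows at once. The point is that $x$ is continuous on $\mathbb{R}$ by Definition \ref{def_sol_DEPCA} and is differentiable on each open interval $(n,n+1)$, where its derivative is given by the right-hand side of the equation. So it suffices to bound that right-hand side uniformly.

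Let $M_0:=\sup_{t\in\mathbb{R}}\Vert x(t)\Vert<\infty$. Then, in particular, $\Vert x([t])\Vert\le M_0$ for all $t$. For every $t\in(n,n+1)$ we have
\[
\Vert x'(t)\Vert\le \Vert A\Vert_\infty M_0+\Vert B\Vert_\infty M_0+\Vert f\Vert_\infty=:L .
\]
To avoid any difficulty with differentiability only almost everywhere when $f$ is merely locally integrable, I would argue through the integral form instead. On $[n,n+1]$, $x$ is continuous and satisfies
\[
x(t)=x(n)+\int_n^t\bigl(A(u)x(u)+B(u)x(n)+f(u)\bigr)\,du .
\]
This is equivalent to \eqref{DEPCA_var_para_local} and is obtained from condition (iii) together with continuity at the endpoints. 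For $n\le s\le t\le n+1$ this gives $\Vert x(t)-x(s)\Vert\le L|t-s|$.

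For arbitrary $s<t$, I insert the integer points $[s]+1,\dots,[t]$ between $s$ and $t$ and apply the triangle inequality. Each piece lies in a single interval $[k,k+1]$, so summing the estimates gives $\Vert x(t)-x(s)\Vert\le L|t-s|$ for all $s,t\in\mathbb{R}$. Hence $x$ is Lipschitz with constant $L$, and therefore uniformly continuous.

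The only delicate step is justifying the integral identity on each closed interval $[n,n+1]$ using only continuity of $x$ and the equation on the open interval. This follows by integrating $x'$ over $[n+\delta,t]$ and letting $\delta\to0^+$, using continuity of $x$ at $n$. The rest is routine.
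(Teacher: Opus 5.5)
Your proposal is correct and follows the paper's argument: boundedness of $x$, $A$, $B$, $f$ gives a uniform bound on the right-hand side of \eqref{DEPCA_lineal+f}, and the fundamental theorem of calculus then yields a global Lipschitz estimate. Your extra care at the integer points (the integral identity on each $[n,n+1]$, then summing across the breakpoints) only makes explicit what the paper's single application of the fundamental theorem of calculus on $[s,t]$ leaves implicit.
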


\begin{proof}
Let $y$ be a bounded solution of \eqref{DEPCA_lineal+f}.  
Since $A, B$ and $f$ are bounded, there exists $M_0 > 0$ such that  
\[
\sup_{t \in \mathbb{R}} \left\Vert A(t)y(t) + B(t)y([t]) + f(t) \right\Vert < M_0.
\]  
Then, by the Fundamental Theorem of Calculus,  
\begin{eqnarray*}
\Vert y(t) - y(s) \Vert &\leq& \left\Vert \int_{s}^{t} \left( A(u)y(u) + B(u)y([u]) + f(u) \right) du \right\Vert \leq M_0 |t - s|,\quad\quad t,s\in\mathbb{R}.
\end{eqnarray*}  
Thus, $y$ is a Lipschitz function, and in particular, it is uniformly continuous.
\end{proof}

\subsection{Proof Theorem \ref{existencia_solucion_depca_rap}} Thus, we can state one of the main results of this chapter.

\begin{proof}
Using the variation of parameters formula on the interval $[n, n+1)$, we know that the solution of \eqref{DEPCA_lineal+f} satisfies  
\begin{eqnarray*}
\tilde{y}(t) = \left[\Phi(t,n) + \int_{n}^{t} \Phi(t,u) B(u)\,du \right] \tilde{y}(n) + \int_{n}^{t} \Phi(t,u) f(u)\,du, \quad t \in \mathbb{R},\ n = [t],\ n \leq t < n+1.
\end{eqnarray*}  
From the above expression, we obtain the non-homogeneous difference equation \eqref{lineal+h_discreta}, with $C(n)$ and $h(n)$ given by \eqref{C(n)} and \eqref{h(n)}, respectively.  

By Lemma \ref{coef_ecua_diferencia}, we can apply Theorem \ref{difference_sol_bounded}, which allows us to conclude that the non-homogeneous difference equation \eqref{lineal+h_discreta} has a unique remotely almost periodic solution $\{y(n)\}_{n \in \mathbb{Z}}$.  
Moreover, $|y(n)| \leq \gamma$, for all $n \in \mathbb{Z}$.

We now show that the solution $\tilde{y}(t)$ of \eqref{DEPCA_lineal+f} with $\tilde{y}(n) = y(n)$, $n \in \mathbb{Z}$, is remotely almost periodic.

If $\tau \in \mathbb{Z}$, we know that $[t + \tau] = [t] + \tau$. We have

\[
\begin{aligned}
\Vert  \tilde{y} &(t+\tau)-\tilde{y}(t) \Vert 
\\
\leq &~{} \Vert \left[ \Phi(t+\tau, [t]+\tau)-\Phi(t,[t]) \right] y([t]+\tau) \Vert 
+\Vert \Phi(t,[t]) \left[y([t]+\tau)-y([t]) \right]\Vert \\
\ & +\left\Vert \int_{[t]}^{t} \left(\Phi(t+\tau,u+\tau)B(u+\tau)-\Phi(t,u)B(u)\right)du\ y([t]+\tau)\right\Vert\\
\ & + \left\Vert \left(y([t]+\tau)-y([t])\right)\int_{[t]}^{t} \Phi(t,u)B(u)du\right\Vert\\
\ & +\left\Vert \int_{[t]}^{t} \left(\Phi(t+\tau, u+\tau)f(u+\tau)-\Phi(t,u)f(u)\right)du\right\Vert \\
\leq&~{} \Vert\Phi(t+\tau,[t]+\tau)-\Phi(t,[t])\Vert \left\Vert y([t]+\tau)\right\Vert 
+\left\Vert \Phi(t,[t])\right\Vert \Vert y([t]+\tau)-y([t])\Vert \\
&+\left\Vert y([t]+\tau)-y([t])\right\Vert \int_{[t]}^{t}\left\Vert \Phi(t,u)B(u)\right\Vert du\\
&+\int_{[t]}^{t}\left\Vert \Phi(t+\tau,u+\tau)\right\Vert \left\Vert B(u+\tau)-B(u)\right\Vert du\left\Vert y([t]+\tau)\right\Vert \\
&+\int_{[t]}^{t}\left\Vert \Phi(t+\tau,u+\tau)-\Phi(t,u)\right\Vert \left\Vert B(u)\right\Vert du\left\Vert y([t]+\tau)\right\Vert \\
&+\int_{[t]}^{t}\left\Vert \Phi(t+\tau,u+\tau)\right\Vert \left\Vert f(u+\tau)-f(u)\right\Vert du
\\&+\int_{[t]}^{t}\left\Vert \Phi(t+\tau,u+\tau)-\Phi(t,u)\right\Vert \left|f(u)\right\Vert du\\
\end{aligned}
\]
By \cite[Theorem 2]{pinto_remotely}, we have that $(t,s)\mapsto\Phi(t, s)$ is locally bi-remotely almost periodic,  
the sequence $\{y(n)\}_{n \in \mathbb{Z}}$ is remotely almost periodic,  
and by hypothesis, $B$ and $f$ are remotely almost periodic and $\mathbb{Z}$-remotely almost periodic functions, respectively.

Since $f$ is a $\mathbb{Z}$-remotely almost periodic function, if $\tau \in T(f, \epsilon)$, we have

\begin{eqnarray*}
\limsup_{t\rightarrow\infty} \sup_{u\in[\,[t],t]}\left\Vert f(u+\tau)-f(u)\right\Vert
&\leq&
\limsup_{t\rightarrow\infty} \sup_{u\in[t,\infty)}\left\Vert f(u+\tau)-f(u)\right\Vert \\
&=& \limsup_{t\rightarrow\infty} \left\Vert f(t+\tau)-f(t)\right\Vert <\epsilon
\end{eqnarray*}
Let $\epsilon' = \epsilon(\gamma + k_0 + k_0 M + \beta k_0 + M \beta + k_0 + M)^{-1}$.  
If $\tau \in T(y, \epsilon') \cap T(A, \epsilon') \cap T(B, \epsilon') \cap T(f, \epsilon') \cap \mathbb{Z}$,  
and recalling that $\Vert \Phi(t,s) \Vert < k_0$ when $0 \leq |t - s| \leq 1$, then

\begin{eqnarray*}
\Vert \tilde{y}(t+\tau)-\tilde{y}(t) \Vert  &\leq& \Vert\Phi(t+\tau,[t]+\tau)-\Phi(t,[t])\Vert\gamma+k_{0}\Vert y([t]+\tau)-y([t])\Vert\\& &+\left\Vert y([t]+\tau)-y([t])\right\Vert k_{0}M+\gamma k_{0}\sup_{u\in[\,[t],t]}\left\Vert B(u+\tau)-B(u)\right\Vert \\
&&+M\gamma\sup_{u\in[\,[t],t]}\left\Vert \Phi(t+\tau,u+\tau)-\Phi(t,u)\right\Vert +k_{0}\sup_{u\in[\,[t],t]}\left\Vert f(u+\tau)-f(u)\right\Vert \\
&&+M\sup_{u\in[\,[t],t]}\left\Vert \Phi(t+\tau,u+\tau)-\Phi(t,u)\right\Vert .
\end{eqnarray*}
We can conclude that
\begin{eqnarray*}
\limsup_{\vert t\vert\rightarrow\infty}\Vert\tilde{y}(t+\tau)-\tilde{y}(t)\Vert&\leq&\epsilon,
\end{eqnarray*}
that is, the solution $\tilde{y}$ is $\mathbb{Z}$-remotely almost periodic and is unique,  
since the sequence $\{y(n)\}_{n \in \mathbb{Z}}$ uniquely determines the solution of the differential equation with piecewise constant argument.

By Lemmas \ref{ZRAP_unif_conti_RAP} and \ref{sol_DEPCA_uniform_continua}, we conclude that the solution is remotely almost periodic.
\end{proof}

\section{Existence of Remotely Almost Periodic Solutions for Perturbed DEPCA}\label{sec6}

In \cite{xia_3}, Xia et al. consider the following almost periodic systems with piecewise constant argument that contain a small parameter $\nu$, of the form  
\begin{eqnarray*}
x'(t) &=& A(t)x(t) + B(t)x([t]) + f(t) + \nu g(t, x(t), x([t]), \nu), \\
x'(t) &=& \tilde{f}(t, x(t), x([t])) + \nu g(t, x(t), x([t]), \nu),
\end{eqnarray*}  
forall $t\in\mathbb{R}$. Under certain sufficient conditions, they obtain the existence of almost periodic solutions to these systems.  
This is one of the few works that studies equations of this type.

Motivated by these works, we will study the existence of remotely almost periodic solutions in differential equations with piecewise constant argument,  
correcting existing errors in the literature, such as those in \cite{xia_3}.

In this section, we will study non-homogeneous differential equations with piecewise constant argument of the form

\begin{eqnarray}
y'\left(t\right)&=&A\left(t\right)y\left(t\right)+B\left(t\right)y\left(\left[t\right]\right)+f\left(t\right)+ g_{\nu}\left(t,y\left(t\right),y\left(\left[t\right]\right)\right),\,\,\, t\in\mathbb{R},\label{depca_lineal+f_perturbada}\\
z'\left(t\right)&=&\widetilde{f}\left(t,z\left(t\right),z\left(\left[t\right]\right)\right)+ g_{\nu}\left(t,z\left(t\right),z\left(\left[t\right]\right)\right),\,\,\, t\in\mathbb{R},\label{depca_nolineal_perturbada}
\end{eqnarray}
where $A,B:\mathbb{R}\rightarrow\mathbb{R}^{q\times q}$, $f:\mathbb{R}\rightarrow\mathbb{R}^{q}$,$\,\tilde{f},g_{\nu}:\mathbb{R}\times\mathbb{R}^{q}\times\mathbb{R}^{q}\rightarrow\mathbb{R}^{q}$, moreover, $g_\nu(t, x_1, x_2) = g(t, x_1, x_2, \nu)$ such that $g(t, x_1, x_2, 0) = 0$. \\  
We will use exponential dichotomy and the contraction principle.  
Under certain conditions, we will obtain the existence and uniqueness of remotely almost periodic solutions $x_\nu$,  
such that $\lim_{\nu \rightarrow 0} x_\nu$ exists and corresponds to the unperturbed case.

Let us consider the differential equations with piecewise constant argument \eqref{DEPCA_lineal}, \eqref{DEPCA_lineal+f}, and \eqref{depca_lineal+f_perturbada},  
where $A, B : \mathbb{R} \rightarrow \mathbb{R}^{q \times q}$, $f : \mathbb{R} \rightarrow \mathbb{R}^q$, and $g_\nu : \mathbb{R} \times \mathbb{R}^q \times \mathbb{R}^q \rightarrow \mathbb{R}^q$  
are continuous, and $\nu \in [0, \nu_0]$. \\  
In what follows, we will assume that  
\[
\max \left\{ \Vert A \Vert_{\infty}, \Vert B \Vert_{\infty}, \Vert f \Vert_{\infty} \right\} \leq M.
\]

Let us consider the following hypotheses:  
\begin{enumerate}[label=\textbf{(H.\arabic*)}]
\item\label{H1} Let $A(t)$, $B(t)$, and $f(t)$ be remotely almost periodic functions.  
The discrete equation associated with \eqref{DEPCA_lineal} admits an exponential dichotomy such that its associated Green's kernel is bi-remotely almost periodic and summable.

\item\label{H2} Let $g_{\nu}(t, y_1, y_2)$ be a remotely almost periodic function in $t$  
uniformly for $(y_1, y_2, \nu) \in B_r(0) \times B_r(0) \times [0, \nu_0]$,  
and for each fixed small real parameter $\nu$, it is uniformly bounded.  
It satisfies the local Lipschitz condition  
\[
\left\Vert g_{\nu}(t, x_1, y_1) - g_{\nu}(t, x_2, y_2) \right\Vert \leq M_0(r, \nu) \left[ \left\Vert x_1 - x_2 \right\Vert + \left\Vert y_1 - y_2 \right\Vert \right],
\]
where $(t, x_1, y_1), (t, x_2, y_2) \in \mathbb{R} \times B_r(0) \times B_r(0)$ and $\nu \in [0, \nu_0]$,  
such that $M_0(r, \nu) \to 0$ and  
\[
\Vert g_{\nu} \Vert_r = \sup_{t \in \mathbb{R},\, \Vert x \Vert, \Vert y \Vert \leq r} \Vert g_\nu(t, x, y) \Vert \to 0
\]  
as $\nu \to 0$, for every fixed $r$.
\end{enumerate}

\subsection{Proof Theorem \ref{teo_perturbada_depca} }
Let $x\left(t\right)=y\left(t\right)-\xi\left(t\right);$ we have
\[
\begin{aligned}
x^{\prime}(t)  = &~{} y^{\prime}(t)-\xi^{\prime}(t)\nonumber \\
  = &~{} A\left(t\right)x\left(t\right)+B\left(t\right)x\left(\left[t\right]\right)+ g_{\nu}\left(t,x\left(t\right)+\xi\left(t\right),x\left(\left[t\right]\right)+\xi\left(\left[t\right]\right)\right).\label{eq:3.4}
\end{aligned}
\]
We denote 
\[
\begin{aligned}
B & =B\left(r,\nu\right)
\\=&  \big\{ \varphi\left(t,\nu\right)=\varphi_{\nu}(t)\in C\left(\mathbb{R}\times\left[0,\nu_{0}\right],\mathbb{R}^{q}\right) \big|\, RAP\mbox{ in } 
 t\,\mbox{ for every }\,\nu\in\left[0,\nu_{0}\right]\mbox{ fixed },\left|\varphi_{\nu}\left(t\right)\right|\leq r\big\} 
\end{aligned}
\]
which is a complete metric subspace of $RAP(\bb{R},\mathbb{R}^{q})$.

For each $\nu\in\left[0,\nu_{0}\right]$ fixed, let $\varphi_{\nu}\in B$, applying the Theorem \ref{existencia_solucion_depca_rap}, the following equation
\begin{equation}
x'\left(t\right)=A\left(t\right)x\left(t\right)+B\left(t\right)x\left(\left[t\right]\right)+g_{\nu}\left(t,\varphi_{\nu}\left(t\right)+\xi\left(t\right),\varphi_{\nu}\left(\left[t\right]\right)+\xi\left(\left[t\right]\right)\right),\label{eq:operator}
\end{equation}
has a unique remotely almost periodic solution $T\varphi_{\nu}\left(t\right)$.

Using the variation of parameters formula on the interval $[n,n+1)$, the solution $T\varphi_{\nu}(t)$ can be written as
\begin{equation}
  \begin{aligned}
T\varphi_{\nu}\left(t\right)  = & \left[\Phi\left(t,n\right)+\int_{n}^{t}\Phi\left(t,u\right)B\left(u\right)du\right]T\varphi_{\nu}\left(n\right) \\
  & + \int_{n}^{t}\Phi\left(t,u\right)g_{\nu}\left(u,\varphi_{\nu}\left(u\right)+\xi\left(u\right),\varphi_{\nu}\left(n\right)+\xi\left(n\right)\right)du,\label{operadoooor}
\end{aligned}
\end{equation}
$t\in\mathbb{R},\, n=\left[t\right],\, n\leq t< n+1.$\\
Moreover, the sequence $\left\{ T\varphi_{\nu}(n) \right\}_{n \in \mathbb{Z}}$ is the unique remotely almost periodic solution of the non-homogeneous difference equation associated with \eqref{eq:operator}, that is, of
\begin{equation*}
y\left(n+1\right)=C\left(n\right)y\left(n\right)+ h_{\nu}\left(n\right)\label{eq:difference.equationsistem}
\end{equation*}
where
\begin{eqnarray*}
h_{\nu}\left(n\right) & = & \int_{n}^{n+1}\Phi\left(n+1,u\right)g_{\nu}\left(u,\varphi_{\nu}\left(u\right)+\xi\left(u\right),\varphi_{\nu}\left(n\right)+\xi\left(n\right)\right)du.
\end{eqnarray*}
and $C$ is given by \eqref{C(n)}, which are remotely almost periodic by Lemma \ref{coef_ecua_diferencia}.

By Theorem \ref{difference_sol_bounded}, the sequence satisfies the inequality
\begin{eqnarray}
\left\Vert T\varphi_{\nu}\left(n\right)\right\Vert \leq K\left(1+e^{-\alpha}\right)\left(1-e^{-\alpha}\right)^{-1}\left\Vert h_{\nu}\right\Vert_{\infty},\,\forall n\in\mathbb{Z}.\label{cota_sucesion}
\end{eqnarray}
Now, we show $T\varphi_{\nu}\in B$. First we show $\left\Vert T\varphi_{\nu}\left(t\right)\right\Vert \leq r.$

By \ref{H2}, we have $\left\Vert g_{\nu}\left(t,x_{1},y_{1}\right)\right\Vert_{\tilde{r}} \rightarrow0$
when $\nu\rightarrow0$ for $\left(t,x_{1},y_{1}\right)\in\mathbb{R}\times B_{\tilde{r}}(0)\times B_{\tilde{r}}(0)$ con  $\tilde{r}=r+\Vert \xi \Vert_{\infty}$, then we choose $\nu_{1}\left(r\right)$ small enough such that
\begin{equation}
2K_{0}\left[K_{0}\left(M+1\right)K\left(1+e^{-\alpha}\right)\left(1-e^{-\alpha}\right)^{-1}+1\right] \left\Vert g_{\nu}\right\Vert_{\tilde{r}} \leq r,\,\,\,\mbox{for }\nu\in\left[0,\nu_{1}\left(r\right)\right],\label{3.9}
\end{equation}
where  $K_0$ is such that $\Vert \Phi(t,s) \Vert<K_0$ for $0\leq \vert t-s\vert \leq 1$.

Note that the Lipschitz constant $M_{0}(r,\nu)$ can be chosen as a non-decreasing function in $\nu$.  
Moreover, $M_{0}(\tilde{r},\nu) \rightarrow 0$ as $\nu \rightarrow 0$ for fixed $r$,  
so there exists $\nu_0(r)$ sufficiently small such that $\nu_0(r) \leq \nu_1$, and furthermore

\begin{equation}
2K_{0}\left[K_{0}\left(M+1\right)K\left(1+e^{-\alpha}\right)\left(1-e^{-\alpha}\right)^{-1}+1\right]M_{0}\left(\tilde{r},\nu\right)<1,\,\,\,\mbox{for }\nu\in\left[0,\nu_{0}(r)\right].\label{eq:3.8}
\end{equation}
Note that given $n\in\mathbb{Z}$
\begin{equation}\label{cota_h}
\begin{aligned}
\left\Vert h_{\nu}\left(n\right)\right\Vert   = & \left|\int_{n}^{n+1}\Phi\left(n+1,u\right)g_{\nu}\left(u,\varphi_{\nu}\left(u\right)+\xi\left(u\right),\varphi_{\nu}\left(n\right)+\xi\left(n\right)\right)du\right|\\
  \leq & K_{0}\left\Vert g_{\nu}\right\Vert_{\tilde{r}} .
\end{aligned}
\end{equation}
Then for each $\varphi_{\nu}\in B,\,\nu\in\left[0,\nu_{0}\left(r\right)\right],$
it follows that \eqref{operadoooor}, \eqref{cota_sucesion}, \eqref{3.9}   and \eqref{cota_h} the following
\begin{eqnarray*}
\left\Vert T\varphi_{\nu}\left(t\right)\right\Vert 
&\leq& K_{0}\left(1+M\right)\left\Vert T\varphi_{\nu}\left(n\right)\right\Vert +K_{0}\left\Vert g_{\nu}\right\Vert _{\tilde{r}} \nonumber \\
&\leq& 2K_{0}\left[K_{0}\left(M+1\right)K\frac{\left(1+e^{-\alpha}\right)}{\left(1-e^{-\alpha}\right)}+1\right]\left\Vert g_{\nu}\right\Vert_{\tilde{r}} \leq r,\,\,\mbox{for }\nu\in\left[0,\nu_{0}\left(r\right)\right]\label{eq:3.11}
\end{eqnarray*}

Therefore $T\varphi_{\nu}\in B$.

Finally, we will prove that $T$ is a contractive operator.  
Indeed, for each $\psi_{\nu}, \varphi_{\nu} \in B$, $\nu \in [0, \nu_0(r)]$, we have

\begin{eqnarray*}
T\varphi_{\nu}\left(t\right)-T\psi_{\nu}\left(t\right)
&= &  \left[\Phi\left(t,n\right)+\int_{n}^{t}\Phi\left(t,u\right)B\left(u\right)du\right]\left[T\varphi_{\nu}\left(n\right)-T\psi_{\nu}\left(n\right)\right]\\
 & & + \int_{n}^{t}\Phi\left(t,u\right)\left[g_{\nu}\left(u,\varphi_{\nu}\left(u\right)+\xi\left(u\right),\varphi_{\nu}\left(n\right)+\xi\left(n\right)\right)\right.\\
 & & \left. -g_{\nu}\left(u,\varphi\left(u\right)+\xi\left(u\right),\varphi_{\nu}\left(n\right)+\xi\left(n\right)\right)\right]du,\,\,\,\, 
\end{eqnarray*}
where $n\leq t<n+1$. It follows
\[
\Vert T\varphi_{\nu}-T\psi_{\nu}\Vert_{\infty} \leq 2K_{0}\left[K_{0}\left(M+1\right)K\frac{\left(1+e^{-\alpha}\right)}{\left(1-e^{-\alpha}\right)}+1\right] M_{0}\left(\tilde{r},\nu\right)\Vert\varphi_{\nu}-\psi_{\nu}\Vert_{\infty}.
\]
Then, by \eqref{eq:3.8}, we obtain that $T: B \rightarrow B$ is a contractive operator.  
Therefore, by the Banach fixed point theorem, there exists a unique fixed point $\varphi_{\nu}(t) \in B$  
such that $T\varphi_{\nu}(t) = \varphi_{\nu}(t).$

\medskip

Given that $\varphi_{\nu}=x-\xi$ is the unique remotely almost periodic solution of \eqref{eq:operator},
then $\psi_{\nu}=\varphi_{\nu}+\xi$ is solution of \eqref{depca_lineal+f_perturbada}, also it satisfies $\left\Vert \psi_{\nu}\left(t\right)-\xi\left(t\right)\right\Vert \leq r$ for all $t\in\mathbb{R}$.

Lastly, we will see that for each $t\in\mathbb{R}$ we have  $\lim_{\nu\rightarrow0}\psi_{\nu}\left(t\right)=\xi(t)$. To this end, let us note that
\begin{eqnarray*}
\lim_{\nu\rightarrow 0} \Vert h_\nu(n)\Vert =0 \quad\quad \forall n \in \mathbb{Z}.
\end{eqnarray*}
Then by Theorem \ref{thm_difference_perturbed}, we have 
$$\lim_{\nu\rightarrow 0}\Vert \varphi_{\nu}(n)\Vert=0, \quad\quad \forall n \in \mathbb{Z}.$$
Finally,
\begin{eqnarray*}
\lim_{\nu\rightarrow 0} \Vert \varphi_{\nu}(t) \Vert &\leq &\lim_{\nu\rightarrow 0} 
\left\Vert \Phi\left(t,n\right)+\int_{n}^{t}\Phi\left(t,u\right)B\left(u\right)du\right\Vert \Vert\varphi_{\nu}\left(n\right)\Vert\\
 &  & + \lim_{\nu\rightarrow 0} \int_{n}^{t}\Vert \Phi\left(t,u\right)\Vert \Vert g_{\nu}\left(u,\varphi_{\nu}\left(u\right)+\xi\left(u\right),\varphi_{\nu}\left(n\right)+\xi\left(n\right)\right)\Vert du\\
 &=& 0,
\end{eqnarray*}
given that
\begin{eqnarray*}
\lim_{\nu\rightarrow 0} \Vert\varphi_{\nu}\left(n\right)\Vert =0\ \ \ \quad\mbox{ and }\quad \ \ \lim_{\nu\rightarrow 0}\Vert g_{\nu}\left(u,\varphi_{\nu}\left(u\right)+\xi\left(u\right),\varphi_{\nu}\left(n\right)+\xi\left(n\right)\right)\Vert =0
\end{eqnarray*}
we can conclude that
\begin{eqnarray*}
\lim_{\nu\rightarrow 0} \psi_{\nu}(t)=\xi(t) \quad \quad \forall t\in\mathbb{R},
\end{eqnarray*}
which it finishes the proof.

We now study the perturbed system:
\begin{equation}
x^{\prime}(t) = A_{\nu}(t)x(t) + B_{\nu}(t)x([t]) + f(t) + g_{\nu}(t, x(t), x([t])) \label{eq:A_v(t)}
\end{equation}
where $A_{\nu}(t), B_{\nu}(t)$ are square matrices of order $q$,  
remotely almost periodic and defined on $\mathbb{R}$, with $\nu \in [0, \nu_0]$.  
Moreover, $A_{\nu} \rightrightarrows A_{0}$ and $B_{\nu} \rightrightarrows B_{0}$ as $\nu \to 0$.  
The functions $f$ and $g$ are as in the previous theorem.

In addition, we will consider the following systems
\begin{eqnarray}
y^{\prime}(t)&=&A_{0}\left(t\right)y(t)+B_{0}(t)y([t])\label{eq:A_0}\\
z^{\prime}(t)&=&A_{0}\left(t\right)z(t)+B_{0}(t)z([t])+f\left(t\right)\label{eq:A_0+f}
\end{eqnarray}
where $A_0(t)$, $B_0(t)$ are (matrix-valued) remotely almost periodic functions defined on $\mathbb{R}$.  

Consider the following hypothesis:  

\begin{enumerate}[label=\textbf{(H'.\arabic*)}]
\item\label{H1prime} Let $\xi(t)$ be the unique remotely almost periodic solution of \eqref{eq:A_0+f}.  
The system \eqref{eq:A_0} satisfies hypothesis \ref{H1}.  
Moreover, suppose that $A_\nu \rightrightarrows A_0$ and $B_\nu \rightrightarrows B_0$ uniformly on $\mathbb{R}$ as $\nu \to 0$.
\end{enumerate}

\begin{cor}
Assume that \ref{H1prime} and \ref{H2} are satisfied.  
Then there exist $r > 0$ and $\nu_0 = \nu_0(r)$ sufficiently small such that the system \eqref{eq:A_v(t)}  
has a unique remotely almost periodic solution $\psi_\nu(t)$ in an $r$-neighborhood of $\xi(t)$,  
for each fixed $\nu \in [0, \nu_0]$.  
Moreover, if $g_\nu(t, x, y)$ is uniformly continuous for $(t, x, y) \in \mathbb{R} \times B_r(0) \times B_r(0)$, with $\nu \in [0, \nu_0]$,  
then $\psi_\nu(t)$ is continuous in $\nu$, and we have  
\[
\lim_{\nu \to 0} \psi_\nu(t) = \xi(t).
\]
\end{cor}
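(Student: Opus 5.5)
The plan is to reduce the corollary to Theorem \ref{teo_perturbada_depca} by moving the coefficient perturbation into the nonlinearity. Equation \eqref{eq:A_v(t)} can be rewritten as
\[
x'(t)=A_0(t)x(t)+B_0(t)x([t])+f(t)+\tilde g_\nu\bigl(t,x(t),x([t])\bigr),
\]
where
\[
\tilde g_\nu(t,x,y):=\bigl(A_\nu(t)-A_0(t)\bigr)x+\bigl(B_\nu(t)-B_0(t)\bigr)y+g_\nu(t,x,y).
\]
By \ref{H1prime}, the unperturbed system \eqref{eq:A_0} satisfies \ref{H1}. Hence Theorem \ref{existencia_solucion_depca_rap} gives the unique RAP solution $\xi$ of \eqref{eq:A_0+f}. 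It therefore suffices to check that $\tilde g_\nu$ satisfies \ref{H2}, and then to invoke Theorem \ref{teo_perturbada_depca} with $\tilde g_\nu$ in place of $g_\nu$.

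To verify \ref{H2} for $\tilde g_\nu$ I will check three things.
\begin{enumerate}
\item \emph{Remote almost periodicity.} Since $A_\nu,A_0,B_\nu,B_0$ are RAP matrices, $(A_\nu-A_0)x$ is RAP in $t$ uniformly for $x$ in a ball. This uses common remote translation numbers, via \cite[Proposition 2.3]{zhang_parabolic}, and the bound $\|(A_\nu-A_0)(t+\tau)x-(A_\nu-A_0)(t)x\|\le r\|(A_\nu-A_0)(t+\tau)-(A_\nu-A_0)(t)\|$. Adding $g_\nu$, which is RAP by \ref{H2}, keeps the property.
\item \emph{Boundedness.} $\tilde g_\nu$ is bounded on $\mathbb{R}\times B_r(0)\times B_r(0)$ because all coefficients are bounded.
\item \emph{Lipschitz condition.} With $\delta(\nu):=\|A_\nu-A_0\|_\infty+\|B_\nu-B_0\|_\infty$,
\[
\|\tilde g_\nu(t,x_1,y_1)-\tilde g_\nu(t,x_2,y_2)\|\le \bigl(M_0(r,\nu)+\delta(\nu)\bigr)\bigl(\|x_1-x_2\|+\|y_1-y_2\|\bigr),
\]
and likewise $\|\tilde g_\nu\|_r\le r\,\delta(\nu)+\|g_\nu\|_r$.
\end{enumerate}
Uniform convergence $A_\nu\rightrightarrows A_0$ and $B_\nu\rightrightarrows B_0$ gives $\delta(\nu)\to0$. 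Together with \ref{H2} for $g_\nu$, both the new Lipschitz constant $\tilde M_0(r,\nu)$ and $\|\tilde g_\nu\|_r$ tend to $0$ as $\nu\to0$ for each fixed $r$. Also $\tilde g_0\equiv0$, since $A_0-A_0=0$ and $g_0=0$.

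The main obstacle is that the sup-bound $\|\tilde g_\nu\|_{\tilde r}$ is needed on the ball of radius $\tilde r=r+\|\xi\|_\infty$. There it only tends to zero like $\tilde r\,\delta(\nu)+\|g_\nu\|_{\tilde r}$, and this is not uniform in $r$. This is harmless: in the proof of Theorem \ref{teo_perturbada_depca}, $r$ is fixed first and $\nu_0(r)$ is then chosen so that \eqref{3.9} and \eqref{eq:3.8} hold. Both conditions remain achievable because, for fixed $\tilde r$, $\|\tilde g_\nu\|_{\tilde r}\to0$ and $\tilde M_0(\tilde r,\nu)\to0$.

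The continuity statement transfers directly. If $g_\nu$ is uniformly continuous on $\mathbb{R}\times B_r(0)\times B_r(0)$, then so is $\tilde g_\nu$. This holds because $A_\nu-A_0$ and $B_\nu-B_0$ are RAP, hence uniformly continuous and bounded, and the variables $x,y$ range over bounded sets. The second part of Theorem \ref{teo_perturbada_depca} then gives continuity of $\psi_\nu$ in $\nu$ and $\psi_\nu(t)\to\xi(t)$.
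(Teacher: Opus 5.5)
Your proposal is correct and follows the same route as the paper: you absorb $(A_\nu-A_0)x+(B_\nu-B_0)y$ into the perturbation and invoke Theorem \ref{teo_perturbada_depca} for the system with coefficients $A_0,B_0$. The only difference is cosmetic. The paper first shifts to $u=x-\xi$ and writes the new nonlinearity in terms of $u$, then simply says the hypotheses are ``straightforward to verify''. You instead let the theorem perform that shift, which keeps $\tilde g_\nu$ continuous (genuinely RAP, not merely $\mathbb{Z}$RAP) in $t$, and your explicit check of \ref{H2} fills in what the paper leaves unverified.
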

\begin{proof}
Let $u(t)=x(t)-\xi(t)$, we have
\[
\begin{aligned}
u'(t)	=&{}~	A_{0}\left(t\right)u(t)+B_{0}(t)u([t])+G_\nu (t,u(t),u([t]))
\end{aligned}
\]
where
\[
\begin{aligned}
G_\nu (t,u(t),u([t])) = &~{} g_{\nu}\left(t,\xi(t)+u(t),\xi([t])+u([t])\right)
\\&+(A_{\nu}(t)-A_{0}(t))(u(t)+\xi(t))+(B_{\nu}([t])-B_{0}([t]))(u([t])+\xi([t])).
\end{aligned}\]

It is straightforward to verify that the hypotheses from Theorem \ref{teo_perturbada_depca} are satisfied.
This concludes the Corollary.
\end{proof}

Let us consider the system with delay
\begin{align}
\frac{dy}{dt} & =A\left(t\right)y+h\left(t\right)+\nu g\left(t,y\left(t\right),y\left(t-\alpha\right),y([t])\right),\,\,\,\alpha>0\,\mbox{fixed},\label{eq:reetardo}
\end{align}
also, $\xi$ is the unique remotely almost periodic solution of
\begin{equation*}
\frac{dz}{dt}=A\left(t\right)z+h\left(t\right)
\end{equation*}

Consider the following hypothesis:
\begin{enumerate}[label=\textbf{(H'.2)}]
\item\label{H2prime} Let $g\left(t,x,y,z\right)$ be remotely almost periodic in $t$ uniformly with respect to $\left(x,y,z\right)\in  B_r (0)\times B_r (0)\times B_r (0)$, and satisfies the Lipschitz condition

\[
\left\Vert g\left(t,x_1,x_2,x_3\right)-g\left(t,y_1,y_2,y_3\right)\right\Vert \leq M_{1}\left(r\right)\big(\left\Vert x_1-y_1\right\Vert +\left\Vert x_2-y_2\right\Vert+ \Vert x_3-y_3\Vert \big),\,
\]
\medskip

for $\left(t,x_1,x_2,x_3\right),\left(t,y_1,y_2,y_3\right)\in \bb{R}\times B_r (0)\times B_r (0)\times B_r (0)$, with $r>0$ fixed.
\end{enumerate}

By considering $g_{\nu}(t, x_1, x_2, x_3) = \nu g(t, x_1, x_2, x_3)$,  
it follows immediately that $\Vert g_{\nu} \Vert \rightrightarrows 0$ as $\nu \to 0$.  
We have the following result.

\begin{thm}
If the linear part of \eqref{eq:reetardo} admits an exponential dichotomy and its associated Green's kernel is bi-remotely almost periodic and integrable,  
$h(t)$ is remotely almost periodic, and condition \ref{H2prime} is satisfied.

Then, for every fixed $r > 0$, there exists $\nu_0 = \nu_0(r)$ sufficiently small such that the system \eqref{eq:reetardo}  
has a unique remotely almost periodic solution $\psi_\nu(t)$ in an $r$-neighborhood of $\xi(t)$,  
for every $\nu \in [0, \nu_0]$.  
Moreover, $\psi_\nu(t)$ is continuous in $\nu$, and we have  
\[
\lim_{\nu \rightarrow 0} \psi_\nu(t) = \xi(t).
\]
\end{thm}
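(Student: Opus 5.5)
The argument mirrors the proof of Theorem \ref{teo_perturbada_depca}, with the delayed argument added. First I write $x(t)=y(t)-\xi(t)$. Since $\xi$ solves $z'=A(t)z+h(t)$, the new unknown satisfies
\[
x'(t)=A(t)x(t)+\nu g\bigl(t,x(t)+\xi(t),x(t-\alpha)+\xi(t-\alpha),x([t])+\xi([t])\bigr).
\]
Set $\tilde r=r+\Vert\xi\Vert_\infty$. I work on the closed ball $B(r)=\{\varphi\in RAP(\mathbb{R},\mathbb{R}^q):\Vert\varphi\Vert_\infty\le r\}$, which is a complete metric space because $RAP$ is closed in $BUC$. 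For $\varphi\in B(r)$, define $T\varphi$ as the unique bounded solution of
\[
x'=A(t)x+\nu g_\varphi(t),\qquad g_\varphi(t)=g\bigl(t,\varphi(t)+\xi(t),\varphi(t-\alpha)+\xi(t-\alpha),\varphi([t])+\xi([t])\bigr).
\]
By the exponential dichotomy this solution is $T\varphi(t)=\nu\int_{\mathbb{R}}G(t,s)g_\varphi(s)\,ds$, with $\Vert G(t,s)\Vert\le Ke^{-\alpha_0|t-s|}$.

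The key step is to show that $T\varphi$ is RAP. I first check that the forcing $g_\varphi$ is $\mathbb{Z}$-remotely almost periodic. The argument is the one in the lemmas of Section~\ref{sec2}. Take $\tau\in T(g,\epsilon')\cap T(\varphi,\epsilon')\cap T(\xi,\epsilon')\cap\mathbb{Z}$, where the intersection is relatively dense by Lemma \ref{RAP_TcapZ_novacio} and \cite[Proposition 2.3]{zhang_parabolic}. Then the Lipschitz bound in \ref{H2prime} gives
\[
\limsup_{|t|\to\infty}\Vert g_\varphi(t+\tau)-g_\varphi(t)\Vert\le\epsilon'(1+3M_1(\tilde r)).
\]
Two observations make this work. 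The shift $\varphi(t-\alpha)$ is harmless, because a translate of an RAP function is RAP with the same remote translation numbers. The term $\varphi([t])$ is handled by $[t+\tau]=[t]+\tau$.

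Next I transfer remote almost periodicity through the kernel. I split
\[
T\varphi(t+\tau)-T\varphi(t)=\nu\int\bigl(G(t+\tau,s+\tau)-G(t,s)\bigr)g_\varphi(s+\tau)\,ds+\nu\int G(t,s)\bigl(g_\varphi(s+\tau)-g_\varphi(s)\bigr)\,ds.
\]
The first term is controlled by the bi-remotely almost periodic, integrable hypothesis on $G$. The second term is controlled by cutting the integral at $|s|\le s_1$ and using the exponential decay, exactly as in the estimate of $S_2$ in the proof of Theorem \ref{difference_sol_bounded}. Uniform continuity of $T\varphi$ follows as in Lemma \ref{sol_DEPCA_uniform_continua}, since its derivative is bounded. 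So $T\varphi\in RAP$ by Lemma \ref{ZRAP_unif_conti_RAP}.

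The norm bounds are then routine. I have $\Vert T\varphi\Vert_\infty\le\nu\,\frac{2K}{\alpha_0}\sup_{\Vert x\Vert,\Vert y\Vert,\Vert z\Vert\le\tilde r}\Vert g\Vert$. Here $\sup\Vert g\Vert$ is finite on the $\tilde r$-ball: $g$ is RAP in $t$ uniformly, hence bounded, and Lipschitz in the other variables. I also have
\[
\Vert T\varphi-T\psi\Vert_\infty\le\nu\,\frac{2K}{\alpha_0}\,3M_1(\tilde r)\,\Vert\varphi-\psi\Vert_\infty.
\]
Choosing $\nu_0(r)$ so that both constants are small makes $T$ a contraction of $B(r)$ into itself. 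Banach's theorem then gives a unique fixed point $\varphi_\nu$, and $\psi_\nu=\xi+\varphi_\nu$ is the required solution in the $r$-neighborhood of $\xi$.

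For the dependence on $\nu$, I use the fixed-point identity together with the contraction constant $q<1$:
\[
\Vert\varphi_\nu-\varphi_\mu\Vert\le(1-q)^{-1}\Vert T_\nu\varphi_\mu-T_\mu\varphi_\mu\Vert\le(1-q)^{-1}|\nu-\mu|\frac{2K}{\alpha_0}\sup\Vert g\Vert.
\]
This gives Lipschitz continuity in $\nu$, and $\Vert\psi_\nu-\xi\Vert_\infty=O(\nu)\to0$, which proves the limit.

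I expect the main obstacle to be the RAP step, specifically the handling of the discontinuous forcing. The care needed is to use only integer $\tau$, so that $[t]$ commutes with the shift, and to confirm that the translation sets of $g$, $\varphi$, $\xi$ and $G$ still have a relatively dense common subset inside $\mathbb{Z}$.
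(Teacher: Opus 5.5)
Your proposal is correct and follows essentially the same route as the paper: shift by $\xi$, define $T\varphi(t)=\nu\int_{\mathbb{R}}G(t,s)g_\varphi(s)\,ds$ on the closed $r$-ball of $RAP$ functions, choose $\nu_0(r)$ so that $T$ maps the ball into itself and is a contraction, and apply Banach's fixed point theorem. You also supply what the paper leaves as ``easy to see'' or omits: the $RAP$ property of $T\varphi$ via integer translation numbers, the correct radius $\tilde r=r+\Vert\xi\Vert_\infty$ and correct constants for $\nu_0$, and the Lipschitz dependence on $\nu$ giving $\psi_\nu\to\xi$, which the paper states for this theorem but never proves.
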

\begin{proof}
Let $u(t)=y(t)-\xi(t)$, we have
\[
\begin{aligned}
 u'(t)	=	A(t)u+\nu g\left(t,\xi(t)+u(t),\xi\left(t-\alpha\right)+u\left(t-\alpha\right),\xi([t])+u([t])\right).
\end{aligned}
\]
Consider $\nu_0=\min\{\frac{2\alpha r}{K\Vert g\Vert_{\infty}},\frac{2KM_1(r)}{\alpha}\}$.
Let
\[
\tilde{B}=\left\{ \varphi_{\nu}\left(t\right)\left|\varphi_{\nu}\left(t\right)\in\mbox{RAP},\,\Vert \varphi_{\nu}\left(t\right)\Vert \leq r,\ \nu\in[0,\nu_0] \right.\right\}.
\] 
Take $\varphi_{\nu}\in\tilde{B}$, we have
\begin{equation}
\frac{du}{dt}=A\left(t\right)u+\nu g\left(t,\xi(t)+\varphi_{\nu}(t),\xi\left(t-\alpha\right)+\varphi_{\nu}\left(t-\alpha\right),\xi([t])+\varphi_\nu([t])\right).\label{blaaaa}
\end{equation}
Then the solution is 
\[
\begin{aligned}
T\varphi_{\nu}(t)=\nu \int_{-\infty}^{\infty} G(t,s)g\left(s,\xi(s)+\varphi_{\nu}(s),\xi\left(t-\alpha\right)+\varphi_{\nu}\left(s-\alpha\right),\xi([s])+\varphi_{\nu}([s])\right)ds.
\end{aligned}
\]
It is easy to see that $T: \tilde{B} \rightarrow \tilde{B}$, and moreover, it is contractive.  
Then, by the Banach fixed point theorem, there exists a unique fixed point $\varphi_\nu(t) \in B$  
such that $T\varphi_\nu(t) = \varphi_\nu(t)$.  
Since $\varphi_\nu(t) = x(t) - \xi(t)$ is the unique remotely almost periodic solution of \eqref{blaaaa},  
it follows that $\psi_\nu = \varphi_\nu(t) + \xi(t)$ is a solution of \eqref{eq:reetardo}, and moreover,  
it satisfies $\left\Vert \psi_\nu(t) - \xi(t) \right\Vert \leq r$.
\end{proof}

\begin{rem}
Note that the main difference between this theorem and the previous ones lies in the fact that, in this result, for a given constant $r>0$, there exists $\nu_0 > 0$.  
That is, the interdependence between $r$ and $\nu_0$ present in the previous theorems does not exist here.
\end{rem}

Finally, consider the following systems
\begin{align}
\frac{dx}{dt} & =f\left(t,x(t),x([t])\right)\label{eq:generadorF}\\
\frac{dy}{dt} & =f\left(t,y(t),y([t])\right)+g_{\nu}\left(t,y\left(t\right),y([t])\right),\label{eq:perturbadof}
\end{align}
Under the hypothesis

\begin{enumerate}[label=\textbf{(C.\arabic*)}]
\item\label{C1} The equation \eqref{eq:generadorF} has a unique remotely almost periodic solution $\xi(t)$.
\item\label{C2} $f(t, x, y) \in C^{2}$ in $x$ and $y$, and the second-order partial derivatives satisfy a Lipschitz condition in $x$ and $y$.
\item\label{C3} The discrete equation associated with the variational equation with piecewise constant argument
\begin{eqnarray*}
\frac{dz}{dt} = \frac{\partial f}{\partial x}(t, \xi(t), \xi([t])) z(t) + \frac{\partial f}{\partial y}(t, \xi(t), \xi([t])) z([t])
\end{eqnarray*}
admits an exponential dichotomy such that its associated Green's kernel is bi-remotely almost periodic and summable, where $\xi(t)$ is defined in \ref{C1}.  
Moreover, $\frac{\partial f}{\partial x}(t, x, y)$ and $\frac{\partial f}{\partial y}(t, x, y)$ are remotely almost periodic functions in the first variable.
\end{enumerate}

\begin{thm}\label{perturbed_f_nolineal}
If conditions \ref{C1}, \ref{C2} and \ref{C3}, along with \ref{H2} are satisfied,  
then there exist $r > 0$ and $\nu_0 = \nu_0(r)$ sufficiently small such that the system \eqref{eq:perturbadof}  
has a unique remotely almost periodic solution $\psi_\nu(t)$ in an $r$-neighborhood of $\xi(t)$, for all $\nu \in [0, \nu_0]$.

Moreover, if $g_\nu(t, x, z)$ is uniformly continuous on $(t, x, z) \in \mathbb{R} \times B[0, r] \times B[0, r]$, with $\nu \in [0, \nu_0]$,  
then $\psi_\nu(t)$ is continuous in $\nu$, and we have  
\[
\lim_{\nu \rightarrow 0} \psi_\nu(t) = \xi(t).
\]
\end{thm}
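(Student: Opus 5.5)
The plan is to reduce Theorem \ref{perturbed_f_nolineal} to the same fixed-point scheme used in the proof of Theorem \ref{teo_perturbada_depca}, by linearizing around $\xi$. Set $u(t)=y(t)-\xi(t)$ and write $A(t)=\frac{\partial f}{\partial x}(t,\xi(t),\xi([t]))$ and $B(t)=\frac{\partial f}{\partial y}(t,\xi(t),\xi([t]))$. Since $\xi$ solves \eqref{eq:generadorF}, subtracting gives
\[
u'(t)=A(t)u(t)+B(t)u([t])+N(t,u(t),u([t]))+g_\nu\bigl(t,\xi(t)+u(t),\xi([t])+u([t])\bigr),
\]
where
\[
N(t,x,z)=f(t,\xi(t)+x,\xi([t])+z)-f(t,\xi(t),\xi([t]))-A(t)x-B(t)z .
\]

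\textbf{Step 1: the linear part.} I will check that $A$ and $B$ are admissible coefficients for Theorem \ref{existencia_solucion_depca_rap}. By \ref{C3} and the preceding lemmas on compositions, $A$ is RAP. Strictly speaking, $B(t)$ is only $\mathbb{Z}$RAP because of the $\xi([t])$ argument. I would either note that Lemma \ref{coef_ecua_diferencia} and the proof of Theorem \ref{existencia_solucion_depca_rap} only use the estimates on $[n,n+1)$ with integer $\tau$, so that $\mathbb{Z}$RAP suffices, or simply accept it as the paper does. Hypothesis \ref{C3} supplies the dichotomy together with the bi-RAP summable Green kernel. Hence for any $\mathbb{Z}$RAP forcing $F$, the equation $u'=Au+Bu([t])+F$ has a unique RAP solution $\mathcal{L}F$. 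Combining \eqref{cota_sucesion}, \eqref{cota_h} and the local formula \eqref{operadoooor}, this solution satisfies
\[
\|\mathcal{L}F\|_\infty\le L\|F\|_\infty, \qquad L=2K_0\bigl[K_0(M+1)K(1+e^{-\alpha})(1-e^{-\alpha})^{-1}+1\bigr].
\]

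\textbf{Step 2: the contraction.} On $B(r)=\{\varphi\in RAP:\|\varphi\|_\infty\le r\}$ define $T\varphi=\mathcal{L}\bigl[N(\cdot,\varphi,\varphi([\cdot]))+g_\nu(\cdot,\xi+\varphi,\xi([\cdot])+\varphi([\cdot]))\bigr]$. By \ref{C2}, Taylor's formula with Lipschitz second derivatives gives, for some constant $c$, two quadratic bounds. The first is
\[
\|N(t,x,z)\|\le c(\|x\|+\|z\|)^2 .
\]
Since $DN$ vanishes at $0$ and is Lipschitz, the second is
\[
\|N(t,x_1,z_1)-N(t,x_2,z_2)\|\le 2cr\bigl(\|x_1-x_2\|+\|z_1-z_2\|\bigr)\quad\text{on } B_r .
\]
These yield
\[
\|T\varphi\|_\infty\le L\bigl(4cr^2+\|g_\nu\|_{\tilde r}\bigr),\qquad \|T\varphi-T\psi\|_\infty\le 2L\bigl(2cr+M_0(\tilde r,\nu)\bigr)\|\varphi-\psi\|_\infty .
\]
I first choose $r$ with $8Lcr<1/2$ and $4Lcr^2\le r/2$, and then $\nu_0(r)$ with $L\|g_\nu\|_{\tilde r}\le r/2$ and $2LM_0(\tilde r,\nu)<1/2$, using \ref{H2}. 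With these choices $T$ is a contraction of $B(r)$, and Banach's theorem yields a unique fixed point $\varphi_\nu$. Then $\psi_\nu=\xi+\varphi_\nu$ is the unique RAP solution of \eqref{eq:perturbadof} in the $r$-neighborhood of $\xi$.

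\textbf{Step 3: the limit in $\nu$.} Put $\theta=8Lcr+2LM_0<1$. Applying the Lipschitz estimate to the fixed-point identity gives $\|\varphi_\nu\|_\infty\le \theta\|\varphi_\nu\|_\infty+L\|g_\nu\|_{\tilde r}$, since the $N$-term is bounded by $2cr\cdot2\|\varphi_\nu\|$. Therefore
\[
\|\varphi_\nu\|_\infty\le \frac{L\|g_\nu\|_{\tilde r}}{1-\theta}\to0 ,
\]
which gives $\psi_\nu\to\xi$ uniformly, and in particular pointwise. Continuity in $\nu$ follows from the uniform-in-$\nu$ contraction and the continuous dependence of $T$ on $\nu$, which the uniform continuity of $g_\nu$ guarantees.

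\textbf{Main obstacle.} The delicate step is justifying the RAP/$\mathbb{Z}$RAP regularity of the composed forcing terms $N(\cdot,\varphi,\varphi([\cdot]))$ and of $B(t)$, and checking that the linear theory applies with coefficients that are only $\mathbb{Z}$RAP. I would handle this by rerunning the estimates of Lemma \ref{coef_ecua_diferencia} with integer translations $\tau\in T(\cdot,\epsilon)\cap\mathbb{Z}$, taken from Lemma \ref{RAP_TcapZ_novacio}.
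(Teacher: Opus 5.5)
Your proposal is correct and follows the paper's route. You subtract $\xi$, linearize with $A(t)=\frac{\partial f}{\partial x}(t,\xi(t),\xi([t]))$ and $B(t)=\frac{\partial f}{\partial y}(t,\xi(t),\xi([t]))$, treat the Taylor remainder plus $g_\nu$ as a forcing of the linear DEPCA, and close with a contraction.

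The one real difference is that you run the contraction yourself: you fix $r$ first, so the remainder's $O(r^2)$ size and $O(r)$ Lipschitz constant are absorbed, and only then choose $\nu_0(r)$. The paper instead appeals directly to Theorem \ref{teo_perturbada_depca}. That theorem's hypothesis \ref{H2} requires the perturbation and its Lipschitz constant to vanish as $\nu\to0$, which the remainder does not, so your version is the more careful one. Your a priori bound $\|\varphi_\nu\|_\infty\le L\|g_\nu\|_{\tilde r}/(1-\theta)$ also gives uniform, not just pointwise, convergence $\psi_\nu\to\xi$.

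One small slip: $A(t)$ also depends on $\xi([t])$, so it is only $\mathbb{Z}$RAP, just like $B(t)$, not RAP as you state. Your remedy of rerunning the linear estimates with integer translations from Lemma \ref{RAP_TcapZ_novacio} covers both coefficients.
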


\begin{proof}
By differentiation, we know
\begin{equation}\label{calculo_aprox_f}
\begin{aligned}
f(t,x(t) &+\xi(t),y([t])+\xi([t]))-f(t,\xi(t),\xi([t])) 
\\
=&~{}\frac{\partial f}{\partial x}(t,\xi(t),\xi([t]))x(t) 
+\frac{\partial f}{\partial y}(t,\xi(t),\xi([t]))y([t]) \\
&+\frac{1}{2}(x(t)\cdot \nabla_{x}+y([t])\cdot \nabla_{y})^{2}f(t,\theta x(t)+\xi(t),\theta y([t])+\xi([t])) \ \ \ 
,
\end{aligned}
\end{equation}
where $\theta \in [0, 1)$, ``$\cdot$'' denotes the inner product, and ``$\nabla$'' is the Hamiltonian operator. \\  
Let $z(t) = y(t) - \xi(t)$. Then, from \eqref{eq:perturbadof} and \eqref{eq:generadorF}, we have that
\[
\begin{aligned}
\frac{dz}{dt}(t)=&~{}f(t,z(t)+\xi(t),z([t])+\xi([t]))-f(t,\xi(t),\xi([t]))\\
 & +\nu g\left(t,z(t)+\xi(t),z([t])+\xi([t])\right),
\end{aligned}
\]
by \eqref{calculo_aprox_f}, we have
\[
\begin{aligned}
\frac{dz}{dt}(t)=&\frac{\partial f}{\partial x}(t,\xi(t),\xi([t]))z(t)+\frac{\partial f}{\partial y}(t,\xi(t),\xi([t]))z([t])\\
 &+\frac{1}{2}(x(t)\cdot \nabla_{x}+y([t])\cdot \nabla_{y})^{2}f(t,\theta z(t)+\xi(t),\theta z([t])+\xi([t]))\\
 &+\nu g\left(t,z(t)+\xi(t),z([t])+\xi([t])\right).
\end{aligned}
\]
Let $A(t)=\frac{\partial f}{\partial x}(t,\xi(t),\xi([t])) $, $B(t)=\frac{\partial f}{\partial y}(t,\xi(t),\xi([t]))$ and 
\[
\begin{aligned}
F(t,z(t),z([t]))=&\frac{1}{2}(x(t)\cdot \nabla_{x}+y([t])\cdot \nabla_{y})^{2}f(t,\theta z(t)+\xi(t),\theta z([t])+\xi([t])),
\end{aligned}
\]
without loss of generality, we can assume that  
$\max \{ \Vert A \Vert_{\infty}, \Vert B \Vert_{\infty} \} \leq M$.  
We obtain the following system:
\[
\begin{aligned}
\frac{dz}{dt}(t)=A(t)z(t)+B(t)z([t])+F(t,z(t),z([t]))+\nu g\left(t,z(t)+\xi(t),z([t])+\xi([t])\right)
\end{aligned}
\]
By $(C_2)$, there are $r>0$ y $\nu^{0}$, $N_i(r)$, $i=1,2$ y $M_0(\nu_{0})$ such that
\[
\begin{aligned}
\left\Vert\frac{\partial^{2}f}{\partial x_{i}\partial x_{j}}(t,\theta z(t)+\xi(t),\theta z([t])+\xi([t]))\right\Vert\leq &~{} N_{1}(r)\\
\left\Vert\frac{\partial^{2}f}{\partial y_{i}\partial y_{j}}(t,\theta z(t)+\xi(t),\theta z([t])+\xi([t]))\right\Vert\leq &~{} N_{1}(r)\\
\left\Vert\frac{\partial^{2}f}{\partial x_{i}\partial y_{j}}(t,\theta z(t)+\xi(t),\theta z([t])+\xi([t]))\right\Vert\leq &~{} N_{1}(r), 
\end{aligned}
\]
for $i,j=1,2,\cdots,q,\ t\in\bb{R}$ y $\Vert z\Vert\leq r$ and 
\[
\begin{aligned}
\bigg\Vert \frac{\partial^{2}f}{\partial x_{i}\partial x_{j}}(t,\theta z(t)+\xi(t),\theta z([t])+\xi([t])) -\frac{\partial^{2}f}{\partial x_{i}\partial x_{j}} & (t,\theta\tilde{z}(t)+\xi(t),\theta\tilde{z}([t])+\xi([t]))\bigg\Vert \\
&\leq~{}2\theta N_{2}(r)\left\Vert z(t)-\tilde{z}(t)\right\Vert
\\
\bigg\Vert \frac{\partial^{2}f}{\partial y_{i}\partial y_{j}}(t,\theta z(t)+\xi(t),\theta z([t])+\xi([t]))-\frac{\partial^{2}f}{\partial y_{i}\partial y_{j}}&(t,\theta\tilde{z}(t)+\xi(t),\theta\tilde{z}([t])+\xi([t]))\bigg\Vert \\
&\leq ~{} 2\theta N_{2}(r)\big\Vert z(t)-\tilde{z}(t)\big\Vert
\\
\bigg\Vert\frac{\partial^{2}f}{\partial x_{i}\partial y_{j}}(t,\theta z(t)+\xi(t),\theta z([t])+\xi([t]))-\frac{\partial^{2}f}{\partial x_{i}\partial y_{j}}&(t,\theta\tilde{z}(t)+\xi(t),\theta\tilde{z}([t])+\xi([t]))\bigg\Vert 
\\
&\leq~{} 2\theta N_{2}(r)\left\Vert z(t)-\tilde{z}(t)\right\Vert
\end{aligned}
\]
where $i,j=1,2,\cdots,q,\ t\in\bb{R}$ and $\Vert z\Vert\leq r$, $N_2(r)$ is bounded and $N_1(r),M_0(\nu)$ can be chosen as non decreasing functions of $r$ and $\nu$ respectively. It follows
\begin{eqnarray*}
\Vert F(t,z(t),z([t]))\Vert \leq 3r^{2}N_1(r) \mbox{ for } \Vert z\Vert\leq r
\end{eqnarray*}
and
\begin{eqnarray*}
\Vert F(t,z(t),z([t]))-F(t,\tilde{z}(t),\tilde{z}([t]))\Vert \leq 6(qrN_1(r)+r^{2}N_2(r))\Vert z-\tilde{z}\Vert
\end{eqnarray*}
for $\Vert z\Vert, \Vert\tilde{z} \Vert\leq r$ and $t\in \bb{R}$.

Then, since the hypotheses of Theorem \ref{teo_perturbada_depca} are satisfied, the theorem follows.

\end{proof}

\section{Lasota-Wazewska Model}\label{sec7}

The Lasota-Wazewska Model is an autonomous differential equation of the form
\begin{eqnarray}
y'(t)=-\delta y(t)+pe^{-\gamma y(t-\tau )},\ t\geq 0. \label{lasota-wazewska}
\end{eqnarray}
It was used by Wazewska-Czyzewska and Lasota \cite{lasota-wazewska} to describe the survival of red blood cells in an animal's bloodstream.  
In this equation, $y(t)$ represents the number of red blood cells in the blood at time $t$,  
$\delta > 0$ is the probability of red blood cell death;  
$p, \gamma$ are positive constants related to the production of red blood cells per unit of time,  
and $\tau$ is the time required to produce red blood cells. \\  
Equation \eqref{lasota-wazewska} models various real-life situations; see \cite{liz-martinez}. \\  
In \cite{paper_lazota_depca}, a variation of this model is proposed by modifying the delay
\begin{eqnarray}
y'(t)=-\delta y(t)+pe^{-\gamma y([t])},\ t\geq 0. \label{lasota-wazewska_modificada}
\end{eqnarray}

We will study the following delayed model:  
\begin{eqnarray*}
y'(t) = -\delta(t)\, y(t) + p(t)\, f(y([t])), 
\end{eqnarray*}
where $\delta(\cdot)$, $p(\cdot)$ are positive remotely almost-periodic functions,  
and $f(\cdot)$ is a positive function that satisfies a $\gamma$-Lipschitz condition; that is, the following holds:
\begin{eqnarray*}
\vert f(x)-f(y) \vert \leq \gamma \vert x-y\vert,\ \ x,y\in \bb{R}_{+}.
\end{eqnarray*}

We will assume the following condition:  
\begin{enumerate}[label=\textbf{(LW.\arabic*)}]
\item\label{LW1} The average of $\delta$ satisfies $M(\delta) > \delta_{-} > 0$.
\item\label{LW2} The functions $\delta(\cdot)$, $p(\cdot)$ are positive remotely almost-periodic functions,  
and $f(\cdot)$ is a positive function that satisfies a $\gamma$-Lipschitz condition.
\end{enumerate}

The main result is the following theorem:

\begin{thm}\label{gamma_small_unique_sol}
If conditions \ref{LW1} and \ref{LW2} are satisfied, then for sufficiently small $\gamma$,  
the equation \eqref{lasota-wazewska_modificada} has a unique remotely almost-periodic solution.
\end{thm}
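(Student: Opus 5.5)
The plan is a contraction argument built on Theorem \ref{existencia_solucion_depca_rap}, applied to the scalar linear DEPCA
\[
x'(t)=-\delta(t)x(t)+h(t),\qquad h\in\mathbb{Z}RAP(\mathbb{R},\mathbb{R}).
\]
This is \eqref{DEPCA_lineal+f} with $q=1$, $A=-\delta$ and $B\equiv 0$. In that case $J\equiv 1$ and $Z(t,s)=\Phi(t,s)=\exp\bigl(-\int_s^t\delta\bigr)$. The associated discrete equation is therefore $x(n+1)=C(n)x(n)+h(n)$ with $C(n)=e^{-\int_n^{n+1}\delta}$.

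\textbf{Step 1: dichotomy of the discrete equation.} By \ref{LW1}, the products $\prod_{j=m}^{n-1}C(j)=e^{-\int_m^n\delta}$ decay like $K e^{-\alpha(n-m)}$ for some $0<\alpha<\delta_-$ and all $n\ge m$. This is obtained from the uniform-in-$m$ mean estimate $\frac1{L}\int_m^{m+L}\delta\ge \delta_-$ for large $L$, together with boundedness of $\delta$. Hence we get an $(\alpha,K,P)$-dichotomy with $P=I$, and $\tilde G(n,m)=e^{-\int_m^n\delta}$ for $n\ge m$, $0$ otherwise. Bi-remote almost periodicity and summability \eqref{bi_sumable} follow from the same argument as in Lemma \ref{coef_ecua_diferencia}. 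Choose $\tau\in T(\delta,\epsilon')\cap\mathbb{Z}$ (Lemma \ref{RAP_TcapZ_novacio}). Then $|\tilde G(n+\tau,k+\tau)-\tilde G(n,k)|\le K e^{-\alpha(n-k)}\int_k^n|\delta(u+\tau)-\delta(u)|du$. Split the sum over $k$ into $|k|$ large, where the integrand is $<\epsilon'$, and the finitely many remaining terms, which are killed by the exponential weight as $|n|\to\infty$; this is exactly the splitting used for $S_2$ in Theorem \ref{difference_sol_bounded}. I expect this step, turning a mean-value hypothesis on a merely remotely almost periodic $\delta$ into a uniform exponential bound, to be the main obstacle. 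If the mean is not uniform, I would fall back on reading \ref{LW1} as $\inf_m\frac1L\int_m^{m+L}\delta\ge\delta_->0$ for some $L$.

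\textbf{Step 2: fixed point.} Let $\mathcal B=\{\varphi\in RAP(\mathbb{R},\mathbb{R}):\varphi\ge0\}$ with the sup norm, or a closed ball of it. For $\varphi\in\mathcal B$, set $h_\varphi(t)=p(t)f(\varphi([t]))$. By the lemmas of Section \ref{sec2} (composition with the Lipschitz $f$, and $\varphi([\cdot])\in\mathbb{Z}RAP$), $h_\varphi\in\mathbb{Z}RAP$. It is bounded because $|f(x)|\le f(0)+\gamma|x|$. Theorem \ref{existencia_solucion_depca_rap} then gives a unique RAP solution $T\varphi$, and explicitly
\[
T\varphi(t)=\int_{-\infty}^t e^{-\int_s^t\delta}\,p(s)f(\varphi([s]))\,ds .
\]
This is positive since $p,f>0$, so $T$ maps $\mathcal B$ into itself. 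Moreover
\[
\|T\varphi-T\psi\|_\infty\le \|p\|_\infty\,\gamma\,\sup_t\int_{-\infty}^t e^{-\int_s^t\delta}ds\,\|\varphi-\psi\|_\infty\le \gamma\|p\|_\infty K'\|\varphi-\psi\|_\infty ,
\]
with $K'$ coming from Step 1.

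\textbf{Step 3: conclusion.} For $\gamma<(\|p\|_\infty K')^{-1}$, $T$ is a contraction on the complete set $\mathcal B$, which is closed in $RAP$. The Banach fixed point theorem gives a unique positive RAP fixed point, and this fixed point is a solution of the model. For uniqueness among all RAP solutions, note that any RAP solution $y$ is bounded. It therefore satisfies the integral representation above, because the homogeneous part $e^{-\int_s^t\delta}y(s)\to0$ as $s\to-\infty$. Hence $y$ is a fixed point of $T$, and positivity of $y$ is automatic.
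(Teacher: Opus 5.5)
Your argument is correct, given the reading of \ref{LW1} you already flagged. It takes a different route from the paper.

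\textbf{Comparison with the paper.} The paper never passes through the discrete equation. It writes the fixed-point map $\psi\mapsto y$ directly as $y(t)=\int_{-\infty}^t e^{-\int_u^t\delta}p(u)f(\psi([u]))\,du$. It then uses Lemma \ref{BIRAP_exp_neg} to get integrated bi-remote almost periodicity of the kernel $e^{-\int_u^t\delta}$, and invokes \cite[Lemma 4]{pinto_remotely} to conclude. The contraction and the role of $\gamma$ are left implicit, and the paper does not discuss that the forcing is only $\mathbb{Z}RAP$.

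You instead specialize the paper's own DEPCA machinery to $A=-\delta$, $B\equiv0$. You check the discrete hypothesis of Theorem \ref{existencia_solucion_depca_rap} by hand; your bound $|\tilde G(n+\tau,k+\tau)-\tilde G(n,k)|\le Ke^{-\alpha(n-k)}\int_k^n|\delta(u+\tau)-\delta(u)|\,du$ is the discrete counterpart of the estimate in Lemma \ref{BIRAP_exp_neg}. That theorem then absorbs the discontinuous forcing $p(t)f(\varphi([t]))$.

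Your route buys several things the paper does not write out:
\begin{itemize}
\item a proof that stays inside the paper;
\item an explicit threshold $\gamma\|p\|_\infty K'<1$;
\item a clean treatment of $f$ being defined only on $\mathbb{R}_+$, via the closed cone;
\item uniqueness among all RAP solutions, together with positivity.
\end{itemize}
The paper's route is shorter, and more natural for an equation whose linear part has no $x([t])$ term.

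\textbf{A small fix in Step 1.} The terms whose interval $[k,n]$ meets the bad region are not finitely many; they are all $k$ below the threshold. They are still negligible, because their total weight $\sum_{k\le N}e^{-\alpha(n-k)}$ tends to $0$ as $n\to+\infty$. The remaining terms carry a factor $(n-k)\epsilon'$, which is still summable against $e^{-\alpha(n-k)}$.

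\textbf{On \ref{LW1}.} Your fallback reading is genuinely needed, not optional. A RAP function need not have a mean. Even a positive slowly oscillating $\delta$ with mean $1$ can fail: let it equal $n^{-2}$ on windows of length $n$ placed at the points $n^3$, with linear transitions of length $n$. Then $\sup_t\int_{-\infty}^t e^{-\int_s^t\delta}\,ds=\infty$, and there is no dichotomy.

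The paper's proof relies on the same missing ingredient. It simply asserts that the homogeneous part has an exponential dichotomy. Lemma \ref{BIRAP_exp_neg} likewise derives $|e^{\int_s^t a}|\le Ke^{-\gamma(t-s)}$ from the mean condition alone. With $\inf_{s}\frac1L\int_s^{s+L}\delta\ge\delta_->0$ for some $L$, or simply $\inf\delta>0$, both arguments go through.
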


Before proving this theorem we need the following technical lemma

\begin{lem}\label{BIRAP_exp_neg}[Bi-Remotelly Integrable Almost Periodicity for an Ergodic function $a$]
If $a$ is a Remotelly Almost Periodic function with $\mathcal{M}(a)\neq 0$, then given $\epsilon>0$ there is $\epsilon'(\epsilon)=\epsilon'>0$, such that if  $\tau\in T(a,\epsilon')$  we have
\begin{eqnarray*}
\limsup_{\vert t \vert \rightarrow \infty} \int_{-\infty}^{t}\left\vert e^{\int_{s+\tau}^{t+\tau}a(r)dr}-e^{\int_{s}^{t}a(r)dr}\right\vert ds < \epsilon ,\ \ \mathcal{M}(a)<0.
\end{eqnarray*}
and
\begin{eqnarray*}
\limsup_{\vert t \vert \rightarrow \infty} \int^{\infty}_{t}\left\vert e^{\int_{s+\tau}^{t+\tau}a(r)dr}-e^{\int_{s}^{t}a(r)dr}\right\vert ds < \epsilon,\ \ , \mathcal{M}(a)>0.
\end{eqnarray*}
\end{lem}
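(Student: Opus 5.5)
We treat the case $\mathcal{M}(a)<0$; the case $\mathcal{M}(a)>0$ is symmetric, replacing $\int_{-\infty}^t$ by $\int_t^\infty$ and reversing the orientation of the exponent. The plan has three steps: obtain uniform exponential decay of $e^{\int_s^t a}$ and of its translate, split the integral into a near part and a tail, and control the near part through the remote translation property of $a$.

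\textbf{Step 1 (uniform exponential decay).} Since $a$ is RAP it is bounded, say $|a|\le M_a$. Because the mean $\mathcal{M}(a)=-2\beta<0$ exists uniformly for RAP functions, there is $L>0$ with
\[
\tfrac{1}{t-s}\int_s^t a(r)\,dr\le -\beta \quad\text{whenever } t-s\ge L, \text{ uniformly in } s.
\]
Hence $e^{\int_s^t a}\le e^{M_aL}e^{-\beta(t-s)}=:K_1e^{-\beta(t-s)}$ for all $s\le t$. The same bound holds for the translate $e^{\int_{s+\tau}^{t+\tau}a}$, with the same $K_1$ and $\beta$, since uniformity in $s$ is translation invariant. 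This is the step I expect to be the main obstacle, because it requires the uniform existence of the mean for RAP functions. I would obtain it by approximating $a$ uniformly by $g_1+\varphi_1+g_2\varphi_2$ as in Lemma \ref{RAP_TcapZ_novacio}. The AP parts have uniform means, and slowly oscillating functions are asymptotically locally constant, so their contribution must be handled with care. Alternatively, I would simply take this uniform decay as the content of the ergodic hypothesis on $a$.

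\textbf{Step 2 (split the integral).} Given $\epsilon>0$, choose $T_0$ so large that $2K_1\beta^{-1}e^{-\beta T_0}<\epsilon/2$. The integral over $s\le t-T_0$ is then bounded by the sum of the two exponential bounds from Step 1, hence by $\epsilon/2$, uniformly in $t$ and $\tau$.

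\textbf{Step 3 (near part).} For $s\in[t-T_0,t]$, apply $|e^x-e^y|\le e^{\max(x,y)}|x-y|$ with $x,y\le M_aT_0$ to get
\[
\left|e^{\int_{s+\tau}^{t+\tau}a}-e^{\int_s^t a}\right|\le e^{M_aT_0}\int_{t-T_0}^{t}|a(r+\tau)-a(r)|\,dr \le e^{M_aT_0}T_0\sup_{r\in[t-T_0,t]}|a(r+\tau)-a(r)|.
\]
Take $\epsilon'=\epsilon\,(2T_0^2e^{M_aT_0}+1)^{-1}$ and $\tau\in T(a,\epsilon')$. As $|t|\to\infty$ the window $[t-T_0,t]$ escapes to infinity, so the $\limsup$ of the supremum is less than $\epsilon'$. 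Integrating over $s$ contributes a further factor $T_0$, so the $\limsup$ of the near part is below $\epsilon/2$. Combining with Step 2 gives the claim.

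Note that $\epsilon'$ depends on $\epsilon$ only through $T_0$, which depends on $K_1$, $\beta$ and $M_a$; this is consistent with the form $\epsilon'(\epsilon)$ in the statement.
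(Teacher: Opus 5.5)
Your Step 1 is more than the hard step: the claim it rests on is false. RAP functions do not have uniform means, and the slowly oscillating part is exactly where uniformity fails, so the approximation by $g_1+\varphi_1+g_2\varphi_2$ cannot deliver the decay bound.

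Every slowly oscillating function is RAP, since its $T(a,\epsilon)$ is all of $\mathbb{R}$. For instance, $\sin\log(1+|t|)$ is slowly oscillating and has no mean at all.

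Even when the mean exists, the decay can fail. For each $n\ge 2$, let $\psi$ rise linearly from $0$ to $1$ on $[n^3-n,n^3]$, equal $1$ on $[n^3,n^3+n]$, and fall linearly back to $0$ on $[n^3+n,n^3+2n]$. Let $\psi$ vanish elsewhere on $[0,\infty)$ and extend it evenly. Then $a=-1+2\psi$ is Lipschitz and slowly oscillating, hence RAP, with $\mathcal{M}(a)=-1$. Yet $e^{\int_s^t a}=e^{t-s}$ on every plateau.

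Take $t_n=n^3+n$ and $\tau=1$, which lies in $T(a,\epsilon')$ for every $\epsilon'$. Then $\int_{s+1}^{t_n+1}a-\int_s^{t_n}a=-1/n$ for $s\in[n^3,t_n-1]$. So the integral in the lemma is at least $(1-e^{-1/n})(e^{n}-e)\to\infty$.

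Hence the lemma is false when $\mathcal{M}(a)$ is the ordinary mean. It needs $\frac1T\int_s^{s+T}a\to\mathcal{M}(a)$ uniformly in $s$, or directly the bound $e^{\int_s^t a}\le Ke^{-\gamma(t-s)}$ for $s\le t$. Your fallback of taking that bound as the content of the hypothesis is the right repair. It is also what the paper does tacitly: its proof opens with ``we know $|e^{\int_s^t a}|\le Ke^{-\gamma(t-s)}$'' and gives no argument. A minor point: from a uniform mean with threshold $L$, the constant is $K_1=e^{(M_a+\beta)L}$, not $e^{M_aL}$.

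Granting the bound, your Steps 2 and 3 are correct, and they follow a different route from the paper. The paper uses the decay of both exponentials on the whole half-line to get one pointwise estimate, $\bigl|e^{\int_{s+\tau}^{t+\tau}a}-e^{\int_s^t a}\bigr|\le K^2(t-s)e^{-\gamma(t-s)}\sup_{u\in(s,t)}|a(u+\tau)-a(u)|$. It then substitutes $x=t-s$ and moves $\limsup_t$ inside the integral by dominated convergence, with majorant $2\|a\|_\infty K^2xe^{-\gamma x}$. This gives an $\epsilon'$ linear in $\epsilon$, namely $\gamma^2\epsilon/K^2$; the printed $K^2\gamma^{-1}\epsilon$ is a slip.

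Your tail/window split avoids any limit interchange and is fully elementary. The cost is that your $\epsilon'$ depends on $\epsilon$ nonlinearly, through $T_0$ and the crude factor $e^{M_aT_0}$. If in Step 3 you keep $K_1(t-s)e^{-\beta(t-s)}$ instead of $e^{M_aT_0}T_0$, the window contributes less than $K_1\beta^{-2}\epsilon'$ in the limit. You then recover the paper's linear choice $\epsilon'=\beta^2\epsilon/(2K_1)$ without dominated convergence.
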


\begin{proof}
Let $\mathcal{M}(a)<-\gamma<0$, we know $\vert e^{\int_{s}^{t}a(r)dr}\vert\leq Ke^{-\gamma(t-s)}$ for $t\geq s$.
It is not hard to verify
\begin{eqnarray*}
\vert e^{\int_{s+\tau}^{t+\tau}a(r)dr}-e^{\int_{s}^{t}a(r)dr}\vert\leq K^{2} (t-s) e^{-\gamma(t-s)}\sup_{u\in (s,t)}\vert a(u+\tau)-a(u)\vert
\end{eqnarray*}
Then
\begin{eqnarray*}
\int_{-\infty}^{t} \vert e^{\int_{s+\tau}^{t+\tau}a(r)dr}-e^{\int_{s}^{t}a(r)dr}\vert
&\leq& \int_{-\infty}^{t} K^{2} (t-s) e^{-\gamma(t-s)}\sup_{u\in (s,t)}\vert a(u+\tau)-a(u)\vert ds\\
&\leq & \int_{0}^{\infty} K^{2} e^{-\gamma x}x\sup_{u\in (t,t+x)}\vert a(u-x+\tau)-a(u-x)\vert ds.
\end{eqnarray*}
Given $\epsilon>0$ consider $\epsilon'=K^{2}\gamma^{-1}\epsilon$. 
Now consider $t\rightarrow\infty$ and let $\{ x_n\}_{n\in\bb{Z}}$ be any sequence suche that $x_n\rightarrow\infty$ when $n\rightarrow\infty$. We have
$$\lim_{t\rightarrow\infty} \sup_{u\in(t,\infty)}\vert a(u-x+\tau)-a(u-x)\vert =\lim_{n\rightarrow\infty} \sup_{u\in(x_n,\infty)}\vert a(u-x+\tau)-a(u-x)\vert.$$
Therefore, by Lebesgue's Dominated Convergence Theorem we get
\begin{eqnarray*}
\lim_{t\rightarrow\infty}\int_{0}^{\infty}e^{-\gamma x}x\sup_{u\in(t,\infty)}\vert \Delta_{\tau} a(u-x)\vert dx 
	&=&	\int_{0}^{\infty}e^{-\gamma x}x\lim_{t\rightarrow\infty}\sup_{u\in(t,\infty)}\vert \Delta_{\tau} a(u-x)\vert dx.
\end{eqnarray*}
Hence
\begin{eqnarray*}
\limsup_{t\rightarrow\infty}
\int_{-\infty}^{t} \vert e^{\int_{s+\tau}^{t+\tau}a(r)dr}-e^{\int_{s}^{t}a(r)dr}\vert ds
&\leq & \limsup_{t\rightarrow\infty} \int_{0}^{\infty} K^{2} e^{-\gamma x}x\sup_{u\in (t,\infty)}\vert \Delta_{\tau} a(u-x)\vert dx\\
&\leq & \epsilon.
\end{eqnarray*}
The case $t\rightarrow -\infty$ is analogous.

The case $\mathcal{M}(a)>0$ concludes in the same way, this finishes the proof.
\end{proof}

We are now ready to prove the aforementioned result. 

\begin{proof}[Proof Theorem \ref{gamma_small_unique_sol}]
Let $\psi$ be a real remotely almost-periodic function, and consider the equation  
\begin{eqnarray}
y'(t) = -\delta(t)\, y(t) + p(t)\, f(\psi([t])). \label{lasota-psi}
\end{eqnarray}

Then, the unique bounded solution of \eqref{lasota-psi} satisfies

\begin{eqnarray*}
y(t)=\int_{-\infty}^{t} e^{ -\int_{u}^{t}\delta (s)ds}  p(u)f(\psi([u]))du.
\end{eqnarray*}

The homogeneous part of equation \eqref{lasota-psi} admits an exponential dichotomy.  
Therefore, by Lemma \ref{BIRAP_exp_neg}, we know that $t\mapsto e^{-\int_{u}^{t} \delta(s)\, ds}$ is bi-remotely almost periodic.  
Then, by applying \cite[Lemma 4]{pinto_remotely}, we conclude the result.
\end{proof}

Taking $f\in BUC(\mathbb{R},\mathbb{R})$ given by $f(x)=e^{ -\gamma x}$, with $\gamma>0$, we have the Lasota-Wazewska Model
\begin{eqnarray}
y'(t)=-\delta(t) y(t)+p(t)e^{-\gamma y([t])},\ t\geq 0. \label{application_depca}
\end{eqnarray}

So we have.

\begin{cor}
If \ref{LW1} and \ref{LW2} are satisfied, then for $\gamma$ small enough, the Lasota-Wazewska model with a piecewise constant argument \eqref{application_depca} has a unique remotely almost periodic solution.
\end{cor}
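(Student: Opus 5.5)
The corollary is the special case $f(x)=e^{-\gamma x}$ of Theorem \ref{gamma_small_unique_sol}. So the proof should reduce to checking that this $f$ satisfies \ref{LW2}, and then running the fixed-point argument that underlies that theorem.

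\emph{Hypotheses.} The function $f(x)=e^{-\gamma x}$ is positive. On $\mathbb{R}_+$ its derivative satisfies $|f'(x)|=\gamma e^{-\gamma x}\le\gamma$, so $f$ is $\gamma$-Lipschitz there, and it is bounded by $1$ on $\mathbb{R}_+$. Together with the standing assumptions on $\delta$ and $p$, this gives \ref{LW2}. Condition \ref{LW1} is assumed directly.

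\emph{Fixed-point argument.} I work on the closed convex set
\[
\mathcal{K}=\{\psi\in RAP(\mathbb{R},\mathbb{R}):\ \psi\ge 0\},
\]
which is complete as a closed subset of the Banach space $RAP$. For $\psi\in\mathcal{K}$, equation \eqref{lasota-psi} has the bounded solution
\[
(T\psi)(t)=\int_{-\infty}^{t}e^{-\int_u^t\delta(s)\,ds}\,p(u)\,e^{-\gamma\psi([u])}\,du .
\]

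\begin{enumerate}[label=(\roman*)]
\item \emph{Well-defined and bounded.} By \ref{LW1} the mean of $-\delta$ is negative, so the kernel is bounded by $Ke^{-\delta_-(t-u)}$. Hence $T\psi$ is bounded, and it is positive because $p>0$.
\item \emph{$T\psi$ is remotely almost periodic.} The forcing $p(\cdot)e^{-\gamma\psi([\cdot])}$ is $\mathbb{Z}RAP$ by the earlier lemmas on compositions with $[\cdot]$. The kernel is bi-remotely integrable by Lemma \ref{BIRAP_exp_neg}. Applying \cite[Lemma 4]{pinto_remotely}, in the same way as in the proof of Theorem \ref{gamma_small_unique_sol}, shows that $T\psi$ is $\mathbb{Z}RAP$. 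By Lemma \ref{sol_DEPCA_uniform_continua}, $T\psi$ is Lipschitz, and then Lemma \ref{ZRAP_unif_conti_RAP} gives $T\psi\in RAP$. Thus $T$ maps $\mathcal{K}$ into itself.
\item \emph{Contraction.} The Lipschitz bound gives
\[
\|T\psi_1-T\psi_2\|_\infty\le \gamma\,\|p\|_\infty\sup_t\int_{-\infty}^t Ke^{-\delta_-(t-u)}\,du\,\|\psi_1-\psi_2\|_\infty=\frac{\gamma K\|p\|_\infty}{\delta_-}\,\|\psi_1-\psi_2\|_\infty .
\]
Hence $T$ is a contraction once $\gamma<\delta_-/(K\|p\|_\infty)$.
\end{enumerate}

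By the Banach fixed point theorem, $T$ has a unique fixed point in $\mathcal{K}$, and this is the unique (positive) remotely almost periodic solution of \eqref{application_depca}.

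\emph{Main obstacle.} The delicate step is turning the mean condition \ref{LW1} into the uniform estimate $e^{-\int_u^t\delta}\le Ke^{-\delta_-(t-u)}$ for a remotely almost periodic $\delta$. This is what makes the constant $K$ above meaningful. I would justify it as in the proof of Lemma \ref{BIRAP_exp_neg}, which assumes this dichotomy-type bound from $\mathcal{M}(a)<0$.

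A secondary point is that uniqueness is among \emph{nonnegative} solutions, since $f$ is Lipschitz only on $\mathbb{R}_+$. The image of $T$ is positive, so this restriction is consistent. To pass to uniqueness among all remotely almost periodic solutions, I would observe that any bounded solution must equal $T\psi$ with $\psi$ equal to itself, and is therefore positive.
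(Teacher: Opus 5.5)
Your proposal is correct relative to the paper and follows the same route: the corollary is Theorem~\ref{gamma_small_unique_sol} with $f(x)=e^{-\gamma x}$. That theorem is proved by freezing $\psi$, writing the bounded solution as $\int_{-\infty}^t e^{-\int_u^t\delta}p(u)f(\psi([u]))\,du$, and invoking Lemma~\ref{BIRAP_exp_neg} together with \cite[Lemma 4]{pinto_remotely}; you also make explicit the contraction constant $\gamma K\|p\|_\infty/\delta_-$ and the positivity/uniqueness argument, which the paper leaves implicit.

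As you noticed, both you and the paper take $e^{-\int_u^t\delta}\le Ke^{-\delta_-(t-u)}$ for granted from \ref{LW1}. This bound holds when the mean of $\delta$ is uniform in the Bohr sense, but it does not follow from a plain Ces\`aro mean condition for a remotely almost periodic $\delta$. For example, $\delta(t)=1+\sin\sqrt{|t|}+(1+t^2)^{-1}$ has mean $1$ yet has arbitrarily long stretches on which $\int\delta$ stays bounded, so \ref{LW1} must be read in the stronger, uniform sense.
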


\printbibliography[heading=bibintoc,title={Bibliography}]

\end{document}